\newtheorem{theorem}{Theorem}[section]
\newtheorem{lemma}[theorem]{Lemma}
\newtheorem{definition}[theorem]{Definition}
\newtheorem{proposition}[theorem]{Proposition}  %added
\newtheorem{corollary}[theorem]{Corollary}  %added
\newtheorem{conj}[theorem]{Conjecture}  %added
\newtheorem{question}[theorem]{Question}  %added
\newtheorem{remark}[theorem]{Remark}
\renewcommand{\subset}{\subseteq}
\renewcommand{\epsilon}{\varepsilon}
\renewcommand{\nu}{v}
\def\eps{{\varepsilon}}
\newcommand{\ignore}[1]{}
\newcommand{\abs}[1]{\left|#1\right|}                   % Absolute value notation
\newcommand{\absf}[1]{|#1|}                             % small absolute value signs
\newcommand{\vnorm}[1]{\left\|#1\right\|}    % norm notation
\newcommand{\vnormf}[1]{\|#1\|}                         % norm notation, forced to be small
\newcommand{\sign}[1]{\mbox{sign}#1}
\newcommand{\Z}{\mathbb{Z}}                             % Blackboard notation
\newcommand{\N}{\mathbb{N}}
\newcommand{\E}{\mathbb{E}}
\renewcommand{\P}{\mathbb{P}}
\newcommand{\R}{\mathbb{R}}
\newcommand{\Q}{\mathbb{Q}}
\newcommand{\cH}{\mathcal{H}}
\newcommand{\cC}{\mathcal{C}}
\newcommand{\vp}{\varphi}
\newcommand{\tOmega}{{\tilde{\Omega}}}
\newcommand{\tmu}{{\tilde{\mu}}}
\newcommand{\lj}{{L^{1}(\Omega,\mu)}}
\newcommand{\ld}{{L^{2}(\Omega,\mu)}}
\newcommand{\lp}{{L^{p}(\Omega,\mu)}}
\newcommand{\li}{{L^{\infty}(\Omega,\mu)}}
\newcommand{\ltj}{{L^{1}(\tOmega,\tmu)}}
\newcommand{\ltd}{{L^{2}(\tOmega,\tmu)}}
\newcommand{\ltp}{{L^{p}(\tOmega,\tmu)}}
\newcommand{\lti}{{L^{\infty}(\tOmega,\tmu)}}
\newcommand{\1}{{\bf{1}}}
\newcommand{\embolden}[1]{{#1}}
\renewcommand{\sign}[1]{\mathrm{sign}#1}
\numberwithin{equation}{section}
\begin{document}

% \title[short text for running head]{full title}
\title{Strong Contraction and Influences in Tail Spaces}

%    Only \author and \address are required; other information is
%    optional.  Remove any unused author tags.

%    author one information
% \author[short version for running head]{name for top of paper}
\author[S. Heilman]{Steven Heilman}
\address{UCLA Department of Mathematics, Los Angeles, CA 90095-1555}
\curraddr{}
\email{heilman@cims.nyu.edu}
\thanks{S.\,H. was supported by NSF Graduate Research Fellowship DGE-0813964 and a Simons-Berkeley Research Fellowship.  Part of this work was completed while S.\,H. was visiting the Network Science and Graph Algorithms program at ICERM}

%    author two information
\author{Elchanan Mossel}
\address{Department of Statistics, University of Pennsylvania, Philadelphia, PA 19104 and
Departments of Statistics and Computer Science, U.C. Berkeley, Berkeley CA 94720}
\email{mossel@wharton.upenn.edu}
\curraddr{}
\thanks{E.\,M. was supported by NSF grant DMS-1106999, NSF Grant CCF 1320105 and DOD ONR grant N000141110140 and grant 328025 from the Simons foundation}

\author{Krzysztof Oleszkiewicz}
\address{Institute of Mathematics, University of Warsaw, Banacha 2, 02-097 Warszawa, Poland}
\email{koles@mimuw.edu.pl}
\thanks{K.\,O. was supported by NCN grant DEC-2012/05/B/ST1/00412.
Part of this work was carried out while the authors were visiting the Real Analysis in Computer Science program
at the Simons Institute for the Theory of Computing.}

%    \subjclass is required.

\subjclass[2010]{60E15, 47D07, 06E30}

%\keywords{Strong contraction, influences, tail spaces, Kahn-Kalai-Linial, semi-group, Poincare inequality, Talagrand inequality, Harper inequality}

\date{}

\dedicatory{}

%    Abstract is required.
\begin{abstract}
We study contraction under a Markov semi-group and influence bounds for functions in $L^2$ tail spaces, i.e. functions all of whose low level Fourier coefficients vanish. It is natural to expect that certain analytic inequalities are stronger for such functions than for general functions in $L^2$. In the positive direction we prove an $L^{p}$ Poincar\'{e} inequality and moment decay estimates for mean $0$ functions and for all $1<p<\infty$, proving the degree one case of a conjecture of Mendel and Naor as well as the general degree case of the conjecture when restricted to Boolean functions.
%The original motivation for these Poincar\'{e} inequalities was a simplified construction of expander graphs which have a spectral gap with respect to any uniformly convex space.
In the negative direction, we answer negatively two questions of Hatami and Kalai concerning extensions of the Kahn-Kalai-Linial and Harper Theorems to tail spaces. That is, we construct a function $f\colon\{-1,1\}^{n}\to\{-1,1\}$ whose Fourier coefficients vanish up to level $c \log n$, with all influences bounded by $C \log n/n$ for some constants $0<c,C< \infty$.  We also construct a function $f\colon\{-1,1\}^{n}\to\{0,1\}$ with nonzero mean whose remaining Fourier coefficients vanish up to level $c' \log n$, with the sum of the influences bounded by $C'(\mathbb{E}f)\log(1/\mathbb{E}f)$ for some constants $0<c',C'<\infty$.
\end{abstract}

\maketitle

\section{Introduction}

%*** changed X to R
Consider the uniform measure on $\{-1,1\}^n$. Any $f\colon\{-1,1\}^{n}\to \R$ can be written as $f=\sum_{S\subset\{1,\ldots,n\}}\widehat{f}(S)W_{S}$, where for all $x=(x_{1},\ldots,x_{n})\in\{-1,1\}^{n}$, $W_{S}(x)\colonequals\prod_{i\in S}x_{i}$ and $\widehat{f}(S)\colonequals 2^{-n}\sum_{x\in\{-1,1\}^{n}}f(x)W_{S}(x)$.  For any $t\geq0$, define $P_{t}f\colonequals\sum_{S\subset\{1,\ldots,n\}}e^{-t\abs{S}}\widehat{f}(S)W_{S}$, and define $Lf\colonequals\sum_{S\subset\{1,\ldots,n\}}\abs{S}\widehat{f}(S)W_{S}$.

Our interest in this paper is in tail spaces. For the case of the uniform measure on $\{-1,1\}^n$, we are interested in the linear subspace of all functions satisfying $\widehat{f}(S) = 0$ for all $S\subset\{1,\ldots,n\}$ with $|S| \leq k$. Our interest in understanding such functions follows recent conjectures by Mendel and Naor and by Hatami and Kalai.

\subsection{Heat Smoothing}

%*** Changed "general" to  "a general"
In their study of a general notion of expander (with respect to all uniformly convex spaces), Mendel and Naor made the following conjecture:

\begin{conj}[\embolden{Heat Smoothing}]\label{conj1}\cite[Remark 5.5]{mendel12}
Let $1<p<\infty$.  Let\\ $f\colon\{-1,1\}^{n}\to\R$ with $\E f W_{S}=0$ for all $S\subset\{1,\ldots,n\}$ with $\abs{S}<k$.  Then
\begin{equation}\label{zero0}
\forall t>0,\quad\vnormf{P_{t}f}_{p}\leq e^{-tkc(p)}\vnorm{f}_{p}.
\end{equation}
%\begin{equation}\label{zero0.1}
%\vnorm{Lf}_{p}\geq c(p)k\vnorm{f}_{p}.
%\end{equation}
\end{conj}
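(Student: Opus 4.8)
\noindent\textit{Proof plan.} Since the full conjecture appears to be open, the goal is to prove the two cases that this paper actually establishes: the general‑degree case for Boolean $f$, and the degree‑one case ($k=1$) for arbitrary mean‑zero $f$.

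\smallskip

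\noindent\textit{Boolean case.} If $f\colon\{-1,1\}^n\to\{-1,1\}$ then $\vnormf f_p=1$ for every $p$. Since $P_t$ is an averaging operator ($P_tf(x)=\E f(y)$, where each $y_i$ independently equals $x_i$ with probability $(1+e^{-t})/2$ and $-x_i$ otherwise), $\vnormf{P_tf}_\infty\le\vnormf f_\infty=1$. For $p\ge2$, pointwise $\absf{P_tf}^p\le\vnormf{P_tf}_\infty^{\,p-2}\absf{P_tf}^2$, so $\vnormf{P_tf}_p^p\le\vnormf{P_tf}_2^2$, and since $\widehat f(S)=0$ for $\absf S<k$, Parseval gives $\vnormf{P_tf}_2^2=\sum_{\absf S\ge k}e^{-2t\absf S}\widehat f(S)^2\le e^{-2tk}$; hence $\vnormf{P_tf}_p\le e^{-2tk/p}$. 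For $1<p\le2$ one has instead $\vnormf{P_tf}_p\le\vnormf{P_tf}_2\le e^{-tk}$. So \eqref{zero0} holds for all $k$ with $c(p)=\min(1,2/p)$; the whole point is that the bound $\vnormf{P_tf}_\infty\le\vnormf f_p$ is available only for bounded $f$, which is why the general (unbounded) case is genuinely harder.

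\smallskip

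\noindent\textit{Degree‑one case.} Here I would first reduce \eqref{zero0} with $k=1$ to an infinitesimal inequality. As $t\mapsto P_tf(x)$ is a finite sum of exponentials and $s\mapsto\absf s^p$ is $C^1$ for $p>1$, the function $\varphi(t)\colonequals\vnormf{P_tf}_p^p$ is differentiable with $\varphi'(t)=-p\,\E\big[\absf{P_tf}^{\,p-2}(P_tf)\,LP_tf\big]$ (for $1<p<2$ differentiate $\E[((P_tf)^2+\delta)^{p/2}]$ and let $\delta\downarrow0$). Since $P_tf$ is again mean‑zero, it therefore suffices to prove the $L^p$ \emph{Poincar\'e inequality}
\[
\E\big[\absf g^{\,p-2}g\cdot Lg\big]\ \ge\ c(p)\,\vnormf g_p^p\qquad\text{for all mean-zero }g\colon\{-1,1\}^n\to\R ,
\]
because then $\varphi'\le-p\,c(p)\,\varphi$, and Gr\"onwall gives $\varphi(t)\le e^{-p c(p)t}\varphi(0)$, i.e.\ \eqref{zero0}. (For $p=2$ this is $\sum_S\absf S\,\widehat g(S)^2\ge\sum_S\widehat g(S)^2$, so $c(2)=1$.)

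To prove the $L^p$ Poincar\'e inequality, tensorize $L=\sum_{i=1}^nL_i$, $L_ig=\tfrac12(g-g\circ\sigma_i)$; writing $g=a_i+b_ix_i$ with $a_i,b_i$ functions of the other coordinates and $\phi_p(s)\colonequals\absf s^{\,p-2}s$, a short computation gives $\E[\absf g^{\,p-2}g\cdot L_ig]=\E[\tfrac{b_i}{2}(\phi_p(a_i+b_i)-\phi_p(a_i-b_i))]$, which is nonnegative. Feeding in the elementary convexity estimate $(\phi_p(u)-\phi_p(v))(u-v)\gtrsim_p\absf{u-v}^2(\absf u+\absf v)^{\,p-2}$ reduces the inequality to the ``weighted edge‑gradient'' bound
\[
\sum_{i=1}^n\E\big[(L_ig)^2\,(\absf g+\absf{g\circ\sigma_i})^{\,p-2}\big]\ \gtrsim_p\ \vnormf g_p^p\qquad\text{for mean-zero }g .
\]
\textbf{This last estimate is the main obstacle.} For $p=2$ it is Efron--Stein/Parseval, but for $p\neq2$ a single coordinate may contribute essentially nothing to the left‑hand side (take $g$ that barely depends on coordinate $i$), so the mean‑zero hypothesis has to be used globally. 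I expect the right route is an induction on $n$: condition on one coordinate, split off the equal‑and‑opposite conditional means of the two restrictions $g(\cdot,\pm1)$, use the sharp one‑dimensional estimate in that coordinate, and feed the two $(n-1)$‑variable restrictions into the inductive hypothesis; the delicate point is formulating the correct ``$L^p$‑variance'' functional so that the induction closes in spite of the non‑tensorization of $L^p$ norms when $p\neq2$. A softer alternative, yielding \eqref{zero0} only with a non‑optimal multiplicative constant $C_p>1$, is to decompose $f=\sum_i x_i e_i(x_1,\dots,x_{i-1})$ into martingale differences, note $P_tf=e^{-t}\sum_i x_i(P_te_i)$, and combine Burkholder's square‑function inequality with the contractivity of $P_t$ (an average of measure‑preserving substitutions) on $L^p(\{-1,1\}^n;\ell^2)$ to obtain $\vnormf{P_tf}_p\le C_p e^{-t}\vnormf f_p$; removing $C_p$ then again comes down to the infinitesimal inequality above, or to log‑convexity of $t\mapsto\log\vnormf{P_tf}_p$ on the mean‑zero subspace (which holds for $p=2$ by the spectral theorem).
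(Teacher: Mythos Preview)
Your Boolean case is correct and matches the paper's proof of Theorem~\ref{thm13} (your $p<2$ branch is in fact cleaner: for $\{-1,1\}$-valued $f$ one has $\|f\|_2=\|f\|_p=1$, so $\|P_tf\|_p\le\|P_tf\|_2\le e^{-tk}$ directly).

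For $k=1$ you correctly reduce \eqref{zero0} to the $L^p$ Poincar\'e inequality $\E\bigl[\,|g|^{p-1}\sign(g)\,Lg\,\bigr]\ge c(p)\,\E|g|^p$ for mean-zero $g$, and your Gr\"onwall step is exactly how the paper passes from Theorem~\ref{main2} to Theorem~\ref{main}. But the proof of that Poincar\'e inequality is where the content lies, and here you have a genuine gap: the weighted edge-gradient bound you isolate is itself the hard part, the induction on $n$ is not carried out (and it is unclear what functional would make it close), and the Burkholder alternative, as you yourself note, leaves a prefactor $C_p>1$ that the semigroup property cannot remove for small $t$.

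The paper's key idea, which you are missing, is to replace your pointwise estimate $(\phi_{p-1}(u)-\phi_{p-1}(v))(u-v)\gtrsim_p|u-v|^2(|u|+|v|)^{p-2}$ by the Stroock--Varopoulos inequality (here $\phi_s(x)=\sign(x)|x|^s$)
\[
(\phi_{p-1}(u)-\phi_{p-1}(v))(u-v)\ \ge\ \frac{4(p-1)}{p^2}\,\bigl(\phi_{p/2}(u)-\phi_{p/2}(v)\bigr)^2 .
\]
Summing over edges turns $\E[\phi_{p-1}(g)Lg]$ into $\ge\tfrac{4(p-1)}{p^2}\,\E[hLh]$ with $h=\phi_{p/2}(g)$. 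The right side is now the ordinary \emph{$L^2$} Dirichlet form of a single scalar function, so the $L^2$ spectral gap gives $\E hLh\ge\E h^2-(\E h)^2=\E|g|^p-(\E\phi_{p/2}(g))^2$. The entire difficulty is thereby reduced to a one-variable lemma having nothing to do with the cube or with $L$: for any mean-zero real random variable $X$,
\[
\bigl(\E X_+^{\,p/2}-\E X_-^{\,p/2}\bigr)^2\ \le\ \Bigl(1-\frac{p^2}{2(p^2-2p+2)}\Bigr)\,\E|X|^p ,
\]
which the paper proves (Lemmas~\ref{lemma10}--\ref{lemma12}) by H\"older/Jensen using only $\E X_+=\E X_-$. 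Combining yields $c(p)=\tfrac{2(p-1)}{p^2-2p+2}$. The point is that passing through $\phi_{p/2}(g)$ lets you borrow the $L^2$ gap wholesale; the mean-zero hypothesis on $g$ is then spent entirely on controlling the scalar $(\E\phi_{p/2}(g))^2$, with no induction or cube structure needed.
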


In our main result we prove a special case of their conjecture for $k=1$:
\begin{theorem}[\embolden{Heat Smoothing}]\label{main_boolean}
For every $p \in (1,\infty)$ and every $f: \{-1,1\}^n \rightarrow \R$ with $\E f=0$, for every $t>0$,
$$
\vnormf{P_{t}f}_{p} \leq \exp\left(-\frac{(2p-2)t}{(p^{2}-2p+2)}\right)\cdot \|f\|_{p}.
$$
\end{theorem}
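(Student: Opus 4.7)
The plan is to reduce the theorem to an $L^p$ Poincar\'{e}-type inequality for the Dirichlet form and then deduce the decay by integrating a differential inequality. Set $h(t) := \|P_t f\|_p^p$; using $\partial_t P_t f = -LP_t f$ and differentiating under the expectation one finds
\[ h'(t) \;=\; -p\,\langle L P_t f,\, |P_t f|^{p-2} P_t f\rangle \;=:\; -p\,\mathcal{E}_p(P_t f). \]
Because $P_t f$ remains mean zero whenever $f$ is, the theorem reduces via Gr\"{o}nwall's inequality to establishing the $L^p$ Poincar\'{e} inequality
\[ \mathcal{E}_p(g) \;\geq\; \frac{2(p-1)}{p^2 - 2p + 2}\,\|g\|_p^p \]
for every mean-zero $g \colon \{-1,1\}^n \to \R$.

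My approach would combine the edge representation
\[ \mathcal{E}_p(g) \;=\; \frac{1}{4}\sum_{i=1}^n \E\bigl[(g(x) - g(x^{(i)}))(\psi_p(g(x)) - \psi_p(g(x^{(i)})))\bigr], \qquad \psi_p(u) := |u|^{p-2}u, \]
where $x^{(i)}$ denotes $x$ with its $i$-th coordinate flipped, with the classical pointwise gradient inequality
\[ (a-b)\bigl(\psi_p(a) - \psi_p(b)\bigr) \;\geq\; \tfrac{4(p-1)}{p^2}\bigl(\phi_p(a) - \phi_p(b)\bigr)^2, \qquad \phi_p(u) := \sign(u)\,|u|^{p/2}, \]
valid for every $p > 1$ and all $a,b \in \R$ (tight to first order at $a \approx b$). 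Summing over edges yields $\mathcal{E}_p(g) \geq \tfrac{4(p-1)}{p^2}\,\mathcal{E}_2(G)$, with $G := \phi_p(g)$ satisfying $\|G\|_2^2 = \|g\|_p^p$. The standard $L^2$ Poincar\'{e} inequality $\mathcal{E}_2(G) \geq \|G\|_2^2 - (\E G)^2$ then reduces the problem to the sharp quantitative mean bound
\[ (\E\phi_p(g))^2 \;\leq\; \frac{(p-2)^2}{2(p^2-2p+2)}\,\|g\|_p^p \qquad \text{whenever } \E g = 0. \]

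The principal obstacle is this last inequality: the mean-zero constraint on $g$ must force a quantitative drop of $|\E\phi_p(g)|$ below the naive Cauchy--Schwarz bound $\E|g|^{p/2} \leq \|g\|_p^{p/2}$. I would attack this by a variational argument, maximizing $|\E\phi_p(g)|$ over all distributions of $g$ with $\E g = 0$ and $\|g\|_p^p$ fixed. Writing $g = g_+ - g_-$ with $g_\pm \geq 0$ on disjoint supports and $\E g_+ = \E g_-$, a convexity analysis should reduce the extremal problem to distributions supported on essentially two points, whereupon an explicit one-variable computation delivers the constant. If the basic gradient inequality does not already yield the sharp exponent $(2p-2)/(p^2-2p+2)$ when combined with this variational bound, a more refined weighted or localized gradient estimate that incorporates the mean-zero constraint from the outset -- rather than only at the final step via the $L^2$ Poincar\'{e} inequality for $G$ -- will be needed to pin down the precise constant.
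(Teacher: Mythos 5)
Your architecture is exactly the paper's: differentiating $\E|P_tf|^p$ and using that $P_tf$ stays mean zero reduces the theorem to the $L^p$ Poincar\'e inequality of Theorem \ref{main2}; the Dirichlet form is bounded below pointwise by the Stroock--Varopoulos inequality (the paper's Lemma \ref{lemma13}, your ``gradient inequality''); the ordinary $L^2$ Poincar\'e inequality is applied to $G=\phi_{p/2}(g)$; and everything then hinges on the mean-zero moment bound $\left(\E\phi_{p/2}(g)\right)^2\le\frac{(p-2)^2}{2(p^2-2p+2)}\,\E|g|^p$, which is precisely the paper's Lemma \ref{lemma12}. Your final hedge is unnecessary on the arithmetic side: with that constant, $\frac{4(p-1)}{p^2}\left(1-\frac{(p-2)^2}{2(p^2-2p+2)}\right)=\frac{2(p-1)}{p^2-2p+2}$, so the chain does deliver the stated exponent.

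The genuine gap is that you do not prove this moment bound, and it is the heart of the matter --- the part of the argument that actually exploits $\E g=0$. You propose a variational scheme (reduce by convexity to two-point laws, then a one-variable computation) but leave both steps unexecuted, and you explicitly allow that the route might not give the needed constant. The paper fills exactly this hole with two elementary but nontrivial lemmas: Lemma \ref{lemma10} uses $\E X_+=\E X_-=\tfrac12\E|X|$ together with Jensen's inequality (for $p>2$) or H\"older's inequality (for $1<p<2$) applied to $X_+$ and $X_-$ separately, yielding $(\E X_+^{p/2})^2+(\E X_-^{p/2})^2+(\E X_+^{p/2})^{-\frac{2}{p-2}}(\E X_-^{p/2})^{\frac{2p-2}{p-2}}+(\E X_-^{p/2})^{-\frac{2}{p-2}}(\E X_+^{p/2})^{\frac{2p-2}{p-2}}\le\E|X|^p$; and Lemma \ref{lemma11} is the two-variable inequality $(a-b)^2\le\frac{(p-2)^2}{2(p^2-2p+2)}\left(a^2+b^2+a^{1-s}b^{1+s}+b^{1-s}a^{1+s}\right)$ with $s=\frac{p}{p-2}$, proved by a power-series comparison of $e^{sx}\pm e^{-sx}$. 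Your two-point intuition is in fact correct --- Corollary \ref{cor4} shows the extremizers are two-point distributions and identifies the optimal constant $1/(\kappa(p)^2+1)$ --- but justifying the reduction to two-point laws and then verifying $\kappa(p)\ge\left|\frac{p}{p-2}\right|$ is precisely the remaining work, so as written the proposal is an accurate outline of the paper's strategy with its key quantitative lemma missing.
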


%*** changed 'this proof of the' to 'the proof of this'
%*** added $L_{2}$ to the poincare inequality, for clarity
The proof of this theorem covers all Markov operators satisfying an $L^{2}$ Poincar\'e inequality. We also show that if we restrict to $\{-1,0,1\}$-valued functions, then \eqref{zero0} always holds.

\begin{theorem}[\embolden{Conjecture \ref{conj1} for $\{-1,0,1\}$-valued functions}]\label{thm13}
Let $1<p<\infty$ and let $f\colon\{-1,1\}^{n}\to\{-1,0,1\}$ with $\E f W_{S}=0$ for all $S\subset\{1,\ldots,n\}$ with $\abs{S}<k$.  Then for all $t>0$,
\begin{equation}\label{one3}
%***STRENGTHENED "\vnormf{P_{t}f}_{p}\leq e^{-tk\min\left(\frac{2(p-1)}{p},\frac{1}{p-1}\right)}\vnorm{f}_{p}." to
\vnormf{P_{t}f}_{p}\leq e^{-2tk\min\left(\frac{p-1}{p},\frac{1}{p}\right)}\vnorm{f}_{p}.
\end{equation}
\end{theorem}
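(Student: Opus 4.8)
The plan is to sidestep the $L^p$ heat‑flow estimates behind Theorem~\ref{main_boolean} entirely, and instead reduce the whole statement to the elementary $L^2$ behaviour of $P_t$ on the tail space, exploiting the rigidity of $\{-1,0,1\}$‑valued functions. The one observation that makes this work is that for such an $f$ and \emph{every} exponent $r>0$ one has the pointwise identity $|f(x)|^r=\mathbf{1}_{\{f(x)\ne 0\}}$, so that $\|f\|_r=\mu^{1/r}$ with $\mu\colonequals\P(f\ne 0)$; in particular $\|f\|_1=\|f\|_2^2=\mu$ and $\|f\|_\infty=1$ (the case $f\equiv 0$ being trivial). I would record this first, since it is what will make the interpolation below exactly tight.

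Next I would collect three soft facts about $h\colonequals P_tf$. First, since $P_t=e^{-tL}$ is a Markov semigroup, it is a contraction on every $L^r$, $1\le r\le\infty$, so $\|h\|_1\le\|f\|_1=\mu$ and $\|h\|_\infty\le\|f\|_\infty=1$. Second, $P_t$ preserves the tail space, and there it contracts in $L^2$ at the rate dictated by level $k$: because $\widehat h(S)=e^{-t|S|}\widehat f(S)$ vanishes for $|S|<k$ and $e^{-t|S|}\le e^{-tk}$ otherwise, Parseval gives $\|h\|_2\le e^{-tk}\|f\|_2=e^{-tk}\mu^{1/2}$. None of this is new; all the content is in how these are combined.

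Then I would interpolate. For $1<p\le 2$, write $\tfrac1p=\tfrac\theta2+\tfrac{1-\theta}1$ with $\theta=\tfrac{2(p-1)}{p}\in(0,1]$; Lyapunov's inequality together with the two bounds above gives $\|h\|_p\le\|h\|_2^\theta\|h\|_1^{1-\theta}\le e^{-tk\theta}\mu^{\theta/2+1-\theta}=e^{-2tk(p-1)/p}\,\mu^{1/p}=e^{-2tk(p-1)/p}\|f\|_p$, and $(p-1)/p=\min\{(p-1)/p,1/p\}$ on this range. For $p\ge 2$ I would interpolate instead between $L^2$ and $L^\infty$, i.e.\ simply use $|h|^p\le|h|^2\,\|h\|_\infty^{p-2}\le|h|^2$ pointwise, whence $\|h\|_p^p\le\|h\|_2^2\le e^{-2tk}\mu=e^{-2tk}\|f\|_p^p$, that is $\|h\|_p\le e^{-2tk/p}\|f\|_p$, with $1/p=\min\{(p-1)/p,1/p\}$ here. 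The two regimes together are exactly \eqref{one3}.

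There is no genuine analytic obstacle; the only thing to be careful about is the exponent bookkeeping, and the (pleasant) point that the two natural interpolations — toward $L^1$ for $p<2$ and toward $L^\infty$ for $p>2$ — are precisely what produces the two branches of the $\min$ in \eqref{one3}, while the identity $\|f\|_r=\mu^{1/r}$ collapses the surplus powers of $\mu$ to exactly $\|f\|_p$ in each case. Finally I would remark that the same argument runs verbatim for any symmetric Markov semigroup on a probability space whose generator has its positive spectrum bounded below by $k$ on the relevant tail space, since only $L^r$‑contractivity of the semigroup, the $L^2$ spectral decay, and the pointwise identity $|f|^r=\mathbf{1}_{\{f\ne 0\}}$ were used.
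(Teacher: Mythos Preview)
Your proof is correct and is essentially the same as the paper's: both use the identity $\E|f|^{r}=\P(f\neq 0)$ for $\{-1,0,1\}$-valued $f$, the $L^{2}$ spectral decay $\|P_{t}f\|_{2}\le e^{-tk}\|f\|_{2}$, and then H\"older/Lyapunov interpolation toward $L^{1}$ for $1<p<2$ and the pointwise bound $|P_{t}f|\le 1$ for $p>2$. The only cosmetic difference is that the paper states the $1<p<2$ step directly as the H\"older inequality $\E|g|^{p}\le(\E|g|)^{2-p}(\E g^{2})^{p-1}$, which is exactly your Lyapunov bound with $\theta=2(p-1)/p$.
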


%*** removed 'C=1', since C is not yet defined
The constant in Theorem \ref{thm13} for $k=1$, which comes from an application of H\"{o}lder's inequality, is strictly worse than that of Theorem \ref{main_boolean}.
Again the proof of Theorem \ref{thm13} extends to cover $P_{t}$ being any symmetric Markov semigroup as long as  $f\colon\Omega\to\{-1,0,1\}$ satisfies $\vnorm{P_{t}f}_{2}\leq e^{-tk}\vnorm{f}_{2}$.

Our results in Theorem~\ref{main_boolean} and Theorem~\ref{thm13} should be compared
to the following result of Mendel-Naor below, which they attributed to P. A. Meyer \cite{meyer84}.
%For $p\geq2$ and $\Omega=\{-1,1\}^{n}$, Theorem \ref{main_boolean}  to the much stronger estimate of Mendel-Naor below, which they attributed to P. A. Meyer \cite{meyer84}.  For $S\subset\{1,\ldots,n\}$ and $x=(x_{1},\ldots,x_{n})\in\{-1,1\}^{n}$, define $W_{S}(x)\colonequals\prod_{i\in S}x_{i}$.
%However, the proof of Mendel-Naor, which uses H\"{o}lder's inequality and hypercontractivity,

\begin{theorem}\cite[Lemma 5.4]{mendel12}\label{thm20}
Let $2\leq p<\infty$.  Then there exists $c(p)>0$ such that the following holds.  Let $f\colon\{-1,1\}^{n}\to\R$ with $\E f W_{S}=0$ for all $\abs{S}<k$.  Then
$$
\forall\,t>0,\quad\vnormf{P_{t}f}_{p}\leq e^{-k\min(t,t^{2})c(p)}\vnorm{f}_{p}.
$$
$$
\vnormf{Lf}_{p}\geq c(p)\sqrt{k}\vnorm{f}_{p}.$$
\end{theorem}
The second inequality can be considered a ``higher-order'' Poincar\'{e} inequality, and it follows from the first by writing $f=\int_{0}^{\infty}e^{-tL}Lfdt$ and then applying the $L^{p}(\{-1,1\}^{n})$ triangle inequality.

%*** changed setup to setting
One should also compare our results to the following result of Hino (in a much more general setting) that is also briefly mentioned at the end of the proof of Theorem 1 in \cite{meyer84}.

\begin{theorem}\cite[Theorem 3.6(ii)b]{hino00}\label{thm9}
Let $1<p<\infty$.  Then there exists $\infty > M(n),\delta(n)>0$ such that, for any $f\colon \{-1,1\}^n \to\R$ with $\E f=0$, for any $t>0$,
$$\vnorm{P_{t}f}_{p}\leq M(n)e^{-\delta(n) t}\vnorm{f}_{p}$$
\end{theorem}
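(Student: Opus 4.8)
\medskip
\noindent\emph{Proof proposal.}\ The plan is to reduce the $L^{p}$ bound to an $L^{2}$ bound, exploiting that $\{-1,1\}^{n}$ is a \emph{finite} set, so that the distinction between $P_{t}$ being a strict contraction and being merely a contraction disappears under a finite-dimensional norm comparison; the only genuine analytic input is the degree-one $L^{2}$ Poincar\'e inequality. First I would record the $L^{2}$ estimate: if $\E f=0$ then $\widehat f(\emptyset)=0$, hence $f=\sum_{S\neq\emptyset}\widehat f(S)W_{S}$, and since $\abs{S}\geq 1$ for $S\neq\emptyset$, Parseval gives
\[
\vnorm{P_{t}f}_{2}^{2}=\sum_{S\neq\emptyset}e^{-2t\abs{S}}\widehat f(S)^{2}\leq e^{-2t}\sum_{S\neq\emptyset}\widehat f(S)^{2}=e^{-2t}\vnorm{f}_{2}^{2},
\]
so $\vnorm{P_{t}f}_{2}\leq e^{-t}\vnorm{f}_{2}$ for every mean $0$ function $f$ and every $t>0$. (This is the $k=1$ case of the $L^{2}$ Poincar\'e inequality, and the remainder of the argument would go through verbatim for any symmetric Markov semigroup once such an $L^{2}$ bound is in hand.)

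Next I would invoke norm comparison on $\{-1,1\}^{n}$. Since the uniform measure has atoms of mass $2^{-n}$, one has $\vnorm{g}_{\infty}\leq 2^{n/2}\vnorm{g}_{2}$, and combining this with the interpolation inequality $\vnorm{g}_{p}\leq\vnorm{g}_{\infty}^{1-2/p}\vnorm{g}_{2}^{2/p}$ for $p\geq 2$ (and the analogous bound with the roles of $p$ and $2$ swapped when $p<2$) produces a finite constant $A=A(n,p)\colonequals 2^{n\abs{1/2-1/p}}$ with $\vnorm{g}_{p}\leq A\vnorm{g}_{2}$ and $\vnorm{g}_{2}\leq A\vnorm{g}_{p}$ for all $g\colon\{-1,1\}^{n}\to\R$. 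Then for mean $0$ $f$,
\[
\vnorm{P_{t}f}_{p}\;\leq\;A\,\vnorm{P_{t}f}_{2}\;\leq\;A\,e^{-t}\vnorm{f}_{2}\;\leq\;A^{2}\,e^{-t}\vnorm{f}_{p},
\]
which is the asserted bound with $M(n)\colonequals A(n,p)^{2}$ and $\delta(n)\colonequals 1$.

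I do not expect any real obstacle here: the statement is ``soft'', and finite-dimensionality does all of the work once the spectral gap is available --- which is precisely why, in Hino's more general setting, the content is an abstract functional-analytic fact rather than anything combinatorial, and why on the cube it is only worth a remark at the end of Meyer's argument. The one point I would flag is that on $\{-1,1\}^{n}$ the constants can in fact be taken \emph{dimension-free}: for $p\geq 2$ this is immediate from Theorem~\ref{thm20} with $k=1$ (bound $\min(t,t^{2})\geq t-1$ to fold the small-$t$ regime into the multiplicative constant), and hence also for $1<p<2$ by a duality argument --- writing $\vnorm{P_{t}f}_{p}=\sup\{\langle f,P_{t}h\rangle:\vnorm{h}_{p'}\leq 1\}$, using self-adjointness of $P_{t}$, replacing $P_{t}h$ by $P_{t}(h-\E h)$ (legitimate since $\E f=0$), and applying the $p'\geq 2$ case to the mean $0$ function $h-\E h$ --- at the cost of only a harmless numerical factor.
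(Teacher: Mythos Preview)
The paper does not supply its own proof of Theorem~\ref{thm9}: it is quoted as a result of Hino (and attributed in passing to Meyer) purely for context, to contrast its dimension-dependent constants with the dimension-free bounds of Theorems~\ref{main_boolean} and~\ref{thm20}. So there is nothing to compare against.

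Your argument is correct. The spectral-gap bound $\vnorm{P_{t}f}_{2}\leq e^{-t}\vnorm{f}_{2}$ for mean-zero $f$ is exactly \eqref{six2} with $k=1$, and the finite-space norm equivalence $\vnorm{g}_{p}\asymp_{n}\vnorm{g}_{2}$ with constant $2^{n\abs{1/2-1/p}}$ is standard and correctly stated. Chaining these gives the claim with $\delta(n)=1$ and $M(n)=2^{2n\abs{1/2-1/p}}$ (in fact one of the two $A$'s is superfluous in each range of $p$, so $M(n)=A$ already suffices, but this is cosmetic). Your closing remark that on the cube the constants can be made dimension-free --- via Theorem~\ref{thm20} for $p\geq2$ and duality for $1<p<2$ --- is also correct and is precisely the point the paper is making by juxtaposing Theorem~\ref{thm9} with Theorems~\ref{main_boolean} and~\ref{thm20}; indeed Theorem~\ref{main_boolean} gives the dimension-free version directly with $M=1$.
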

The dependence of the constants $M(n)$ and $\delta(n)$ on the dimension makes this inequality weaker than the previous two in settings where dimension independent inequalities are desired.

%, then \eqref{one3} holds for $f$ by H\"{o}lder's inequality, as shown in the proof of Theorem \ref{thm13}.
%\end{remark}

\subsection{Poincar\'{e} Inequalities}
This heat smoothing estimate in Theorem~\ref{main_boolean} is equivalent to the following Poincar\'{e} inequality.
\begin{theorem}[\embolden{Poincar\'{e} Inequality}]\label{main2_boolean}
Under the above assumptions for every $p \in (1,\infty)$ and every $f: \{-1,1\}^n \rightarrow \R$ with $\E f=0$ there is
$$
\E \abs{f}^{p-1}\sign(f)Lf\geq\frac{2p-2}{(p^{2}-2p+2)}\cdot\E\abs{f}^{p}.
$$
\end{theorem}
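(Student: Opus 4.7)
The plan is to derive Theorem \ref{main2_boolean} as the infinitesimal ($t\to 0^{+}$) form of the heat smoothing estimate in Theorem \ref{main_boolean}; the two statements are in fact equivalent, and this is the easier of the two implications. The actual analytic content lives in the constant $C(p)\colonequals(2p-2)/(p^{2}-2p+2)$, which is established in Theorem \ref{main_boolean}.

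First I would observe that the hypothesis $\E f=0$ is preserved by the semigroup, since $\widehat{P_{t}f}(\emptyset)=\widehat{f}(\emptyset)=0$, so Theorem \ref{main_boolean} applies to $P_{s}f$ for every $s\geq 0$ with the same constant. Setting $\phi(t)\colonequals\|P_{t}f\|_{p}^{p}=2^{-n}\sum_{x\in\{-1,1\}^{n}}|P_{t}f(x)|^{p}$, Theorem \ref{main_boolean} gives
$$\phi(t)\leq e^{-pC(p)t}\,\phi(0),\qquad t\geq 0,$$
with equality at $t=0$. Next I would compute $\phi'(0)$. For $1<p<\infty$ the map $u\mapsto|u|^{p}$ is $C^{1}$ on $\R$ with derivative $p|u|^{p-1}\sign(u)$ (using $\sign(0)\colonequals 0$), and for each fixed $x\in\{-1,1\}^{n}$ the function $t\mapsto P_{t}f(x)$ is smooth with $\partial_{t}P_{t}f=-LP_{t}f$. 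Termwise differentiation of the finite sum yields
$$\phi'(t)=-p\,\E|P_{t}f|^{p-1}\sign(P_{t}f)\cdot LP_{t}f,\qquad t\geq 0.$$
At any $x$ with $f(x)=0$ one has $|P_{t}f(x)|^{p}=O(t^{p})=o(t)$ since $p>1$, so the pointwise contribution to $\phi'(0)$ from such $x$ vanishes, which matches the convention $\sign(0)=0$. Comparing the right-derivatives of the two sides of the heat smoothing inequality at $t=0$ gives $\phi'(0)\leq -pC(p)\phi(0)$, which, after dividing by $-p$, is exactly the claimed Poincar\'{e} inequality.

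The only genuine subtlety I expect is the legitimacy of differentiating $\phi$ at the endpoint $t=0$ when $p$ is close to $1$ (so $u\mapsto|u|^{p-1}\sign(u)$ is only H\"older near the origin); on the finite cube this is routine via the $O(t^{p})$ bound above and is the sole technical step. For completeness, the converse implication (Theorem \ref{main2_boolean} $\Rightarrow$ Theorem \ref{main_boolean}), which justifies the equivalence asserted in the paragraph preceding the statement, is a Gr\"onwall argument: applying Theorem \ref{main2_boolean} to the (still mean-zero) function $P_{s}f$ yields $\phi'(s)\leq -pC(p)\phi(s)$ for every $s\geq 0$, and integrating on $[0,t]$ recovers $\|P_{t}f\|_{p}\leq e^{-C(p)t}\|f\|_{p}$.
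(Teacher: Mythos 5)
Your reduction goes in the wrong direction relative to where the content actually lies, and within this paper it is circular. The equivalence you describe is real, and the differentiation/Gr\"onwall mechanism is exactly the one the paper uses --- but in reverse: the paper first proves the Poincar\'e inequality (Theorem \ref{main2}, of which Theorem \ref{main2_boolean} is the special case $C=1$) directly, and then obtains the heat-smoothing bound of Theorem \ref{main_boolean} by differentiating $\exp\bigl(\tfrac{(2p-2)pt}{(p^{2}-2p+2)C}\bigr)\E|P_{t}f|^{p}$ in $t$. There is no independent proof of Theorem \ref{main_boolean} with the constant $\tfrac{2p-2}{p^{2}-2p+2}$ available to you: the complex-interpolation argument of Section \ref{secnaz} yields strictly worse constants, and the semigroup proof of Section \ref{secsemi} itself invokes Corollary \ref{cor4}, which is extracted from the proof of Theorem \ref{main2}. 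So declaring that ``the actual analytic content lives in Theorem \ref{main_boolean}'' defers the entire problem to a statement whose only proof (at this constant) passes through the very inequality you are asked to prove.

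What is missing is therefore the whole substance of the paper's argument: (i) the Stroock--Varopoulos inequality (Lemma \ref{lemma13}), which via the representation \eqref{ten10}--\eqref{ten11} of the Dirichlet form gives $\E \phi_{p-1}(f)Lf\geq\tfrac{4(p-1)}{p^{2}}\E\phi_{p/2}(f)L\phi_{p/2}(f)$; (ii) the ordinary $L^{2}$ Poincar\'e inequality applied to $\phi_{p/2}(f)$, which produces $\E|f|^{p}-\bigl(\E f_{+}^{p/2}-\E f_{-}^{p/2}\bigr)^{2}$; and (iii) the genuinely new estimate, Lemma \ref{lemma12}, that for a mean-zero random variable $\bigl(\E f_{+}^{p/2}-\E f_{-}^{p/2}\bigr)^{2}\leq\bigl(1-\tfrac{p^{2}}{2(p^{2}-2p+2)}\bigr)\E|f|^{p}$, itself resting on Lemmas \ref{lemma10} and \ref{lemma11}. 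Without step (iii) one cannot control the mean of $\phi_{p/2}(f)$ (which is not zero even though $\E f=0$), and this is precisely where the constant $\tfrac{2p-2}{p^{2}-2p+2}$ comes from. Your technical discussion of differentiating $\|P_{t}f\|_{p}^{p}$ at $t=0^{+}$ is fine as far as it goes, and your converse (Poincar\'e $\Rightarrow$ heat smoothing) matches the paper's Section \ref{secheat}; but as a proof of Theorem \ref{main2_boolean} the proposal begs the question.
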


The usual Poincar\'{e} inequality corresponds to the case $p=2$ of Theorem \ref{main2_boolean}.  Theorem \ref{main2_boolean}
should be contrasted with Beckner's Poincar\'{e} inequality.
\begin{theorem}\cite{beckner89}\label{thm15}
Let $f\colon\{-1,1\}^{n}\to\R$.  For all $1\leq p\leq2$,
$$
(2-p)\E fLf\geq\E\abs{f}^{2}-(\E\abs{f}^{p})^{2/p}.
$$
\end{theorem}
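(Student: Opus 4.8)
\textbf{Proof strategy for Theorem~\ref{thm15} (Beckner's Poincar\'{e} inequality).}
The plan is to use the hypercontractivity (two-point inequality) of the noise operator $P_t$ together with a comparison of derivatives at an appropriate time. Recall that on $\{-1,1\}^n$ one has $P_t = e^{-tL}$, so that $\frac{d}{dt}\big|_{t=0} P_t f = -Lf$. The key tool is the Bonami--Beckner hypercontractive inequality: for $1 \le p \le 2$ and $t \ge 0$ with $e^{-2t} \le p-1$, one has $\vnorm{P_t f}_2 \le \vnorm{f}_p$. Applying this at the threshold time $t_0$ defined by $e^{-2t_0} = p-1$ (so $t_0 = -\tfrac12 \log(p-1)$), we get $\vnorm{P_{t_0} f}_2^2 \le \vnorm{f}_p^2$, i.e. $\sum_S (p-1)^{\abs S} \widehat{f}(S)^2 \le \vnorm{f}_p^2$.

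First I would reduce to showing $\vnorm{f}_2^2 - \vnorm{P_{t_0}f}_2^2 \le (2-p)\,\E f L f$, since combining this with the hypercontractive bound above immediately yields the claimed inequality $\vnorm{f}_2^2 - \vnorm{f}_p^2 \le (2-p)\,\E f Lf$. In Fourier terms, the left side is $\sum_S \big(1 - (p-1)^{\abs S}\big)\widehat{f}(S)^2$ and the right side is $(2-p)\sum_S \abs S \,\widehat{f}(S)^2$. So it suffices to verify the \emph{elementary numerical inequality}
\begin{equation*}
1 - (p-1)^{m} \le (2-p)\, m \qquad \text{for all integers } m \ge 0 \text{ and all } p \in [1,2].
\end{equation*}
Writing $p-1 = 1-s$ with $s = 2-p \in [0,1]$, this becomes $1-(1-s)^m \le sm$, which is exactly Bernoulli's inequality $(1-s)^m \ge 1 - sm$. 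This holds for all $m \ge 0$ and $s \in [0,1]$, so the reduction goes through termwise in the Fourier expansion.

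The only remaining point is to confirm the endpoint cases and sign conventions: for $p=2$ the inequality reads $0 \ge \vnorm{f}_2^2 - \vnorm{f}_2^2 = 0$, which is an equality; for $p=1$ it reads $\E f Lf \ge \vnorm{f}_2^2 - (\E\abs{f})^2$, which is the statement with $t_0 = \infty$, i.e. $P_{t_0}f \to \E f$ and $\vnorm{P_{t_0}f}_2^2 \to (\E f)^2$, and one checks the Fourier inequality $1 \le \abs S$ holds for $\abs S \ge 1$ while the $S=\emptyset$ terms cancel. The main (and essentially only) obstacle is organizational rather than substantive: one must make sure the threshold time $t_0$ for hypercontractivity is exactly the time at which $e^{-2t_0} = p-1$, and that the Bernoulli-inequality comparison is applied correctly accounting for the $S = \emptyset$ term (where both sides vanish). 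No serious analytic difficulty arises, since everything decouples into a per-frequency scalar inequality. An alternative, self-contained route avoiding explicit invocation of hypercontractivity would be to differentiate $\vnorm{P_t f}_{q(t)}$ along the Bonami--Beckner curve $q(t) = 1 + e^{2t}(p-1)$, but the reduction above via the single threshold time is the cleanest.
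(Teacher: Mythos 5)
Your proposal is correct and follows essentially the same route the paper sketches after the theorem statement: combine the Fourier-analytic bound $\E\abs{f}^{2}-\E\abs{P_{t}f}^{2}\leq(2-p)\E fLf$ at the threshold time $e^{-2t}=p-1$ with the hypercontractive inequality $\vnormf{P_{t}f}_{2}\leq\vnorm{f}_{p}$. Your termwise verification via Bernoulli's inequality $1-(p-1)^{m}\leq(2-p)m$ correctly fills in the "by Fourier analysis" step.
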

Specifically, Beckner notes that, for $t>0$ with $e^{-2t}=p-1$, $(2-p)\E fLf\geq\E\abs{f}^{2}-\E\abs{P_{t}f}^{2}$ by Fourier analysis.  He then adds the hypercontractive inequality \cite{bonami70,nelson73,gross75} to this inequality to prove Theorem \ref{thm15}.  However, Theorem \ref{main2_boolean} does not seem to follow from hypercontractivity
%even when $\{-1,1\}^{n}$,
so we need to apply different methods.

%A family of ``classical'' $L_{p}$ Poincar\'{e} inequalities are proved on the hypercube $\{-1,1\}^{n}$ in \cite{efraim08}, where the connection to Riesz transforms \cite{pisier88} is emphasized.

%*** Changed subsection title to incorporate Harper's theorem
\subsection{The KKL, Talagrand and Harper theorems in Tail Spaces}

%*** Changed this paragraph to incorporate Harper's theorem
The KKL Theorem and its strengthening by Talagrand are two of the most fundamental theorems in the theory of Boolean functions.  Harper's theorem is an edge-isoperimetric inequality on the hypercube.  Recent questions by Hatami and Kalai asked if the KKL and Harper theorems could be improved for functions in tail spaces. It is natural to ask the same question for Talagrand's theorem.

We recall some standard definitions.

\begin{definition}[\embolden{Influences}]\label{infdef}
Let $f\colon\{-1,1\}^{n}\to\{-1,1\}$ and let $i\in\{1,\ldots,n\}$.  Define the $i$'th
 {\em influence} $I_i(f)\in\R$ of a function $f : \{-1,1\}^n \to \{-1,1\}$ by
 \[
 I_i(f) \colonequals
 P[f(x_1,\ldots,x_{i-1},x_i,x_{i+1},\ldots,x_n) \neq f(x_1,\ldots,x_{i-1},y,x_{i+1},\ldots,x_n)],
 \]
 where $x_i,y$ are i.i.d. uniform random variables on $\{-1,1\}$ for all $i=1,\ldots,n$.
\end{definition}

%\begin{definition}[\embolden{Walsh functions}]\label{walshdef}
%Let $x=(x_{1},\ldots,x_{n})\in\{-1,1\}^{n}$ and let $S\subset\{1,\ldots,n\}$.  Define $W_{S}\colon\{-1,1\}^{n}\to\{-1,1\}$ by $W_{S}(x)\colonequals\prod_{i\in S}x_{i}$.
%\end{definition}

%Theorem \ref{thm4} gives a slightly stronger form of the Kahn-Kalai-Linial theorem for Boolean functions $f\colon\{-1,1\}^{n}\to\{-1,1\}$.  However, this strengthened inequality has some shortcomings, which we now discuss.

Since the range of a Boolean function is restricted to $\{-1,1\}$, its Fourier coefficients should satisfy some constraints that general real-valued functions with $\vnorm{f}_{2}=1$ do not satisfy.  For instance, the influences of a Boolean function could be slightly larger than expected.  For example, the non-Boolean function $f=(n(n-1)/2)^{-1/2}\sum_{S\subset\{1,\ldots,n\}\colon\abs{S}=2}W_{S}$ satisfies $\vnorm{f}_{2}=1$, where $I_{i}f=2/n$ for all $i=1,\ldots,n$.  At the opposite extreme, the Boolean function $f=W_{\{1,\ldots,n\}}$ satisfies $\vnorm{f}_{2}=1$, where $I_{i}f=1$ for all $i=1,\ldots,n$.  With these examples in mind, we may be led to believe that Boolean functions have larger influences than arbitrary functions with $\vnorm{f}_{2}=1$.  Indeed, Ben-Or and Linial proved the following Proposition, and they conjectured that their bound on influences was the best possible.
%maybe some Fourier coefficients of a Boolean function lack some symmetry, whereas a real-valued function with $\vnorm{f}_{2}=1$ can have all Fourier coefficients equal to each other. %Indeed, by selecting a few real numbers to be the Fourier coefficients of some function, it seems difficult in practice to get the outputs of $\{-1,1\}$ for any input from $\{-1,1\}^{n}$.

\begin{proposition}\label{prop1}
\cite[Theorem 3]{benor89}
There exists a universal constant $c'>0$ and there exists a Boolean function $f \colon\{-1,1\}^n \to \{-1,1\}$ with $\E f=0$ such that $\max_{i=1,\ldots,n} I_i(f) \leq c'(\log n)/n$.
\end{proposition}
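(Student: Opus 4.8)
The plan is to exhibit the \emph{tribes} function of Ben-Or and Linial. Partition $\{1,\dots,n\}$ into $m$ disjoint blocks $B_1,\dots,B_m$, each of size $w$, with $mw\le n$ (any remaining coordinates will simply be ignored by $f$). Define $f(x)=1$ if there is a block $B_r$ with $x_j=1$ for every $j\in B_r$, and $f(x)=-1$ otherwise; thus $f$ is an OR of ANDs. I would take $w=\log_2 n-\log_2\log_2 n+O(1)$ and $m=\lfloor n/w\rfloor$ (possibly decreased by a bounded amount), chosen so that $m2^{-w}=\Theta(1)$.

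The influence computation is then routine. Since the blocks are disjoint, the events ``$B_r$ is all-ones'' are independent with probability $2^{-w}$ each, so $\P[f=-1]=(1-2^{-w})^m$, which by the choice of parameters is bounded away from $0$ and $1$. A coordinate $i\in B_r$ is pivotal at $x$ precisely when every coordinate of $B_r$ other than $i$ equals $1$ and no block $B_s$ with $s\neq r$ is all-ones; hence, by independence,
\[
I_i(f)=2^{-(w-1)}(1-2^{-w})^{m-1}\le 2\cdot 2^{-w}=O\!\left(\frac{\log n}{n}\right),
\]
while $I_i(f)=0$ for the ignored coordinates. So $\max_i I_i(f)\le c'(\log n)/n$ for a universal constant $c'$.

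The only point that needs care is making $\E f$ exactly $0$ rather than merely small. First choose the $O(1)$ term in $w$ and then $m$ so that $|\P[f=-1]-\tfrac12|$ is minimized; since replacing $m$ by $m\pm1$ shifts $\P[f=-1]$ by $\Theta(2^{-w})$, this gives $|\P[f=-1]-\tfrac12|=O(2^{-w})=O(\log n/n)$. Now pick a set $A$ contained in $\{x:f(x)=\sigma\}$, where $\sigma=1$ if $\P[f=-1]<\tfrac12$ and $\sigma=-1$ otherwise, of measure exactly $|\P[f=-1]-\tfrac12|$ (possible, since this number is a multiple of $2^{-n}$ and $\{f=\sigma\}$ has about $2^{n-1}$ points), and let $\tilde f$ agree with $f$ off $A$ and equal $-\sigma$ on $A$. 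Then $\E\tilde f=0$, and for every $i$ one has $|I_i(\tilde f)-I_i(f)|\le 2\,\P[A]=O(\log n/n)$, so $\tilde f$ still satisfies the required influence bound (with a larger $c'$). The main obstacle is precisely this balancing step — the influence estimate itself is an immediate consequence of independence — and the observation that makes it painless is that flipping $f$ on a set of measure $\delta$ perturbs every influence by at most $2\delta$, so no structural control on $A$ is needed.
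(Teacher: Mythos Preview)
Your argument is correct and is exactly the tribes construction the paper has in mind; the paper does not give its own proof of this proposition but merely cites Ben-Or and Linial, and later (Section~\ref{sechatami}) uses the same OR-of-ANDs template as its starting point. Your balancing step via flipping on a small set, together with the observation that modifying $f$ on a set of measure $\delta$ changes each influence by at most $2\delta$, is a clean way to get $\E f=0$ exactly.
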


Kahn, Kalai and Linial then showed that the influence bound in Proposition \ref{prop1} is in fact the best possible, thereby proving the conjecture of Ben-Or and Linial.

%*** Kahn-Kalai-Linial changed to KKL
\begin{theorem}[\embolden{KKL}]\label{kkl}\cite[Theorem 3.1]{kahn88}
There exists a universal constant $c>0$ such that, for any $f\colon\{-1,1\}^{n}\to\{-1,1\}$, $\max_{i=1,\ldots,n}I_{i}(f)\geq c(\E(f-\E f)^{2})(\log n)/n$.
\end{theorem}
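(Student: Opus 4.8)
The plan is to run the classical Kahn--Kalai--Linial argument, whose engine is hypercontractivity. Write $\mathrm{Var}(f)=\E(f-\E f)^2=\sum_{S\neq\emptyset}\widehat f(S)^2$, set $\delta\colonequals\max_{i}I_i(f)$, and let $I(f)\colonequals\sum_{i=1}^n I_i(f)=\sum_S\abs S\,\widehat f(S)^2$ be the total influence. The core target will be the estimate
$$I(f)\ \geq\ c_0\,\log(1/\delta)\,\mathrm{Var}(f)$$
for an absolute constant $c_0>0$. Granting this, the theorem follows by combining it with the trivial bound $I(f)\le n\delta$ and splitting on the size of $\delta$: if $\delta\le n^{-1/2}$ then $\log(1/\delta)\ge\tfrac12\log n$, so $n\delta\ge\tfrac{c_0}{2}\,(\log n)\,\mathrm{Var}(f)$ and we are done; if $\delta> n^{-1/2}$, then since $\mathrm{Var}(f)\le 1$ and $\log n\le\sqrt n$ we get $\mathrm{Var}(f)(\log n)/n\le n^{-1/2}<\delta$ directly. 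The finitely many small values of $n$, and the regime where $\delta$ exceeds an absolute constant, are absorbed into the final constant using the elementary bound $\delta\ge I(f)/n\ge\mathrm{Var}(f)/n$; so in the main argument we may freely assume $\delta$ is as small as we like.

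To prove the displayed estimate, I would first introduce the discrete derivatives $D_i f(x)\colonequals\tfrac12\bigl(f(x)-f(x^{\oplus i})\bigr)=\sum_{S\ni i}\widehat f(S)W_S$, where $x^{\oplus i}$ is $x$ with its $i$-th bit flipped. The crucial observation is that $D_i f$ is $\{-1,0,1\}$-valued (because $f$ is), so $\E\abs{D_i f}^q=\E(D_i f)^2=I_i(f)$ for every $q>0$. Applying the hypercontractive inequality \cite{bonami70,nelson73,gross75} in the form $\vnorm{P_s g}_2\le\vnorm{g}_{1+e^{-2s}}$ to $g=D_i f$, with $\rho\colonequals e^{-s}=\tfrac12$ (so $1+\rho^2=\tfrac54$), gives
$$\sum_{S\ni i}\rho^{2\abs S}\widehat f(S)^2=\vnorm{P_s D_i f}_2^2\ \le\ \bigl(\E\abs{D_i f}^{1+\rho^2}\bigr)^{2/(1+\rho^2)}=I_i(f)^{2/(1+\rho^2)}.$$
Summing over $i$, writing $2/(1+\rho^2)=1+\beta$ with $\beta=(1-\rho^2)/(1+\rho^2)=\tfrac35>0$, and using $I_i(f)^{\beta}\le\delta^{\beta}$, one gets $\sum_S\abs S\,\rho^{2\abs S}\widehat f(S)^2\le\delta^{\beta}I(f)$. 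Since $\abs S\,\rho^{2\abs S}\ge\rho^{2K}$ whenever $1\le\abs S\le K$, this yields the ``low-frequency'' bound $\sum_{1\le\abs S\le K}\widehat f(S)^2\le\rho^{-2K}\delta^{\beta}I(f)$. The complementary ``high-frequency'' bound is just the Poincar\'e inequality, $\sum_{\abs S>K}\widehat f(S)^2\le\tfrac1{K+1}\sum_S\abs S\,\widehat f(S)^2=\tfrac{I(f)}{K+1}$. Adding these, $\mathrm{Var}(f)\le\bigl(\tfrac1K+\rho^{-2K}\delta^{\beta}\bigr)I(f)$.

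The remaining step is to balance the two terms by choosing the cutoff $K\asymp\log(1/\delta)$: concretely $K=\bigl\lceil\tfrac{\log(1/\delta)}{5\log 4}\bigr\rceil$, so that $\rho^{-2K}=4^K\le 4\,\delta^{-1/5}$ and hence $\rho^{-2K}\delta^{\beta}\le 4\,\delta^{2/5}$. Since $\delta^{2/5}\log(1/\delta)\to 0$, the second term is $o(1/K)$ as $\delta\to 0$, so for $\delta$ below an absolute threshold it is at most $1/(2K)$; then $\mathrm{Var}(f)\le\tfrac{3}{2K}I(f)$, which rearranges to $I(f)\ge\tfrac{2K}{3}\mathrm{Var}(f)\ge c_0\log(1/\delta)\,\mathrm{Var}(f)$, as required.

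I expect the one genuinely delicate point to be the calibration of the two free parameters — $\rho$, taken to be a fixed constant, and the cutoff $K\asymp\log(1/\delta)$ — since this is precisely where the logarithmic gain is manufactured, through the competition between the hypercontractive decay $\rho^{-2K}\delta^{\beta}$ and the Poincar\'e tail $1/K$. Everything else is bookkeeping: the (routine but slightly fussy) passage from a gain of $\log(1/\delta)$ to the $\log n$ in the statement via the dichotomy $\delta\lessgtr n^{-1/2}$, and the disposal of the degenerate cases (small $n$, or $\delta$ not small) by the crude bound $\delta\ge\mathrm{Var}(f)/n$.
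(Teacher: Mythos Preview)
Your argument is correct: it is the classical Kahn--Kalai--Linial proof, driven by applying hypercontractivity to the $\{-1,0,1\}$-valued derivatives $D_i f$ and balancing a low/high-frequency split at level $K\asymp\log(1/\delta)$. The paper does not supply its own proof of this theorem --- it is quoted as a known result with a citation to \cite{kahn88} --- so there is nothing to compare against beyond noting that what you wrote is essentially the original argument.
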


%*** CHANGED "one larger influence" to "a larger influence"
If a function $f\colon\{-1,1\}^{n}\to\{-1,1\}$ not only has mean zero, but it also has many Fourier coefficients which are zero, it similarly seems that even more special structure should exist within the Fourier coefficients of $f$.  That is, perhaps this function should have a larger influence than a mean zero function.  Hatami and Kalai therefore asked the following question, which would improve upon Theorem \ref{kkl}.
\begin{question}\label{q1}
Suppose $k=k(n)\to\infty$ as $n\to\infty$.  Does there exist $\omega(k)>0$ such that $\omega(k)\to\infty$ as $k\to\infty$, such that the following statement holds?
Let $f\colon\{-1,1\}^{n}\to\{-1,1\}$ with $\E fW_{S}=0$ for all $S\subset\{1,\ldots,n\}$ with $\abs{S}\leq k$.  Then $\max_{i=1,\ldots,n}I_{i}f\geq((\log n)/n)\cdot\omega(k)$.
\end{question}

Hatami speculated that a positive answer to the question above may help in proving the Entropy Influence Conjecture.
%***ADDED "we" in "Here prove"
Here we prove
that the answer to the question is negative by showing that
 \begin{theorem}[Question \ref{q1} for $k=\log n$]\label{thm30}
There exists $0<C,c<\infty$ such that, for infinitely many $n\in\N$, there exists $f\colon\{-1,1\}^{n}\to\{-1,1\}$ with $\E fW_{S}=0$ for all $S\subset\{1,\ldots,n\}$ with $\abs{S}\leq c \log n$ such that $\max_{i=1,\ldots,n}I_{i}f\leq C(\log n)/n$.
\end{theorem}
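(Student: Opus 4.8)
The plan is to substitute $\mathbb{F}_2$-linear forms (parities) into the inputs of an explicit tribes-type function, the parities being read off from a \emph{block-diagonal} generator matrix of a good binary code. Fix the balanced tribes-type function $\Phi\colon\{-1,1\}^{s}\to\{-1,1\}$ of Proposition~\ref{prop1} on $s=\Theta(n)$ inputs; its inputs split into $p$ blocks $D_{1},\dots,D_{p}$ of size $b=\Theta(\log n)$, it is read-once over these blocks, and each block-function is an $\mathrm{AND}$ of $b$ bits (up to the small modification forcing $\E\Phi=0$ exactly). I would give each block $D_{q}$ its own private set $E_{q}$ of $m=\Theta(b)$ coordinates, so that $n=pm$. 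For each $q$ take $M^{(q)}$ to be a $b\times m$ full-rank generator matrix of a binary code $\mathcal{C}_{q}\subseteq\mathbb{F}_{2}^{m}$ of minimum distance $\ge 2b$, which exists for a suitable constant ratio $m/b$ by the Gilbert--Varshamov bound. Let $M=\mathrm{diag}(M^{(1)},\dots,M^{(p)})$, an $s\times n$ matrix of full row rank $s$, let $B_{r}\subseteq[n]$ be the support of the $r$-th row of $M$, and set
$$
f(x)\colonequals\Phi\bigl(W_{B_{1}}(x),\dots,W_{B_{s}}(x)\bigr),\qquad x\in\{-1,1\}^{n}.
$$

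\textbf{Verification.} For the tail-space property: substituting $y_{r}=W_{B_{r}}$ into $\Phi=\sum_{T\subseteq[s]}\widehat{\Phi}(T)\prod_{r\in T}y_{r}$ gives $f=\sum_{T}\widehat{\Phi}(T)\,W_{U(T)}$ with $U(T)=\triangle_{r\in T}B_{r}$, the sum over $\mathbb{F}_2$ of the rows of $M$ indexed by $T$, hence a codeword of $\mathcal{C}:=\bigoplus_{q}\mathcal{C}_{q}$. Because $M$ has full row rank, $T\mapsto U(T)$ is a bijection onto $\mathcal{C}$, so $\widehat{f}(\emptyset)=\widehat{\Phi}(\emptyset)=0$ and the Fourier support of $f$ consists of nonzero codewords of $\mathcal{C}$, all of weight $\ge\min_{q}\mathrm{dist}(\mathcal{C}_{q})\ge 2b$, which exceeds $c\log n$ for $c=1$. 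For the influence bound: flipping $x_{i}$ flips exactly the inputs $W_{B_{r}}$ of $\Phi$ with $i\in B_{r}$, i.e.\ the inputs indexed by the support of column $i$ of $M$, and by block-diagonality these all lie in one block $D_{q}$. Since $M^{(q)}$ is surjective onto $\mathbb{F}_{2}^{b}$, the restriction to $D_{q}$ of the input vector $\bigl(W_{B_{r}}(x)\bigr)_{r}$ is uniform on $\mathbb{F}_{2}^{D_{q}}$, flipping $x_{i}$ shifts it by a fixed nonzero vector, and an $\mathrm{AND}$ of $b$ bits changes under such a shift with probability exactly $2^{1-b}$; as $\Phi$ is read-once over its blocks, this is the only way $\Phi$ can change, so $I_{i}(f)\le 2^{1-b}$. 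The balance of $\Phi$ forces $2^{-b}=\Theta((\log n)/n)$, whence $\max_{i}I_{i}(f)\le C(\log n)/n$.

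\textbf{Main obstacle.} The hard part is reconciling the two requirements. A large minimum distance for $\mathcal{C}$ wants the sets $B_{r}$ large, and double counting $\sum_{r}|B_{r}|=\sum_{i}|\{r:i\in B_{r}\}|$ then forces some coordinate into $\approx\log n$ of the $B_{r}$, so a single bit flip flips $\approx\log n$ inputs of $\Phi$ at once; a union bound over these would multiply the influence bound by $\log n$ and ruin it. The key observation that saves the argument is that flipping many inputs is harmless provided they all feed into \emph{one} block of $\Phi$: then the flip is merely a probability-$2^{1-b}$ perturbation of a single $\mathrm{AND}$ from inside, independent of how many inputs moved. Insisting that every column of $M$ be supported in one block is exactly what makes $M$ block-diagonal, and then the only thing left to check is that each tiny diagonal block can still generate a code of dimension $b$, length $\Theta(b)$, and distance $\Theta(b)$ — which Gilbert--Varshamov provides. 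The one genuinely fiddly point, and the reason the statement asserts only infinitely many $n$, is arranging $\E f=0$ exactly while keeping $\Phi$ read-once over its blocks; this is absorbed into the balanced construction of Proposition~\ref{prop1}.
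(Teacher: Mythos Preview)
Your core construction is the paper's, just phrased differently: the $\mathrm{AND}$ of the parities $W_{B_r}$, $r\in D_q$, equals $+1$ exactly when $x$ (read over $\mathbb{F}_2$) lies in $\ker M^{(q)}=\mathcal{C}_q^{\perp}$, so each block computes the code-indicator $g_{\mathcal{C}_q^{\perp}}$ of Proposition~\ref{prop:tribek}; taking $\mathrm{OR}$ over disjoint blocks is precisely Propositions~\ref{prop:general_tribes}--\ref{prop:general_tribes2}. Your Fourier-support calculation and the block-diagonal influence bound are both correct and reproduce Theorem~\ref{thm10}: a function in $L_+^{>\Omega(\log n)}$ with all influences $O((\log n)/n)$ and $|\E f|=O((\log n)/n)$.

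The genuine gap is the passage from $L_+^{>c\log n}$ to $L^{>c\log n}$, i.e.\ forcing $\E f=0$. You delegate this to Proposition~\ref{prop1}, asserting that the Ben--Or--Linial function may be taken read-once over $\mathrm{AND}$ blocks ``up to a small modification''; but Proposition~\ref{prop1} only asserts existence of a balanced low-influence function and says nothing about such structure, and in fact the structure you need does not exist. For an $\mathrm{OR}$ of $\mathrm{AND}$s on disjoint blocks of sizes $b_1,\dots,b_p$ one has $\P[\Phi=-1]=\prod_q(2^{b_q}-1)/2^{b_q}$; each $2^{b_q}-1$ is odd, so this equals $\tfrac12$ only when $p=b_1=1$. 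The same parity obstruction persists if you replace some $\mathrm{AND}$s by other block functions with translate-sensitivity $O(2^{-b})$ for every nonzero shift: such a $\phi_q$ must satisfy $\P[\phi_q=1]=a_q/2^{b_q}$ with $a_q=O(1)$, and then $2^{b_q}-a_q$ is a power of $2$ only for $a_q=0$. This is exactly the difficulty the paper isolates and resolves by a separate correction: after building the coding tribes it sets $G(x,y)=f(x)-2\,g_0(x)\,h(y)$ with $h\in L_+^{>m}$ of precisely the required mean (supplied by Lemma~\ref{lemma15}, unions of code cosets), and checks that the new $y$-variables still have influence $O((\log n)/n)$ because they are pivotal only on the event $\{g_0(x)=1\}$. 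Without an analogous correction your argument does not deliver a function with $\E f=0$.
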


In other words, there is a phase transition for the maximum influence of Boolean functions with vanishing Fourier coefficients.
This phase transition occurs when we require the first $k(n)$ Fourier coefficients to vanish where $k(n)/\log n$ is either bounded or unbounded, as $n\to\infty$.
We note that the functions constructed in Theorem~\ref{thm30}
%***REPLACED "does not" by
do not
provide a counter example to the Entropy Influence conjecture as their
%***REMOVED capital letter in "Entropy"
entropy
is of the same order as
%***ADDED "for"
for
the standard Tribes function.

We also note that  if $k = g(n) \log n$, where $g(n) \to \infty$ then it is trivial to improve the KKL estimate since
\[
\sum_{i=1}^{n}I_{i}f=\sum_{S\subset\{1,\ldots,n\}}\abs{S}\absf{\widehat{f}(S)}^{2}
\]
which implies
$\max_{i=1,\ldots,n}I_{i}f\geq g(n)(\log n)/n$.

%*** ADDED the following few paragraphs about Harper's inequality.
With a similar motivation to Question \ref{q1}, Kalai also asked whether or not the following isoperimetric inequality could be improved.
\begin{theorem}[\embolden{Harper's Inequality}]\label{harper}\cite[Theorem 2.39]{odonnell14},\cite{harper64}
For any $f\colon\{-1,1\}^{n}\to\{0,1\}$, $\sum_{i=1}^{n}I_{i}f\geq(2/\log 2)(\E f)\log(1/\E f)$.
\end{theorem}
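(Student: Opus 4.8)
The plan is to reduce Harper's inequality to a clean combinatorial fact about the Hamming cube and then prove that fact by induction on $n$. Write $A\colonequals f^{-1}(1)\subset\{-1,1\}^{n}$ and $\alpha\colonequals\E f=2^{-n}\abs{A}$, let $E(A)$ be the set of edges of the cube with both endpoints in $A$, and let $\partial A$ be the set of edges with exactly one endpoint in $A$. First I would record two elementary identities: $\sum_{i=1}^{n}I_{i}f=2^{1-n}\abs{\partial A}$ (the total influence of a set indicator is its edge boundary, suitably normalized), and $\abs{\partial A}=n\abs{A}-2\abs{E(A)}$ (count the $n$ cube-edges at each vertex of $A$ and split them into internal and boundary edges). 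Feeding these into the claimed bound and using $2\alpha\log_{2}(1/\alpha)=(2/\log 2)\alpha\log(1/\alpha)=2^{1-n}\abs{A}\big(n-\log_{2}\abs{A}\big)$, one sees that Harper's inequality is \emph{equivalent} to
$$
2\abs{E(A)}\leq\abs{A}\log_{2}\abs{A}.
$$
This is a reassuring reformulation: it says no induced subgraph of the cube on $m$ vertices has more than $\tfrac{1}{2}m\log_{2}m$ edges, and subcubes achieve equality, consistent with the constant $2/\log 2$ being optimal in the theorem.

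To prove $2\abs{E(A)}\leq\abs{A}\log_{2}\abs{A}$ I would induct on $n$, the case $n\leq 1$ being immediate. For the inductive step, split $A$ into its two slices $A_{-},A_{+}\subset\{-1,1\}^{n-1}$ along the last coordinate. If one slice is empty, $A$ lies in an $(n-1)$-dimensional subcube and the hypothesis applies directly; otherwise set $m_{\pm}\colonequals\abs{A_{\pm}}\geq 1$ and assume without loss of generality $m_{-}\leq m_{+}$. Every edge of $E(A)$ either sits inside one slice or joins a vertex of $A_{-}$ to the vertex directly above it in $A_{+}$, so
$$
\abs{E(A)}=\abs{E(A_{-})}+\abs{E(A_{+})}+\abs{A_{-}\cap A_{+}}\leq\abs{E(A_{-})}+\abs{E(A_{+})}+m_{-}.
$$
Applying the inductive hypothesis to $A_{-}$ and $A_{+}$, the claim reduces to the one-variable inequality obtained by writing $m_{-}=xm$, $m_{+}=(1-x)m$ with $m=m_{-}+m_{+}$ and $x\in(0,1/2]$, namely $h(x)\colonequals x\log_{2}x+(1-x)\log_{2}(1-x)+2x\leq 0$.

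The main (essentially the only) obstacle is verifying this scalar inequality, which I would handle by elementary calculus: $h(0^{+})=h(1/2)=0$, while $h'(x)=\log_{2}\!\big(4x/(1-x)\big)$ has its unique zero at $x=1/5$ and is negative on $(0,1/5)$ and positive on $(1/5,1/2)$. Hence $h$ decreases then increases, so on $[0,1/2]$ it attains its maximum at an endpoint, where it equals $0$; therefore $h\leq 0$ throughout, completing the induction. Tracking equalities backward shows Harper's inequality is tight exactly for subcubes. (Alternatively one can avoid the reformulation and induct directly on $\abs{\partial A}$; the analogous scalar step is then $a\log(1/a)+b\log(1/b)+(\log 2)\abs{a-b}\geq(a+b)\log\big(2/(a+b)\big)$ for slice densities $a,b\in[0,1]$, which again follows from a unimodality argument after fixing $a+b$, though this route additionally requires the symmetry $\abs{\partial A}=\abs{\partial A^{c}}$ to dispose of the range $\E f>1/2$.)
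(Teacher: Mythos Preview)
Your proof is correct. The paper does not actually prove Theorem~\ref{harper}; it is quoted as background from \cite{odonnell14} and \cite{harper64}, so there is no ``paper's own proof'' to compare against. What you have written is essentially the standard inductive proof of the edge-isoperimetric inequality on the cube: the reformulation $2\abs{E(A)}\leq\abs{A}\log_{2}\abs{A}$ is exactly the form in which the result is usually proved (it is the content of Harper's original paper and of the exercise in O'Donnell's book that the paper cites), and your slice-and-induct argument with the scalar check $h(x)\leq 0$ on $[0,1/2]$ is the canonical route. The calculus is right: $h'(x)=\log_{2}\!\big(4x/(1-x)\big)$ vanishes only at $x=1/5$, so $h$ is unimodal with $h(0^{+})=h(1/2)=0$.

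One small bookkeeping remark: your identity $\sum_{i}I_{i}f=2^{1-n}\abs{\partial A}$ uses the convention $I_{i}f=\P_{x}[f(x)\neq f(x^{(i)})]$, which is the one in \cite{odonnell14} and the one under which the constant $2/\log 2$ in Theorem~\ref{harper} is sharp (equality on subcubes, as you note). The paper's own Definition~\ref{infdef} takes the $i$th coordinate and its replacement to be independent uniform bits, which introduces an extra factor of $1/2$; but the paper itself uses the O'Donnell normalization elsewhere (e.g.\ in the identity $\sum_{i}I_{i}f=\sum_{S}\abs{S}\absf{\widehat{f}(S)}^{2}$ just before Theorem~\ref{thm4}), so your choice is the intended one for this statement.
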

To see the isoperimetric content of Theorem \ref{harper}, we consider the hypercube $\{-1,1\}^{n}$ as the vertices of a graph, where an edge connects $(x_{1},\ldots,x_{n}),(y_{1},\ldots,y_{n})\in\{-1,1\}^{n}$ if and only if $\abs{\{i\in\{1,\ldots,n\}\colon x_{i}\neq y_{i}\}}=1$.  Then $(1/n)\sum_{i=1}^{n}I_{i}f$ is equal to the fraction of edges of $\{-1,1\}^{n}$ between the sets $\{x\in\{-1,1\}^{n}\colon f(x)=0\}$ and $\{x\in\{-1,1\}^{n}\colon f(x)=1\}$.  And the quantity $(\E f)\log(1/\E f)$ measures the volume of the set $\{x\in\{-1,1\}^{n}\colon f(x)=1\}$.
%for this function f, the left side is 1, and the mean is 1/2, so the right side is 1.

If $f(x_{1},\ldots,x_{n})\colonequals (x_{1}+1)/2$, then equality nearly holds in Theorem \ref{harper}.  Note that, in this case, $f$ has Fourier coefficients only of degrees zero and one.  It therefore seems sensible that, if $f$ has only Fourier coefficients of higher order, then $f$ will oscillate, so the perimeter of its level sets should be much larger than the volume of its level sets.  Kalai therefore asked if the constant $2$ in Theorem \ref{harper} would become large when a large number of Fourier coefficients of the function are zero.

\begin{question}\label{q2}
Suppose $k=k(n)\to\infty$ as $n\to\infty$.  Does there exist $\omega(k)>0$ such that $\omega(k)\to\infty$ as $k\to\infty$, such that the following statement holds?
Let $f\colon\{-1,1\}^{n}\to\{0,1\}$ with $\E fW_{S}=0$ for all $S\subset\{1,\ldots,n\}$ with $1\leq\abs{S}\leq k$.  Then $\sum_{i=1}^{n}I_{i}f\geq\omega(k)(\E f)\log(1/\E f)$.
\end{question}

A simplification of the function from Theorem \ref{thm30} shows that Question \ref{q2} has a negative answer.

 \begin{theorem}[Negative Answer to Question \ref{q2}]\label{thm31}
There exists $0<C,c<\infty$ such that, for infinitely many $n\in\N$, there exists $f\colon\{-1,1\}^{n}\to\{0,1\}$ with $\E fW_{S}=0$ for all $S\subset\{1,\ldots,n\}$ with $1\leq\abs{S}\leq c \log n$ such that $\sum_{i=1}^{n}I_{i}f\leq C(\E f)\log(1/\E f)$.
\end{theorem}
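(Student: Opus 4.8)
The plan is to take $f$ to be the indicator function of a well-chosen subgroup of $(\{-1,1\}^{n},\cdot)$, so that the vanishing of the low-degree Fourier coefficients of $f$ becomes a lower bound on the minimum distance of a dual code, while the bound on $\sum_{i}I_{i}f$ becomes an upper bound on the size of its support. For a subgroup $\mathcal{C}\subseteq\{-1,1\}^{n}$, set $\mathcal{C}^{\perp}\colonequals\{S\subseteq\{1,\dots,n\}:W_{S}(x)=1\text{ for all }x\in\mathcal{C}\}$; this is a subgroup of the group of subsets of $\{1,\dots,n\}$ under symmetric difference, with $\abs{\mathcal{C}}\cdot\abs{\mathcal{C}^{\perp}}=2^{n}$ and $(\mathcal{C}^{\perp})^{\perp}=\mathcal{C}$. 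Let $f\colonequals\mathbf{1}_{\mathcal{C}}\colon\{-1,1\}^{n}\to\{0,1\}$ and $\mu\colonequals 2^{-n}\abs{\mathcal{C}}$. Since each $W_{S}$ is a character of $\{-1,1\}^{n}$, the sum $\sum_{x\in\mathcal{C}}W_{S}(x)$ equals $\abs{\mathcal{C}}$ when $S\in\mathcal{C}^{\perp}$ and vanishes otherwise; hence $\E fW_{S}=\mu$ for $S\in\mathcal{C}^{\perp}$ and $\E fW_{S}=0$ for $S\notin\mathcal{C}^{\perp}$, and in particular $\E f=\mu=\abs{\mathcal{C}^{\perp}}^{-1}$. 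Consequently the requirement ``$\E fW_{S}=0$ whenever $1\le\abs{S}\le c\log n$'' is precisely the requirement that every nonempty member of $\mathcal{C}^{\perp}$ have size $>c\log n$, i.e.\ that $\mathcal{C}^{\perp}$, viewed as a binary linear code, have minimum distance $>c\log n$.

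Next we compute the influences. Let $\epsilon_{i}\in\{-1,1\}^{n}$ be the vector with $-1$ in coordinate $i$ and $1$ elsewhere, so $W_{S}(\epsilon_{i})=-1$ if $i\in S$ and $W_{S}(\epsilon_{i})=1$ otherwise; from $(\mathcal{C}^{\perp})^{\perp}=\mathcal{C}$ it follows that $\epsilon_{i}\in\mathcal{C}$ if and only if $i\notin\mathrm{supp}(\mathcal{C}^{\perp})\colonequals\bigcup_{S\in\mathcal{C}^{\perp}}S$. If $\epsilon_{i}\in\mathcal{C}$ then $f(x\epsilon_{i})=f(x)$ for all $x$, so $I_{i}f=0$; if $\epsilon_{i}\notin\mathcal{C}$ then $\mathcal{C}$ and $\mathcal{C}\epsilon_{i}$ are disjoint cosets of $\mathcal{C}$, and $f(x)\neq f(x\epsilon_{i})$ exactly when $x$ lies in one of these two cosets, whence $I_{i}f=2\mu$. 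Therefore $\sum_{i=1}^{n}I_{i}f=2\mu\,\abs{\mathrm{supp}(\mathcal{C}^{\perp})}$, while $(\E f)\log(1/\E f)=\mu\log\abs{\mathcal{C}^{\perp}}$, so the desired conclusion $\sum_{i}I_{i}f\le C(\E f)\log(1/\E f)$ reduces to the purely combinatorial inequality
\[
2\,\abs{\mathrm{supp}(\mathcal{C}^{\perp})}\ \le\ C\,\log\abs{\mathcal{C}^{\perp}}.
\]

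It remains to produce a code $\mathcal{C}^{\perp}$ with large minimum distance that is also short. Put $k\colonequals\lfloor c\log n\rfloor$. By the Gilbert--Varshamov bound there exists, for every sufficiently large $k$, a binary linear code of length $10k$, dimension $k$, and minimum distance at least $k+1$: the bound applies because the Hamming-ball volume $\sum_{j<k}\binom{10k-1}{j}$ is at most $2^{10k\,H(1/10)}<2^{9k}$, where $H$ denotes the binary entropy function. Embed a copy of this code into $\{-1,1\}^{n}$ by placing it on any $10k$ of the coordinates $\{1,\dots,n\}$ --- possible once $c\le 1/10$, since then $10k\le 10c\log n\le n$ --- and call the resulting subgroup $\mathcal{C}^{\perp}$. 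Then $\abs{\mathcal{C}^{\perp}}=2^{k}$, $\abs{\mathrm{supp}(\mathcal{C}^{\perp})}\le 10k$, and every nonempty member of $\mathcal{C}^{\perp}$ has size $>k$. Taking $\mathcal{C}\colonequals(\mathcal{C}^{\perp})^{\perp}$ and $f\colonequals\mathbf{1}_{\mathcal{C}}$ yields $\E f=2^{-k}>0$, the required vanishing of $\E fW_{S}$ for $1\le\abs{S}\le k$, and $\sum_{i}I_{i}f\le 20k\cdot 2^{-k}=(20/\log 2)(\E f)\log(1/\E f)$. So the theorem holds with, say, $C=29$ and any fixed $c\in(0,1/10]$, for all large $n$, hence for infinitely many $n$.

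I do not foresee a genuine obstacle; the only delicate point is the simultaneous feasibility of the two constraints on $\mathcal{C}^{\perp}$. The Singleton bound gives $\dim\le(\text{length})-(\text{distance})+1$, so the support must exceed the distance by at least a constant factor, and it is exactly in the Gilbert--Varshamov regime --- length, dimension, and distance all comparable --- that $\abs{\mathrm{supp}(\mathcal{C}^{\perp})}=\Theta(\log\abs{\mathcal{C}^{\perp}})$, which is what makes the comparison with $(\E f)\log(1/\E f)$ go through. This is the ``simplification'' referred to after Theorem~\ref{thm30}: here $f$ is permitted to be extremely biased --- the indicator of a single coset of a code --- which trivializes the influence estimate, whereas the mean-zero requirement of Theorem~\ref{thm30} rules out coset indicators and forces a more delicate construction. (If one prefers an explicit code, a BCH code of designed distance $\Theta(\log n)$ works as well, at the cost of enlarging the support to $\Theta(\log n\cdot\log\log n)$ --- still $\Theta(\log\abs{\mathcal{C}^{\perp}})$ --- and worsening the constant $C$.)
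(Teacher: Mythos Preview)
Your proof is correct and is essentially the same as the paper's: both take $f$ to be the indicator of a linear code $\mathcal{C}$ whose dual has minimum distance $\Omega(\log n)$ and length (support) comparable to its dimension, so that $\sum_i I_i f = 2\mu\,|\mathrm{supp}(\mathcal{C}^\perp)|$ is within a constant of $(\E f)\log(1/\E f)=\mu\log|\mathcal{C}^\perp|$. The only cosmetic difference is packaging---the paper routes the construction through its Propositions~\ref{prop:general_tribes}, \ref{prop:MacWilliams}, \ref{prop:tribek} (set up for the tribes construction of Theorem~\ref{thm30}) and then specializes to a single block $b=1$, whereas you go directly to the code indicator, compute the influences by hand, and invoke Gilbert--Varshamov to get explicit constants.
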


A less trivial argument
%***REMOVED using Theorem~\ref{thm20}
allows to extend Talagrand's theorem to tail spaces.
\begin{theorem}[\embolden{Talagrand Inequality for Tail Space}]\label{thm4}  Let $k\geq1$.  Let $f\colon\{-1,1\}^{n}\to\R$ with $\E f W_{S}=0$ for all $S\subset\{1,\ldots,n\}$ with $\abs{S}<k$.  For $i=1,\ldots,n$, define $D_{i}f(x)\colonequals [f(x_{1},\ldots,x_{n})-f(x_{1},\ldots,x_{i-1},-x_{i},x_{i+1},\ldots,x_{n})]/2$.  Then
\begin{equation}\label{two44}
%\begin{aligned}
%&f\in L_{p}^{\geq k}(\gamma_{n})\quad\Longrightarrow\\
%&\quad\int\abs{f}^{2}d\gamma_{n}
%\leq\sum_{i=1}^{n}\vnorm{\partial f/\partial x_{i}}_{L_{2}(\gamma_{n})}^{2}\int_{0}^{1}2u^{1+k}
%\left(\frac{\vnorm{\partial f/\partial x_{i}}_{L_{1}(\gamma_{n})}}{\vnorm{\partial f/\partial x_{i}}_{L_{2}(\gamma_{n})}}\right)^{1-u}du.
%\end{aligned}
\begin{aligned}
\E f^{2}
\leq \frac{3}{k}\sum_{i=1}^{n} \E(D_{i}f)^{2}/\max(1, \log(\| D_{i}f\|_{2}/\|D_{i}f\|_{1+e^{-2/k}})) \leq8\sum_{i=1}^{n} \frac{\E (D_{i}f)^{2}}{k+\log \left( \| D_{i}f\|_{2}/\| D_{i}f\|_{1}\right)}.
\end{aligned}
\end{equation}
\end{theorem}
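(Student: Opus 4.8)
The hypothesis already includes $S=\emptyset$, so $\E f=0$ and $\widehat f$ is supported on Fourier levels $\geq k$. I will use throughout the elementary identities (valid for any $g\colon\{-1,1\}^{n}\to\R$) $\E(D_{i}g)^{2}=\sum_{S\ni i}\widehat{g}(S)^{2}$ and $-\tfrac{d}{dt}\|P_{t}g\|_{2}^{2}=2\sum_{i=1}^{n}\E(D_{i}P_{t}g)^{2}$, together with $D_{i}P_{t}=P_{t}D_{i}$. The whole argument consists of evolving $f$ under the heat semigroup for the single time $1/k$ and estimating the resulting energy dissipation from two sides. First I reduce the left side of \eqref{two44} to influences of the \emph{smoothed} function: since $\widehat f$ lives on levels $\geq k$, $\|P_{1/k}f\|_{2}^{2}=\sum_{|S|\geq k}e^{-2|S|/k}\widehat f(S)^{2}\leq e^{-2}\E f^{2}$, so integrating the dissipation identity over $t\in[0,1/k]$ gives
\begin{equation*}
(1-e^{-2})\,\E f^{2}\ \leq\ \E f^{2}-\|P_{1/k}f\|_{2}^{2}\ =\ 2\sum_{i=1}^{n}\int_{0}^{1/k}\|P_{t}D_{i}f\|_{2}^{2}\,dt .
\end{equation*}
Thus it suffices to bound each $\int_{0}^{1/k}\|P_{t}D_{i}f\|_{2}^{2}\,dt$ by a constant multiple of $\E(D_{i}f)^{2}\big/\big(k\max(1,\log(b_{i}/c_{i}))\big)$, where $b_{i}\colonequals\|D_{i}f\|_{2}$ and $c_{i}\colonequals\|D_{i}f\|_{1+e^{-2/k}}\leq b_{i}$.

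For this I use two estimates. One: $P_{t}$ contracts $L^{2}$, so $\int_{0}^{1/k}\|P_{t}D_{i}f\|_{2}^{2}\,dt\leq b_{i}^{2}/k$. Two: by the hypercontractive inequality $\|P_{t}D_{i}f\|_{2}\leq\|D_{i}f\|_{1+e^{-2t}}$, and on $t\in[0,1/k]$ the exponent $1+e^{-2t}$ interpolates between $1+e^{-2/k}$ and $2$; log-convexity of $L^{p}$-norms gives $\|D_{i}f\|_{1+e^{-2t}}\leq b_{i}^{1-\mu(t)}c_{i}^{\mu(t)}$, where $\mu(t)$ solves $\tfrac{1}{1+e^{-2t}}=\tfrac{1-\mu(t)}{2}+\tfrac{\mu(t)}{1+e^{-2/k}}$, i.e.\ $\mu(t)=\tanh(t)/\tanh(1/k)$. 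Since $s\mapsto\tanh(s)/s$ is decreasing on $(0,\infty)$, $\mu(t)\geq kt$ on $[0,1/k]$, whence $\|P_{t}D_{i}f\|_{2}^{2}\leq b_{i}^{2}e^{-2kt\log(b_{i}/c_{i})}$ and $\int_{0}^{1/k}\|P_{t}D_{i}f\|_{2}^{2}\,dt\leq b_{i}^{2}\big/\big(2k\log(b_{i}/c_{i})\big)$. Taking the smaller of the two estimates and using $\min(1,\tfrac{1}{2L})\leq\tfrac{1}{\max(1,L)}$ for $L\geq0$ yields $\int_{0}^{1/k}\|P_{t}D_{i}f\|_{2}^{2}\,dt\leq b_{i}^{2}\big/\big(k\max(1,\log(b_{i}/c_{i}))\big)$; feeding this into the display above proves the first inequality of \eqref{two44} with constant $2/(1-e^{-2})<3$.

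The second inequality of \eqref{two44} involves no reference to $f$: it is the termwise bound $\tfrac{3}{k\max(1,\log(b_{i}/c_{i}))}\leq\tfrac{8}{k+\log(b_{i}/\|D_{i}f\|_{1})}$. Writing $a_{i}\colonequals\|D_{i}f\|_{1}\leq c_{i}\leq b_{i}$ and applying log-convexity of $p\mapsto\log\|D_{i}f\|_{p}$ at the three exponents $1,\ 1+e^{-2/k},\ 2$ gives $\log(c_{i}/a_{i})\leq\tfrac{2}{e^{2/k}-1}\log(b_{i}/c_{i})$, and $\tfrac{2}{e^{2/k}-1}\leq k-\tfrac12$ for $k\geq1$ by an elementary estimate on $e^{2/k}-1$. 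Hence $k+\log(b_{i}/a_{i})\leq\big(2k+\tfrac12\big)\max(1,\log(b_{i}/c_{i}))\leq\tfrac{8k}{3}\max(1,\log(b_{i}/c_{i}))$, which is exactly the claim; summing over $i$ finishes the proof.

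The main point to note is that none of the ingredients — $L^{2}$ decay of the heat flow on a tail space, hypercontractivity, log-convexity of $L^{p}$-norms, the dissipation identity — is deep on its own. The one genuine idea is to run the semigroup for time \emph{exactly} $1/k$: this simultaneously turns the tail hypothesis into a fixed-ratio $L^{2}$ contraction and matches the hypercontractive exponent to the $1+e^{-2/k}$ that appears in the statement. What keeps all the constants absolute is the clean identity $\mu(t)=\tanh(t)/\tanh(1/k)$ together with monotonicity of $\tanh(s)/s$, which makes the interpolation estimate in the second step go through uniformly; the only tedious step is the constant-tracking in the last paragraph.
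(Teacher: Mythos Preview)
Your proof is correct and follows essentially the same route as the paper's: run the semigroup for time $1/k$, use the tail hypothesis to get $(1-e^{-2})\E f^{2}\leq 2\sum_{i}\int_{0}^{1/k}\|P_{t}D_{i}f\|_{2}^{2}\,dt$, apply hypercontractivity together with the H\"older interpolation $\mu(t)=\tanh t/\tanh(1/k)\geq kt$, and for the second inequality use the same H\"older bound at exponents $1,\,1+e^{-2/k},\,2$. The only differences are cosmetic constant-tracking (the paper phrases the last step as $k\tanh(1/k)\geq\tanh(1)$ rather than your $2/(e^{2/k}-1)\leq k-\tfrac12$, which are equivalent rearrangements of the same inequality).
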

Note that the usual form of Talagrand's inequality is obtained by setting $k=1$ and substituting $f-\E f$ in place of $f$ on the left side of \eqref{two44} (which is redundant for $k\geq1$).

We note that while in principle, Theorem \ref{thm4} may indicate that the answer to Question \ref{q1} is positive, since the usual Talagrand Inequality implies the Kahn-Kalai-Linial Theorem \ref{kkl}.  However, the improvement of Theorem \ref{thm4} over Theorem \ref{kkl} only occurs for $k$ of the form $k=g(n)\log n$ where $g(n)\to\infty$ as $n\to\infty$.
%Yet, for $k=g(n)\log n$, Theorem \ref{kkl} can be improved simply from the formula $\sum_{i=1}^{n}I_{i}f=\sum_{S\subset\{1,\ldots,n\}}\abs{S}\absf{\widehat{f}(S)}^{2}$.  Then we immediately get $\max_{i=1,\ldots,n}I_{i}f\geq g(n)(\log n)/n$, so that Question \ref{q1} is true.  It is therefore surprising that, for $k=\log n$, Question \ref{q1} does not hold, as we now show.

\subsection{General Setting}
Theorem~\ref{main_boolean} and Theorem~\ref{main2_boolean} are proven in the following general setting:
\begin{equation}\label{ten0}
\begin{cases}
&\bullet \,\,(\Omega, 2^{\Omega}, \mu)\mbox{ is a finite probability space.}\\
&\bullet \,\,(P_{t})_{t \geq 0}\mbox{ is a symmetric Markov semigroup on }L^{2}(\Omega, \mu)\\
&\bullet \,\,(P_{t})_{t \geq 0}\mbox{ has generator }L=-\frac{\mathrm{d}}{\mathrm{d} t}P_{t}\Big|_{t=0^{+}}.\\
&\bullet \,\,\E\mbox{ denotes the expectation with respect to the invariant measure }\mu.\\
&\bullet \,\,\| f\|_{p}\mbox{ will stand for }\left(\E |f|^{p}\right)^{1/p}.
\end{cases}
\end{equation}
We assume additionally that $L$ satisfies the Poincar\'e inequality with a positive constant $C$, i.e.
\begin{equation}\label{ten1}
\E f^2- (\E f)^{2} \leq C \cdot\E fLf,
\end{equation}
for every $f:\Omega \rightarrow \R$, or equivalently,
$\E (P_{t}f)^{2} \leq e^{-2t/C}\cdot \E f^{2}$ for every $t \geq 0$ and every mean-zero $f$.
Theorem \ref{main_boolean} is a special case of the following theorem:
\begin{theorem}[\embolden{Heat Smoothing}]\label{main}
Assume that \eqref{ten0} and \eqref{ten1} hold.  Then for every $p \in (1,\infty)$ and every $f: \Omega \rightarrow \R$ with $\E f=0$, for every $t>0$,
$$
\vnormf{P_{t}f}_{p} \leq \exp\left(-\frac{(2p-2)t}{(p^{2}-2p+2)C}\right)\cdot \|f\|_{p}.
$$
\end{theorem}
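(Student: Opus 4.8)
The plan is to derive Theorem~\ref{main} from its stationary counterpart, the $L^{p}$-Poincar\'e inequality: for every $g:\Omega\to\R$ with $\E g=0$,
\[
\E\abs{g}^{p-1}\sign(g)\,Lg\ \geq\ \frac{2p-2}{(p^{2}-2p+2)\,C}\,\E\abs{g}^{p}.
\]
The reduction is routine. Set $\phi(t)\colonequals\vnormf{P_{t}f}_{p}^{p}$; since $\Omega$ is finite, $t\mapsto P_{t}f$ is real-analytic and $x\mapsto\abs{x}^{p}$ is $C^{1}$ for $p>1$, so $\phi$ is differentiable with $\phi'(t)=-p\,\E\abs{P_{t}f}^{p-1}\sign(P_{t}f)\,LP_{t}f$ (the chain rule is valid everywhere, the contribution of $\{P_{t}f=0\}$ being $0$). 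As $P_{t}f$ has mean zero, the displayed inequality applied to $g=P_{t}f$ gives $\phi'(t)\leq-p\lambda\,\phi(t)$ with $\lambda=\frac{2p-2}{(p^{2}-2p+2)C}$, and integrating yields $\phi(t)\leq e^{-p\lambda t}\phi(0)$, which is exactly the asserted bound.

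To prove the stationary inequality I would combine three ingredients. First, since $(P_{t})$ is a symmetric Markov semigroup on the finite space $\Omega$, $L$ is the negative generator of a reversible continuous-time Markov chain with nonnegative jump rates $q(x,y)$, so that $\E uLv=\tfrac12\sum_{x,y}\mu(x)q(x,y)(u(x)-u(y))(v(x)-v(y))$ for all $u,v$. Second, the elementary pointwise inequality
\[
(a-b)\big(\abs{a}^{p-2}a-\abs{b}^{p-2}b\big)\ \geq\ \frac{4(p-1)}{p^{2}}\big(\sign(a)\abs{a}^{p/2}-\sign(b)\abs{b}^{p/2}\big)^{2}\qquad(a,b\in\R),
\]
which by homogeneity reduces to the one-variable cases $b\in\{-1,0,1\}$ and is an equality iff $p=2$ or $a=b$. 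Writing $\sigma\colonequals\sign(g)\abs{g}^{p/2}$, these two facts yield the Stroock--Varopoulos-type estimate $\E\abs{g}^{p-1}\sign(g)\,Lg\geq\frac{4(p-1)}{p^{2}}\,\E\sigma L\sigma$. Third, apply the ordinary ($p=2$) Poincar\'e inequality \eqref{ten1} to $\sigma$: $\E\sigma L\sigma\geq\frac1C\big(\E\sigma^{2}-(\E\sigma)^{2}\big)=\frac1C\big(\E\abs{g}^{p}-(\E\sigma)^{2}\big)$.

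It then suffices to show that the mean-zero hypothesis forces
\[
(\E\sigma)^{2}\ =\ \big(\E\sign(g)\abs{g}^{p/2}\big)^{2}\ \leq\ \frac{(p-2)^{2}}{2(p^{2}-2p+2)}\,\E\abs{g}^{p}.
\]
Indeed, granting this and using the identity $2(p^{2}-2p+2)-(p-2)^{2}=p^{2}$, substitution gives $\E\abs{g}^{p-1}\sign(g)Lg\geq\frac{4(p-1)}{p^{2}C}\cdot\frac{p^{2}}{2(p^{2}-2p+2)}\,\E\abs{g}^{p}=\frac{2p-2}{(p^{2}-2p+2)C}\,\E\abs{g}^{p}$, which is the stationary inequality. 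To establish the last display I would split $\E\sign(g)\abs{g}^{p/2}=\E\mathbf{1}_{\{g>0\}}g^{p/2}-\E\mathbf{1}_{\{g<0\}}\abs{g}^{p/2}$ and, using that $\E\mathbf{1}_{\{g>0\}}g=\E\mathbf{1}_{\{g<0\}}\abs{g}$ (here is where $\E g=0$ enters), bound each term by H\"older's inequality interpolating the $L^{1}$-mass against the $L^{p}$-mass over $\{g>0\}$ and $\{g<0\}$ respectively; the probabilities $\P(g>0),\P(g<0)$ impose a further constraint relating $\E\mathbf{1}_{\{g>0\}}\abs{g}^{p}$ and $\E\mathbf{1}_{\{g<0\}}\abs{g}^{p}$, after which the statement collapses to a one-variable inequality checkable by calculus. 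For $1<p<2$ the conjugate-exponent symmetry (the constant is invariant under $p\mapsto p/(p-1)$) or a parallel computation covers the case.

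The main obstacle is precisely this scalar inequality: extracting from $\E g=0$ a bound on $(\E\sign(g)\abs{g}^{p/2})^{2}$ by $\E\abs{g}^{p}$. The difficulty is that the mean-zero condition is an $L^{1}$ statement while the conclusion involves $L^{p/2}$ and $L^{p}$ quantities, so one cannot avoid arguing separately on $\{g>0\}$ and $\{g<0\}$; and, crucially, one must do this with the \emph{sharp} constant $\frac{(p-2)^{2}}{2(p^{2}-2p+2)}$, since only that value causes the Stroock--Varopoulos loss $\frac{4(p-1)}{p^{2}}$ and the spectral-gap gain $\frac1C$ to compose into the clean factor $\frac{2p-2}{(p^{2}-2p+2)C}$.
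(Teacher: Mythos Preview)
Your proposal is correct and follows essentially the same route as the paper: reduce to the stationary $L^p$-Poincar\'e inequality by differentiating $\|P_tf\|_p^p$, apply Stroock--Varopoulos to pass to $\E\sigma L\sigma$ with $\sigma=\phi_{p/2}(g)$, invoke the $L^2$ Poincar\'e inequality \eqref{ten1}, and then control $(\E\sigma)^2$ by the sharp fraction $\frac{(p-2)^2}{2(p^2-2p+2)}\,\E|g|^p$. The paper proves this last scalar inequality exactly along the lines you sketch---H\"older/Jensen on $\{g>0\}$ and $\{g<0\}$ separately, using $\E g_+=\E g_-$ and the probabilities $\P(g>0),\P(g<0)$ (Lemma~\ref{lemma10}), followed by a one-variable algebraic inequality (Lemma~\ref{lemma11}); your remark that a parallel computation handles $1<p<2$ is also what the paper does.
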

%*** Added a comment about Cattiaux's proof

Theorem~\ref{main2_boolean} is a special case of the following result.
\begin{theorem}[\embolden{Poincar\'{e} Inequality}]\label{main2}
Assume that \eqref{ten0} and \eqref{ten1} hold.  Then for every $p \in (1,\infty)$ and every $f\colon \Omega \rightarrow \R$ with $\E f=0$ there is
$$
\E \abs{f}^{p-1}\sign(f)Lf\geq\frac{2p-2}{(p^{2}-2p+2)C}\cdot\E\abs{f}^{p}.
$$
\end{theorem}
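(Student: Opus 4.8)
The plan is to derive Theorem~\ref{main2} directly from Theorem~\ref{main} by differentiating at $t=0$. Fix $f\colon\Omega\to\R$ with $\E f=0$ and put $u(t)\colonequals\vnorm{P_{t}f}_{p}^{p}=\E\absf{P_{t}f}^{p}$. For $p>1$ the map $y\mapsto\absf{y}^{p}$ is $C^{1}$ with derivative $p\absf{y}^{p-1}\sign(y)$ (with the convention $\absf{0}^{p-1}\sign(0)=0$), so $u$ is differentiable and
\[
u'(t)=p\,\E\absf{P_{t}f}^{p-1}\sign(P_{t}f)\,\tfrac{\mathrm{d}}{\mathrm{d}t}P_{t}f=-p\,\E\absf{P_{t}f}^{p-1}\sign(P_{t}f)\,LP_{t}f .
\]
Theorem~\ref{main} gives $u(t)\le e^{-\frac{2p(p-1)}{(p^{2}-2p+2)C}t}u(0)$ for all $t\ge0$, with equality at $t=0$, so $u'(0^{+})\le-\tfrac{2p(p-1)}{(p^{2}-2p+2)C}u(0)$; dividing by $-p$ is exactly Theorem~\ref{main2}. (Conversely, applying Theorem~\ref{main2} to each mean-zero $P_{t}f$ gives $u'(t)\le-\tfrac{2p(p-1)}{(p^{2}-2p+2)C}u(t)$, and Gr\"{o}nwall returns Theorem~\ref{main}; the two statements are equivalent and it suffices to establish either.)

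Since Theorem~\ref{main} is the genuinely new input, I also sketch how one would prove it directly, via the equivalent ``instantaneous'' inequality $\mathcal{E}(f,\absf{f}^{p-1}\sign(f))\ge\tfrac{2(p-1)}{(p^{2}-2p+2)C}\vnorm{f}_{p}^{p}$, where $\mathcal{E}(g,h)\colonequals\E g\,Lh$. Normalize $C=1$. Since $(P_{t})$ is a symmetric Markov semigroup on a finite space, $\mathcal{E}(g,h)=\tfrac12\sum_{x,y}\mu(x)q(x,y)(g(x)-g(y))(h(x)-h(y))$ with $q\ge0$ symmetric with respect to $\mu$. The two natural ingredients are the elementary Stroock--Varopoulos bound
\[
(a-b)\bigl(\absf{a}^{p-1}\sign(a)-\absf{b}^{p-1}\sign(b)\bigr)\ \ge\ \tfrac{4(p-1)}{p^{2}}\bigl(\psi(a)-\psi(b)\bigr)^{2},\qquad\psi(y)\colonequals\absf{y}^{p/2}\sign(y)
\]
(proved by Cauchy--Schwarz; the case of $a,b$ of opposite signs is handled separately), which summed against $q$ yields $\mathcal{E}(f,\absf{f}^{p-1}\sign(f))\ge\tfrac{4(p-1)}{p^{2}}\mathcal{E}(\psi(f),\psi(f))$; and the $L^{2}$ Poincar\'e inequality \eqref{ten1} applied to $\psi(f)$, which (since $\E\psi(f)^{2}=\vnorm{f}_{p}^{p}$) gives $\mathcal{E}(\psi(f),\psi(f))\ge\vnorm{f}_{p}^{p}-(\E\psi(f))^{2}$.

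The hard part is the sharp constant. Combining these two ingredients as written only produces $\tfrac{4(p-1)}{p^{2}}\bigl(\vnorm{f}_{p}^{p}-(\E\psi(f))^{2}\bigr)$: although $p^{2}-2p+2\ge p^{2}/2$ makes the target $\tfrac{2(p-1)}{p^{2}-2p+2}$ no bigger than $\tfrac{4(p-1)}{p^{2}}$, the zeroth-order term $(\E\psi(f))^{2}=(\E\absf{f}^{p/2}\sign(f))^{2}$ can be a fixed positive fraction of $\vnorm{f}_{p}^{p}$ even under $\E f=0$, so this step is lossy and the mean-zero hypothesis must be used more efficiently (this is also why hypercontractivity does not suffice). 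My plan for that step is to split $f=f_{+}-f_{-}$ into positive and negative parts, so that $\E f_{+}=\E f_{-}$ and $\absf{f}^{p-1}\sign(f)=f_{+}^{p-1}-f_{-}^{p-1}$, and to expand
\[
\mathcal{E}\bigl(f,\absf{f}^{p-1}\sign(f)\bigr)=\mathcal{E}(f_{+},f_{+}^{p-1})+\mathcal{E}(f_{-},f_{-}^{p-1})-\mathcal{E}(f_{+},f_{-}^{p-1})-\mathcal{E}(f_{-},f_{+}^{p-1}),
\]
keeping the last two cross terms, which are nonpositive (they are supported on edges joining $\{f>0\}$ to $\{f<0\}$) and hence help the lower bound; one applies Stroock--Varopoulos and \eqref{ten1} to the diagonal terms, bounds $(\E f_{+}^{p/2})^{2}\le\P(f>0)\,\E f_{+}^{p}$ and $(\E f_{-}^{p/2})^{2}\le\P(f<0)\,\E f_{-}^{p}$ by Cauchy--Schwarz, and uses $\E f_{+}=\E f_{-}$ to control the remaining quantities, so that the estimate collapses to a one-variable optimization in the masses and conditional moments of $f_{\pm}$. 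That optimization should return precisely $\tfrac{2(p-1)}{p^{2}-2p+2}$; as a consistency check, $\tfrac{p^{2}-2p+2}{2(p-1)}=\tfrac{p-1}{2}+\tfrac1{2(p-1)}$ is invariant under $p\leftrightarrow p/(p-1)$, reflecting the self-duality of Theorem~\ref{main2} under conjugate exponents and permitting a reduction to $p\ge2$. Only this last extraction of the optimal constant is delicate; the equivalence and the Dirichlet-form/Stroock--Varopoulos reduction are routine.
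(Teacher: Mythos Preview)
Your overall architecture matches the paper's exactly: apply the Stroock--Varopoulos inequality pointwise on the Dirichlet form to get $\mathcal{E}(f,\phi_{p-1}(f))\ge\tfrac{4(p-1)}{p^2}\mathcal{E}(\phi_{p/2}(f),\phi_{p/2}(f))$, then apply the $L^2$ Poincar\'e inequality \eqref{ten1} to $\phi_{p/2}(f)$, reducing everything to the control of $(\E\phi_{p/2}(f))^2=(\E f_+^{p/2}-\E f_-^{p/2})^2$. You correctly identify this last step as the hard one. (A small structural point: the paper proves Theorem~\ref{main2} first and then derives Theorem~\ref{main} by Gr\"onwall, not the other way around; your opening ``differentiate Theorem~\ref{main}'' move is circular within this paper.)

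The genuine gap is in how you propose to close that last step. Your plan---splitting $\mathcal{E}(f,\phi_{p-1}(f))$ into four pieces, applying SV$+$Poincar\'e to the diagonal pieces, retaining the nonnegative cross pieces, and then ``collapsing to a one-variable optimization''---mixes Dirichlet-form quantities (the cross terms) with purely distributional ones, and you give no mechanism for evaluating or bounding the cross terms in the optimization; dropping them (which is all their sign allows) already loses information. The paper avoids this entirely by \emph{not} splitting the Dirichlet form: after SV$+$Poincar\'e the only remaining task is the purely distributional inequality
\[
\bigl(\E X_+^{p/2}-\E X_-^{p/2}\bigr)^2\ \le\ \Bigl(1-\tfrac{p^2}{2(p^2-2p+2)}\Bigr)\,\E|X|^p
\]
for an arbitrary mean-zero real random variable $X$ (Lemma~\ref{lemma12}). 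This is proved in two steps: first, H\"older/Jensen with carefully chosen exponents (separately for $p>2$ and $1<p<2$) yields the four-term bound of Lemma~\ref{lemma10}; second, the elementary two-variable inequality $(a-b)^2\le\tfrac{(p-2)^2}{2(p^2-2p+2)}\bigl(a^2+b^2+a^{\frac{2}{2-p}}b^{\frac{2p-2}{p-2}}+b^{\frac{2}{2-p}}a^{\frac{2p-2}{p-2}}\bigr)$ of Lemma~\ref{lemma11} (proved by comparing $e^{sx}+e^{-sx}$ with $|s|(e^x-e^{-x})$ for $s=p/(p-2)$) converts that into the desired bound. This is where the constant $\tfrac{2(p-1)}{p^2-2p+2}$ actually appears; your Cauchy--Schwarz bound $(\E f_+^{p/2})^2\le\P(f>0)\E f_+^p$ is too crude to produce it.
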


Theorem \ref{main2} is equivalent to Theorem \ref{main}.  Below, we will first prove Theorem \ref{main2} and then deduce Theorem \ref{main} as a consequence.

For $\Omega=\R$, $p=4$ and $d\mu=e^{-x^{2}/2}dx/\sqrt{2\pi}$, with $L$ being the generator of the Ornstein-Uhlenbeck semigroup, Theorem \ref{main2} was proven by P. Cattiaux, as noted in \cite{mendel12}.

After the proof of Theorem~\ref{main} we briefly discuss how Theorem~\ref{main} and Theorem~\ref{main2} can be extended to infinite spaces.

% IGNORE FOLLOWING PART ???

\ignore{
The original motivation for Theorem \ref{main} was a simplified construction of expander graphs which have a spectral gap with respect to any uniformly convex Banach space (or more generally for any $K$-convex space) \cite{mendel12}.  A Banach space $X$ is said to be $K$-convex if there exists $\epsilon_{0}>0$ and $n_{0}\in\N$ such that any embedding of $\ell_{1}^{n_{0}}$ into $X$ incurs bi-Lipschitz distortion at least $1+\epsilon_{0}$.  The following theorem is the original motivation for Theorem \ref{main} and the main result of \cite{mendel12}, which strengthens and clarifies previous constructions of expander graphs.
\begin{theorem}\label{thmz}\cite[Theorem 1.1]{mendel12}
There exists a sequence of $3$-regular graphs $\{G_{n}\}_{n\in\N}=\{(V_{n},E_{n})\}_{n\in\N}$ with $\lim_{n\to\infty}\abs{V_{n}}=\infty$ such that, for any $K$-convex Banach space $X$, there exists $\gamma(X,\vnormf{\cdot}_{X}^{2})>0$ such that, for any $F\colon V_{n}\to X$,
$$\frac{1}{\abs{V_{n}}^{2}}\sum_{(u,v)\in V_{n}\times V_{n}}\vnorm{F(u)-F(v)}_{X}^{2}
\leq\frac{\gamma(X,\vnormf{\cdot}_{X}^{2})}{3\abs{V_{n}}}\sum_{(x,y)\in E_{n}}\vnorm{F(x)-F(y)}_{X}^{2}.$$
\end{theorem}

To prove Theorem \ref{thmz}, the authors proved an infinite family of estimates for functions $f\colon\{-1,1\}^{n}\to X$ where $X$ is a $K$-convex Banach space.  We now describe this family of inequalities, which will lead us to Theorem \ref{main}.  Any $f\colon\{-1,1\}^{n}\to X$ can be written as $f=\sum_{S\subset\{1,\ldots,n\}}\widehat{f}(S)W_{S}$, where for all $x=(x_{1},\ldots,x_{n})\in\{-1,1\}^{n}$, $W_{S}(x)\colonequals\prod_{i\in S}x_{i}$ and $\widehat{f}(S)\colonequals 2^{-n}\sum_{x\in\{-1,1\}^{n}}f(x)W_{S}(x)$.  For any $t\geq0$, $p>1$, define $P_{t}f\colonequals\sum_{S\subset\{1,\ldots,n\}}e^{-t\abs{S}}\widehat{f}(S)W_{S}$, $\vnorm{f}_{p}\colonequals(2^{-n}\sum_{x\in\{-1,1\}^{n}}\vnorm{f(x)}_{X}^{p})^{1/p}$.  Define $$\mathrm{Rad}(f)\colonequals\sum_{j=1}^{n}\widehat{f}(\{j\})W_{\{j\}},\qquad
K(X)\colonequals\sup_{n\in\N}\vnorm{\mathrm{Rad}}_{L_{2}(\{-1,1\}^{n},X)\to L_{2}(\{-1,1\}^{n},X)}.$$
The operator $\mathrm{Rad}(f)$ is known as the Rademacher projection, and $K(X)$ is referred to as the $K$-convexity constant of $X$.  In particular, $X$ is $K$-convex if and only if $K(X)<\infty$, as proven by Pisier \cite[Theorem 2.1]{pisier82}.

By creating a quantitative proof of a deep theorem of Pisier's \cite[Theorem 1.2]{pisier82} concerning the holomorphic extension of the semigroup $P_{t}$, Mendel and Naor proved the following decay estimate for the semigroup $P_{t}$.
\begin{theorem}\cite[Theorem 5.1]{mendel12}\label{thm100}
For every $K,p>1$, there exist $A(K,p)\in(0,1)$ and $B(K,p),C(K,p)>2$ such that, for every $K$-convex Banach space $(X,\vnorm{\cdot}_{X})$ with $K(X)\leq K$, for every $k,n\in\N$, for every $t>0$ and for every $f\colon\{-1,1\}^{n}\to X$ with $\widehat{f}(S)=0$ for all $S\subset\{1,\ldots,n\}$ with $\abs{S}<k$,
\begin{equation}\label{zero8}
\vnorm{P_{t}f}_{p}\leq C(K,p)\cdot\exp\left(-k\cdot A(K,p)\cdot\min(t,t^{B(K,p)})\right).
\end{equation}
\end{theorem}
Theorem \ref{thm100} is the crucial ingredient in a construction of a single sequence of $3$-regular expander graphs, which simultaneously have a spectral gap with respect to all $K$-convex Banach spaces $X$ \cite[Theorem 1.1]{mendel12}.  However, it is conjectured that the term $\min(t,t^{B(K,p)})$ in \eqref{zero8} should be able to be improved to $t$, since this is possible when $p=2$ and $X=\ell_{2}$ \cite[Remark 5.12]{mendel12}.  If this conjecture were true, then the construction of the expander graphs of \cite[Theorem 1.1]{mendel12} would be simplified and improved.  However, even in the case $X=\R$, we do not know how to solve this conjecture.  So, in this work, we focus on $X=\R$ and $k=1$ in trying to improve Theorem \ref{thm100}.

Note that, even in the case $\Omega=\{-1,1\}^{n}$ with the metric induced from the $\ell_{1}$ norm, the doubling constant of $\Omega$ is unbounded as $n\to\infty$.   So, Theorem \ref{main} does not seem to be provable by ``transplantation'' techniques, as in \cite{duong02}.  Indeed, the use in the proof of Theorem \ref{thm100} of holomorphic extension of the semigroup $P_{t}$ avoids the difficulties inherent in ``transplantation'' techniques.

\subsection{Heat Smoothing}\label{heatsub}

Hino \cite{hino00} proved the following heat smoothing estimate, though his main concern was proving that the following statement had equivalent characterizations in terms of mixing properties of the semigroup.  The following result was also briefly mentioned at the end of the proof of Theorem 1 in \cite{meyer84} by P. A. Meyer, though his main concern was proving $L_{p}$ estimates for the Gaussian Riesz transform independent of the dimension $n$.

\begin{theorem}\cite[Theorem 3.6(ii)b]{hino00}\label{thm9}
Let $1<p<\infty$.  Then there exists $M,\delta>0$ such that, for any $f\colon\Omega\to\R$ with $\E f=0$, for any $t>0$,
$$\vnorm{P_{t}f}_{p}\leq Me^{-\delta t}\vnorm{f}_{p}$$
\end{theorem}
%$\exists$ $t>0$ and $\exists$ $K>0$ such that
%$$\sup_{\vnorm{f}_{p}\leq1}\vnorm{(P_{t}f-K)_{+}}_{p}<1.$$
%for every $\epsilon>0$, there exists  $t>0$ such that
%$$\inf_{\mu(B_{1})\geq\epsilon,\mu(B_{2})\geq\epsilon}\E 1_{B_{1}}P_{t}1_{B_{2}}>0.$$

For $p\geq2$ and $\Omega=\{-1,1\}^{n}$, Theorem \ref{thm9} was improved to the much stronger estimate of Mendel-Naor below, which they attributed to P. A. Meyer \cite{meyer84}.  For $S\subset\{1,\ldots,n\}$ and $x=(x_{1},\ldots,x_{n})\in\{-1,1\}^{n}$, define $W_{S}(x)\colonequals\prod_{i\in S}x_{i}$.
%However, the proof of Mendel-Naor, which uses H\"{o}lder's inequality and hypercontractivity,

\begin{theorem}\cite[Lemma 5.4]{mendel12}\label{thm20}
Let $2\leq p<\infty$.  Then there exists $c(p)>0$ such that the following holds.  Let $f\colon\{-1,1\}^{n}\to\R$ with $\E f W_{S}=0$ for all $\abs{S}<k$.  Then
$$
\forall\,t>0,\quad\vnormf{P_{t}f}_{p}\leq e^{-k\min(t,t^{2})c(p)}\vnorm{f}_{p}.
$$
$$
\vnormf{Lf}_{p}\geq c(p)\sqrt{k}\vnorm{f}_{p}.$$
\end{theorem}
The second inequality can be considered a ``higher-order'' Poincar\'{e} inequality, and it follows from the first by writing $f=\int_{0}^{\infty}e^{-tL}Lfdt$ and then applying the $L_{p}(\{-1,1\}^{n})$ triangle inequality.

Mendel-Naor asked in \cite[Remark 5.5]{mendel12} if Theorem \ref{thm20} could hold for $1<p<2$, and if the constants could be improved as in the following statement.
\begin{conj}[\embolden{Heat Smoothing}]\label{conj1}\cite[Remark 5.5]{mendel12}
Let $1<p<\infty$.  Let $f\colon\{-1,1\}^{n}\to\R$ with $\E f W_{S}=0$ for all $S\subset\{1,\ldots,n\}$ with $\abs{S}<k$.  Then
\begin{equation}\label{zero0}
\forall t>0,\quad\vnormf{P_{t}f}_{p}\leq e^{-tkc(p)}\vnorm{f}_{p}.
\end{equation}
\begin{equation}\label{zero0.1}
\vnorm{Lf}_{p}\geq c(p)k\vnorm{f}_{p}.
\end{equation}
\end{conj}
If Conjecture \ref{conj1} is true for functions with values in uniformly convex spaces, then it gives a simplified construction of the expander graphs which have a spectral gap with respect to all uniformly convex spaces \cite[Remark 7.5]{mendel12}.  We should mention that Mendel and Naor prove a version of Theorem \ref{thm20} for functions with values in uniformly convex spaces by proving a quantitative version of a deep theorem of Pisier.  Pisier's result \cite{pisier82} concerns the the $L_{p}$ boundedness of the semigroup for parameters $t$ in a sector in the right half of the complex plane, where $1<p<\infty$.  In light of the main result of \cite{hebisch04}, this approach of Mendel and Naor may be interpreted as necessary.  Moreover, any attempt to use a H\"{o}rmander multiplier theorem must necessarily fail \cite[Theorem 2]{garcia01}.

So, Theorems \ref{main} and \ref{main2} prove Conjecture \ref{conj1} for $k=1$ and for all $1<p<\infty$, in a more general setting.
%
%As a consequence of Theorem \ref{thm10}, we also recall that \eqref{zero0} holds for $k=1$ for all $1<p<\infty$, as shown in \cite{mendel12}.
%\begin{theorem}\label{thm11}
%Let $1<p<\infty$.
%$$
%f\in L_{p}^{\geq 1}(\gamma_{n})\quad\Longrightarrow\quad
%\forall\,t>0,\,\quad
%\vnormf{e^{-tL}f}_{L_{p}(\gamma_{n})}\leq e^{-t\frac{2(p-1)}{p^{2}-2p+2}}\vnorm{f}_{L_{p}(\gamma_{n})}.
%$$
%\end{theorem}

If we restrict to $\{-1,0,1\}$-valued functions, then \eqref{zero0} always holds

\begin{theorem}[\embolden{Conjecture \ref{conj1} for $\{-1,0,1\}$-valued functions}]\label{thm13}
Let $1<p<\infty$ and let $f\colon\{-1,1\}^{n}\to\{-1,0,1\}$ with $\E f W_{S}=0$ for all $S\subset\{1,\ldots,n\}$ with $\abs{S}<k$.  Then for all $t>0$,
\begin{equation}\label{one3}
\vnormf{P_{t}f}_{p}\leq e^{-tk\min\left(\frac{2(p-1)}{p},\frac{1}{p-1}\right)}\vnorm{f}_{p}.
\end{equation}
\end{theorem}

The constant in Theorem \ref{thm13} for $k=1$, which comes from an application of H\"{o}lder's inequality, is strictly worse than that of Theorem \ref{main} for $C=1$, unless $p=2$.
\begin{remark}
If $P_{t}$ is any symmetric Markov semigroup, and if $f\colon\Omega\to\{-1,0,1\}$ satisfies $\vnorm{P_{t}f}_{2}\leq e^{-tk}\vnorm{f}_{2}$, then \eqref{one3} holds for $f$ by H\"{o}lder's inequality, as shown in the proof of Theorem \ref{thm13}.
\end{remark}
}
%%% END IGNORED PART

\subsection{Organization}

We prove Theorem \ref{main} in Section \ref{secpoincare}.  Theorem \ref{main2} is then derived as a Corollary in Section \ref{secheat}, where Theorem \ref{thm13} is also shown.  A complex interpolation proof of Theorem \ref{main} is given in Section \ref{secnaz}, albeit with worse constants.  A semigroup proof of Theorem \ref{main} is given in Section \ref{secsemi}. Theorem \ref{thm4} is proven in Section \ref{sectal}, and Theorem \ref{thm30} is proven in Section \ref{sechatami}.
%Finally, we make a brief comment about Theorems \ref{main} and \ref{main2} for infinite spaces $\Omega$ in Section \ref{secinf}.

\section{Poincar\'{e} Inequalities}\label{secpoincare}
In this section we prove Theorem~\ref{main} and Theorem~\ref{main2}.

%[may only need to use \eqref{three5} in statement of the Lemma; make others claims along the way?]

Let $x\in\R$.  In what follows, we use the standard notation $x_{+}\colonequals\max(x,0)$ and $x_{-}\colonequals\max(-x,0)$, so that $x=x_{+}-x_{-}$ and $\abs{x}^{p}=x_{+}^{p}+x_{-}^{p}$ for any $x\in\R$ and $p>0$.  Also, for $s>0$ we will denote by $\phi_{s}$ the function $\phi_{s}(x)\colonequals\sign(x)\cdot\abs{x}^{s}$, so that $\phi_{s}(x)=x_{+}^{s}-x_{-}^{s}$ for every $x\in\R$.

\begin{lemma}\label{lemma10}
Let $p>1$, and let $X$ be a real random variable with $\E\abs{X}^{p}<\infty$ and $\E X =0$.  For every $p\in(1,\infty)\setminus\{2\}$,

\begin{equation}\label{three5}
(\E X_{+}^{p/2})^{2}
+(\E X_{-}^{p/2})^{2}
+(\E X_{+}^{p/2})^{-\frac{2}{p-2}}
(\E X_{-}^{p/2})^{\frac{2p-2}{p-2}}
+(\E X_{-}^{p/2})^{-\frac{2}{p-2}}
(\E X_{+}^{p/2})^{\frac{2p-2}{p-2}}
\leq\E \abs{X}^{p}.
\end{equation}
\end{lemma}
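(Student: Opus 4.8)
The plan is to bound $\E\abs{X}^{p}$ from below by an \emph{affine} functional of the three quantities $\E X_{+}^{p/2}$, $\E X_{-}^{p/2}$ and $\E X$, the point being that the last of these vanishes. Write $a\colonequals\E X_{+}^{p/2}$ and $b\colonequals\E X_{-}^{p/2}$; since $\E X=0$ one checks that $a=0$ iff $b=0$ iff $X\equiv0$, so we may assume $a,b>0$ (the degenerate case being trivial under the natural conventions). It then suffices to produce $c_{0},c_{1},c_{2},c_{3}\in\R$ with
\[
\abs{x}^{p}\ge c_{1}x_{+}^{p/2}+c_{2}x_{-}^{p/2}+c_{3}x+c_{0}\qquad\text{for all }x\in\R,
\]
and with $c_{1}a+c_{2}b+c_{0}$ equal to the right-hand side of \eqref{three5}: taking expectations in the pointwise bound and using $\E X=0$ then gives the lemma. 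Equality will hold throughout for a suitable two-point mean-zero distribution, and that extremal configuration is what dictates the $c_{i}$.

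To choose the constants, put $\alpha\colonequals2/(p-2)$, $\rho\colonequals(b/a)^{\alpha}$, $r\colonequals\rho/(1+\rho)\in(0,1)$, and let $u,w>0$ be determined by $a=ru^{p/2}$, $b=(1-r)w^{p/2}$; one checks $ru=(1-r)w$. I would define $g(x)\colonequals c_{1}x_{+}^{p/2}+c_{2}x_{-}^{p/2}+c_{3}x+c_{0}$ by requiring that $g$ and $g'$ agree with $\abs{x}^{p}$ at $x=u$ and at $x=-w$; this is two $2\times2$ linear systems (solvable because $p\neq2$), whose solution has
\[
c_{3}=\frac{p\,(w^{p}-u^{p})}{(p-2)(u+w)},\qquad c_{0}=-\frac{uw\,(u^{p-1}+w^{p-1})}{u+w}<0,
\]
and corresponding explicit $c_{1},c_{2}$ (whose values are not needed below).

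To verify the pointwise inequality it is enough to treat $x\ge0$ and $x\le0$ separately; after the substitution $y=-x$ both reduce to showing that a function of the form $\psi(t)=t^{p}-\beta_{1}t^{p/2}-\beta_{2}t-\beta_{3}$ is nonnegative on $(0,\infty)$, where by construction $\psi$ has a double zero at an interior point, while $\psi(0^{+})=-c_{0}>0$ and $\psi(t)\to+\infty$. The key observation is that $\psi''(t)=t^{p/2-2}\bigl(p(p-1)t^{p/2}-\tfrac{p}{2}(\tfrac{p}{2}-1)\beta_{1}\bigr)$ is a power of $t$ times an affine function of $t^{p/2}$, hence changes sign at most once on $(0,\infty)$; by Rolle's theorem applied twice, $\psi$ therefore has at most three zeros on $(0,\infty)$ counted with multiplicity. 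Since the prescribed interior double zero already accounts for two of these and $\psi$ is positive near $0$ and near $\infty$, $\psi$ cannot become negative, so $\psi\ge0$. This is the step I expect to be the main obstacle: $\psi$ is neither convex nor concave in general, so one really needs the sign-of-$\psi''$ zero-counting argument (together with the correctly chosen tangency points) rather than a naive convexity bound.

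It remains to check that $c_{1}a+c_{2}b+c_{0}$ is the right-hand side of \eqref{three5}. The two-point measure $\mu_{0}=r\delta_{u}+(1-r)\delta_{-w}$ has mean $ru-(1-r)w=0$, satisfies $\E_{\mu_{0}}X_{+}^{p/2}=a$ and $\E_{\mu_{0}}X_{-}^{p/2}=b$, and has $\abs{x}^{p}=g(x)$ on its support; hence $ru^{p}+(1-r)w^{p}=\E_{\mu_{0}}g(X)=c_{1}a+c_{2}b+c_{0}$. On the other hand $u^{p}=a^{2}/r^{2}$, $w^{p}=b^{2}/(1-r)^{2}$, $1/r=1+(a/b)^{\alpha}$, $1/(1-r)=1+(b/a)^{\alpha}$, and $2+\alpha=(2p-2)/(p-2)$, which together give
\[
ru^{p}+(1-r)w^{p}=\frac{a^{2}}{r}+\frac{b^{2}}{1-r}=a^{2}+b^{2}+a^{2+\alpha}b^{-\alpha}+b^{2+\alpha}a^{-\alpha},
\]
exactly the right-hand side of \eqref{three5}. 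This completes the plan; the only remaining work is the routine solving of the linear systems and the elementary calculus of the previous paragraph.
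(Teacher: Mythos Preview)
Your argument is correct and takes a genuinely different route from the paper's. The paper's proof uses $\E X_{+}=\E X_{-}=\tfrac12\E|X|$ together with two H\"older/Jensen estimates applied separately to $X_{+}$ and $X_{-}$: one bounding $\E X_{\pm}^{p/2}$ above by $(\E X_{\pm}^{p})^{\frac{p-2}{2p-2}}(\E|X|/2)^{\frac{p}{2p-2}}$, and a companion bound of the form $\E|X|/2\le(\E X_{\pm}^{p/2})^{2/p}\P(X\gtrless0)^{(p-2)/p}$. After raising each to an appropriate power, summing over the $\pm$ versions (using $\P(X>0)+\P(X<0)\le1$), and then \emph{multiplying} the two resulting inequalities, the auxiliary factor $(\E|X|)^{\pm p/(p-2)}$ cancels and \eqref{three5} falls out. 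The ranges $p>2$ and $1<p<2$ require two different pairs of H\"older applications.

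Your approach is instead a duality/extremal argument: you identify the two-point mean-zero law matching the prescribed moments $a,b$, build the unique minorant of $|x|^{p}$ in $\mathrm{span}\{x_{+}^{p/2},x_{-}^{p/2},x,1\}$ tangent at the two atoms, and certify the pointwise bound by the zero-counting argument for $\psi$ via the sign structure of $\psi''$. This is slightly longer to execute but more conceptual: it handles all $p\in(1,\infty)\setminus\{2\}$ uniformly, and it exhibits the extremal distribution directly---something the paper does only later, in Corollary~\ref{cor4}, by a separate calculation. The paper's route, in turn, is very short and uses nothing beyond H\"older's inequality, at the price of the product trick and the $p\lessgtr2$ case split.
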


\begin{proof}
Since $\E X=0$,
\begin{equation}\label{three6}
\E X_{+}
=\E X_{-}=(1/2)\E\abs{X}.
\end{equation}

%use q=(2p-2)/p, then 1/q'=1-1/q=1-p/(2p-2)=(p-2)/(2p-2), so q'=(2p-2)/(p-2)
Assume $p>2$.  Note that Jensen's inequality implies that
\begin{equation}\label{three7}
\E X_{+}^{p/2}
=\E X_{+}^{\frac{p^{2}-2p}{2p-2}}
%***REMOVED expectation in "\E X_{+}^{\frac{p}{2p-2}}"
X_{+}^{\frac{p}{2p-2}}
\leq(\E X_{+}^{p})^{\frac{p-2}{2p-2}}
\cdot (\E X_{+})^{\frac{p}{2p-2}}
\stackrel{\eqref{three6}}{=}(\E X_{+}^{p})^{\frac{p-2}{2p-2}}
\cdot (\E\abs{X}/2)^{\frac{p}{2p-2}}.
\end{equation}
Also, by H\"{o}lder's inequality,
\begin{equation}\label{three7.1}
\E\abs{X}/2
\stackrel{\eqref{three6}}{=}\E X_{+}
=\E X_{+}1_{\{X>0\}}
\leq(\E X_{+}^{p/2})^{2/p}\cdot\P(X>0)^{\frac{p-2}{p}}.
\end{equation}

Applying \eqref{three7} to $X$ and $(-X)$ separately, exponentiating both sides to the power $(2p-2)/(p-2)$, and then adding the results,
%take both sides to the power (2p-2)/(p-2)
\begin{equation}\label{three50}
(\E X_{+}^{p/2})^{\frac{2p-2}{p-2}}
+(\E X_{-}^{p/2})^{\frac{2p-2}{p-2}}
\stackrel{\eqref{three7}}{\leq}2^{-\frac{p}{p-2}}(\E\abs{X})^{\frac{p}{p-2}}
(\E X_{+}^{p}+\E X_{-}^{p})
=2^{-\frac{p}{p-2}}(\E\abs{X})^{\frac{p}{p-2}}
\E\abs{X}^{p}.
\end{equation}
Applying \eqref{three7.1} to $X$ and $(-X)$ separately, exponentiating both sides to the power $-p/(p-2)$, and then adding the results,
\begin{equation}\label{three50.1}
(\E X_{+}^{p/2})^{-\frac{2}{p-2}}
+(\E X_{-}^{p/2})^{-\frac{2}{p-2}}
\stackrel{\eqref{three7.1}}{\leq}2^{\frac{p}{p-2}}[\P(X>0)+\P(X<0)]
%***REPLACED "\E\abs{X}^{p}" by
(\E\abs{X})^{-\frac{p}{p-2}}
\leq2^{\frac{p}{p-2}}(\E\abs{X})^{-\frac{p}{p-2}}.
\end{equation}

Finally, multiplying \eqref{three50} and \eqref{three50.1} gives \eqref{three5}, if $p>2$.

Assume $1<p<2$.  Then \eqref{three50}, \eqref{three50.1} and \eqref{three5} also hold.
%***REMOVED "for $1<p<2$."
To see this, we use the following two consequences of H\"{o}lder's inequality.
\begin{equation}\label{three9}
\E X_{+}^{p/2}
=\E X_{+}^{p/2}1_{\{X>0\}}
\leq(\E X_{+})^{p/2}\cdot\P(X>0)^{\frac{2-p}{2}}
\stackrel{\eqref{three6}}{=}(\E\abs{X}/2)^{p/2}\cdot\P(X>0)^{\frac{2-p}{2}}.
\end{equation}
\begin{equation}\label{three9.1}
\E\abs{X}/2
\stackrel{\eqref{three6}}{=}\E X_{+}
=\E X_{+}^{p-1} X_{+}^{2-p}
\leq(\E X_{+}^{p/2})^{\frac{2p-2}{p}}\cdot(\E X_{+}^{p})^{\frac{2-p}{p}}.
\end{equation}

Applying \eqref{three9} to $X$ and $(-X)$ separately, exponentiating both sides by the power $2/(2-p)$, and then adding the results,
%take both sides to the power (2p-2)/(p-2)
%***ADDED the next line
we obtain \eqref{three50.1},
%***\begin{equation}
%***REMOVED "\label{three51}" as it is neither used nor needed, and replaced {equation} with "$$"
$$
(\E X_{+}^{p/2})^{\frac{2}{2-p}}
+(\E X_{-}^{p/2})^{\frac{2}{2-p}}
\stackrel{\eqref{three9}}{\leq}2^{-\frac{p}{2-p}}(\E\abs{X})^{\frac{p}{2-p}}
\cdot[\P(X>0)+\P(X<0)]
\leq2^{-\frac{p}{2-p}}(\E\abs{X})^{\frac{p}{2-p}}.
%***\end{equation}
$$

Applying \eqref{three9.1} to $X$ and $(-X)$ separately, exponentiating both sides to the power $-p/(2-p)$, and then adding the results,
%take both sides to the power (2p-2)/(p-2)
%***ADDED the next line
we obtain \eqref{three50},
%***\begin{equation}
$$
%***REMOVED "\label{three51.1}" as it is neither used nor needed, and replaced {equation} with "$$"
%***CHANGED exponents in "(\E X_{+}^{p/2})^{\frac{2}{2-p}}+(\E X_{-}^{p/2})^{\frac{2}{2-p}}"
(\E X_{+}^{p/2})^{-\frac{2p-2}{2-p}}+(\E X_{-}^{p/2})^{-\frac{2p-2}{2-p}}
\stackrel{\eqref{three9.1}}{\leq}2^{\frac{p}{2-p}}(\E\abs{X})^{-\frac{p}{2-p}}(\E X_{+}^{p}+\E X_{-}^{p})
=2^{\frac{p}{2-p}}(\E\abs{X})^{-\frac{p}{2-p}}\E\abs{X}^{p}.
%***\end{equation}
$$
\end{proof}

\begin{lemma}\label{lemma11}
Let $p\in(1,\infty)\setminus\{2\}$.  For any $a,b>0$,
$$(a-b)^{2}\leq\frac{p^{2}-4p+4}{2p^{2}-4p+4}\cdot(a^{2}+b^{2}+a^{\frac{2}{2-p}}\cdot b^{\frac{2p-2}{p-2}}+b^{\frac{2}{2-p}}\cdot a^{\frac{2p-2}{p-2}}).$$
\end{lemma}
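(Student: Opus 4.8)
The plan is to reduce the two-variable inequality to a one-variable one by homogeneity and symmetry, and then to recognize the hidden structure of the constant. First I would observe that the inequality is homogeneous of degree $2$ — the two cross exponents satisfy $\frac{2}{2-p}+\frac{2p-2}{p-2}=2$ — and is symmetric under interchanging $a$ and $b$, so it suffices to treat $a=e^{\theta}$, $b=e^{-\theta}$ with $\theta\ge 0$. Then $(a-b)^{2}=4\sinh^{2}\theta$ and $a^{2}+b^{2}=2\cosh 2\theta$, while a short computation gives $a^{\frac{2}{2-p}}b^{\frac{2p-2}{p-2}}+b^{\frac{2}{2-p}}a^{\frac{2p-2}{p-2}}=2\cosh(2m\theta)$, where $m\colonequals p/\lvert p-2\rvert$; note $m>1$ for every $p\in(1,\infty)\setminus\{2\}$. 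The key bookkeeping step is that the stated constant is exactly $\frac{1}{m^{2}+1}$: since $m^{2}+1=\frac{2(p^{2}-2p+2)}{(p-2)^{2}}$, one has $\frac{p^{2}-4p+4}{2p^{2}-4p+4}=\frac{(p-2)^{2}}{2(p^{2}-2p+2)}=\frac{1}{m^{2}+1}$.

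Substituting these expressions and using $\cosh 2\theta=1+2\sinh^{2}\theta$ together with $\cosh 2m\theta=2\cosh^{2}(m\theta)-1$, I expect the inequality to collapse, after clearing the denominator $m^{2}+1$, to the clean one-parameter statement
\[
m\sinh\theta\le\cosh(m\theta),\qquad \theta\ge 0,\ m\ge 1 .
\]
(Along the way one sees that $m=1$, attained in the limits $p\to 1^{+}$ and $p\to\infty$, is the borderline case, and that for $m<1$ the reduced inequality already fails as $\theta\to\infty$; this is exactly where the hypothesis $p\ne 2$ is used.)

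To finish I would multiply $m\sinh\theta\le\cosh(m\theta)$ by $2e^{m\theta}$ and set $x=e^{\theta}\ge 1$, turning it into $G(x)\colonequals x^{2m}+1-mx^{m+1}+mx^{m-1}\ge 0$. Here $G(1)=2$, so it suffices to show $G$ is nondecreasing on $[1,\infty)$. One computes $G'(x)=mx^{m-2}H(x)$ with $H(x)\colonequals 2x^{m+1}-(m+1)x^{2}+(m-1)$, and $H(1)=0$ while $H'(x)=2(m+1)x(x^{m-1}-1)\ge 0$ on $[1,\infty)$ since $m\ge 1$; hence $H\ge 0$, so $G'\ge 0$, so $G\ge 2>0$ on $[1,\infty)$, as needed.

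I expect the main difficulty to be organizational rather than conceptual: it is easy to get lost in the algebra of the fractional exponents unless one first isolates the natural frequency $m=p/\lvert p-2\rvert$ and notices that the opaque constant $\frac{p^{2}-4p+4}{2p^{2}-4p+4}$ equals $\frac{1}{m^{2}+1}$. Once that identity is in place, the analytic core is just the elementary monotonicity argument for $m\sinh\theta\le\cosh(m\theta)$ above, and the cases $1<p<2$ and $p>2$ are handled uniformly because only $m=p/\lvert p-2\rvert$ and the even functions $\cosh$ and $\sinh^{2}$ ever appear.
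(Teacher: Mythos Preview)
Your proof is correct and follows essentially the same route as the paper: both reduce, via homogeneity and the substitution $a=e^{\theta}$, $b=e^{-\theta}$ (equivalently the paper's $x=\tfrac12\log(a/b)$), to the one-variable inequality $\cosh(m\theta)\ge m\sinh\theta$ with $m=|p/(p-2)|>1$, and then square and unwind. The only difference is in how that elementary inequality is verified: the paper dispatches it in one line by comparing power series, namely $\cosh(m\theta)\ge \sinh(m\theta)=\sum_{j\ge 0}\frac{m^{2j+1}\theta^{2j+1}}{(2j+1)!}\ge m\sum_{j\ge 0}\frac{\theta^{2j+1}}{(2j+1)!}=m\sinh\theta$, whereas you run a longer (but equally valid) monotonicity argument after the change of variable $x=e^{\theta}$.
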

\begin{proof}
Without loss of generality, $a\geq b$.  Define $s\colonequals p/(p-2)$.  Since $\abs{s}>1$, for every $x\geq0$ we have
%e^x= 1+x+x^2/2+x^{3}/6+...
%e^-x=1-x+x^2/2-x^3/6+...
%so, e^x-e^-x is twice the odd order terms
\begin{equation}\label{three11}
e^{sx}+e^{-sx}
\geq e^{\abs{s}x}-e^{-\abs{s}x}
=2\sum_{j=0}^{\infty}\frac{\abs{s}^{2j+1}x^{2j+1}}{(2j+1)!}
\geq2\abs{s}\sum_{j=0}^{\infty}\frac{x^{2j+1}}{(2j+1)!}
= \abs{s}(e^{x}-e^{-x}).
\end{equation}
Set $x\colonequals(1/2)\log(a/b)$ and square the inequality \eqref{three11} to get
\begin{equation}\label{three12}
a^{s}b^{-s}+b^{s}a^{-s}+2\geq s^{2}(ab^{-1}+ba^{-1}-2).
\end{equation}
Multiplying both sides of \eqref{three12} by $ab$, then adding $(a-b)^{2}$ to both sides,
% a^{1+s}b^{1-s}+b^{1+s}a^{1-s}+2ab\geq s^{2}(a^{2}+b^{2}-2ab)=s^{2}(a-b)^{2}
% then add a^{2}+b^{2}-2ab=(a-b)^{2}
% blah\geq(1+s^{2})(a^{2}+b^{2})-2ab
\begin{equation}\label{three13}
a^{2}+b^{2}+a^{1-s}b^{1+s}+b^{1-s}a^{1+s}\geq(1+s^{2})(a-b)^{2}.
\end{equation}
And \eqref{three13} completes the lemma.
%1+s^{2}=1+p^{2}/(p-2)^{2}=[(p-2)^2+p^{2}]/(p-2)^2=[2p^{2}-4p+4]/[p^{2}-4p+4]
%1+s=1+p/(p-2)=(2p-2)/(p-2)
%1-s=1-p/(p-2)=-2/(p-2)=2/(2-p)
\end{proof}

\begin{lemma}\label{lemma12}
Let $p>1$ and let $X$ be a real random variable such that $\E\abs{X}^{p}<\infty$ and $\E X=0$.  Then
\begin{equation}\label{three15}
\big(\E X_{+}^{p/2}-\E X_{-}^{p/2}\big)^{2}
\leq\left(1-\frac{p^{2}}{2(p^{2}-2p+2)}\right)\cdot\E\abs{X}^{p}.
\end{equation}
\end{lemma}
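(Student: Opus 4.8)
The plan is to set $a := \E X_+^{p/2}$ and $b := \E X_-^{p/2}$, chain together Lemma~\ref{lemma10} and Lemma~\ref{lemma11}, and then simplify the resulting constant. Note first that the two lemmas are tailored to fit: the exponent $-\tfrac{2}{p-2}$ appearing in Lemma~\ref{lemma10} equals the exponent $\tfrac{2}{2-p}$ appearing in Lemma~\ref{lemma11}, so after this rewriting Lemma~\ref{lemma10} says exactly
$$a^{2}+b^{2}+a^{\frac{2}{2-p}}b^{\frac{2p-2}{p-2}}+b^{\frac{2}{2-p}}a^{\frac{2p-2}{p-2}}\leq\E\abs{X}^{p}$$
for $p\in(1,\infty)\setminus\{2\}$, which is precisely the quantity bounded from above in Lemma~\ref{lemma11}.

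Before invoking these, I would dispose of the trivial cases. If $p=2$, then $\E X_+^{p/2}-\E X_-^{p/2}=\E X_+-\E X_-=\E X=0$, while the right-hand side is $\big(1-\tfrac{4}{2\cdot 2}\big)\E X^{2}=0$, so the claimed inequality holds with equality. If $\E X_+^{p/2}=0$, then $X_+=0$ almost surely, hence $X\leq 0$ almost surely, and since $\E X=0$ this forces $X=0$ almost surely, making both sides of \eqref{three15} vanish; the case $\E X_-^{p/2}=0$ is symmetric. This reduces matters to $p\in(1,\infty)\setminus\{2\}$ with $a>0$ and $b>0$, which is the regime in which Lemma~\ref{lemma11} applies.

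In that main case, Lemma~\ref{lemma11} gives
$$(a-b)^{2}\leq\frac{p^{2}-4p+4}{2p^{2}-4p+4}\cdot\Big(a^{2}+b^{2}+a^{\frac{2}{2-p}}b^{\frac{2p-2}{p-2}}+b^{\frac{2}{2-p}}a^{\frac{2p-2}{p-2}}\Big),$$
and Lemma~\ref{lemma10} bounds the parenthesized quantity by $\E\abs{X}^{p}$. Combining the two and using $2p^{2}-4p+4=2(p^{2}-2p+2)$ yields $(a-b)^{2}\leq\frac{(p-2)^{2}}{2(p^{2}-2p+2)}\,\E\abs{X}^{p}$. The proof then concludes with the elementary identity
$$1-\frac{p^{2}}{2(p^{2}-2p+2)}=\frac{2(p^{2}-2p+2)-p^{2}}{2(p^{2}-2p+2)}=\frac{p^{2}-4p+4}{2(p^{2}-2p+2)}=\frac{(p-2)^{2}}{2(p^{2}-2p+2)},$$
which matches the constant in \eqref{three15}.

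There is essentially no real obstacle here: all the analytic content already resides in Lemmas~\ref{lemma10} and~\ref{lemma11}, and the remaining work is purely bookkeeping — reconciling the exponents $\tfrac{2}{2-p}$ and $-\tfrac{2}{p-2}$ between the two lemmas, handling the degenerate cases $a=0$, $b=0$, and $p=2$ separately, and verifying the final constant identity.
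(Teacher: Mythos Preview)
Your proposal is correct and follows essentially the same approach as the paper: dispose of the degenerate cases $p=2$ and $X=0$ a.s., then set $a=\E X_{+}^{p/2}$, $b=\E X_{-}^{p/2}$ and combine Lemma~\ref{lemma11} with Lemma~\ref{lemma10}. Your handling of the degenerate cases is slightly more explicit (you argue directly that $a=0$ or $b=0$ forces $X=0$ a.s.), and you spell out the constant identity, but the argument is the same.
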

\begin{proof}
If $p=2$ or if $X=0$ almost surely, then both sides are zero.   So, we may assume $p\in(1,\infty)\setminus\{2\}$ and $X$ is nonzero on a set of positive measure.  In this case, set $a\colonequals\E X_{+}^{p/2}$, $b\colonequals\E X_{-}^{p/2}$, and apply Lemma \ref{lemma11} and then \eqref{three5}.
\end{proof}

\begin{lemma}[Stroock-Varopoulos]\cite{stroock84,varo85}\label{lemma13}
Let $a,b\in\R$, $p>1$.  Then
\begin{equation}\label{three14}
(\phi_{p-1}(a)-\phi_{p-1}(b))(a-b)\geq\frac{4(p-1)}{p^{2}}(\phi_{p/2}(a)-\phi_{p/2}(b))^{2}.
\end{equation}
\end{lemma}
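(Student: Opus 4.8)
The plan is to derive \eqref{three14} from an exact algebraic identity between the derivatives of $\phi_{p-1}$ and $\phi_{p/2}$, followed by a single application of the Cauchy--Schwarz inequality.

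First I would record the pointwise computation: for $x\neq 0$ one has $\phi_{p-1}'(x)=(p-1)\abs{x}^{p-2}$ and $\phi_{p/2}'(x)=\tfrac{p}{2}\abs{x}^{p/2-1}$, so that $\bigl(\phi_{p/2}'(x)\bigr)^{2}=\tfrac{p^{2}}{4}\abs{x}^{p-2}$ and therefore
$$\phi_{p-1}'(x)=\frac{4(p-1)}{p^{2}}\bigl(\phi_{p/2}'(x)\bigr)^{2}\qquad\text{for a.e. }x\in\R .$$

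Next, since $p>1$ the function $x\mapsto\abs{x}^{p-2}$ is locally integrable on $\R$ (this is the only place the hypothesis $p>1$ enters, and it is exactly what breaks down for $p\le 1$), hence $\phi_{p-1}$ and $\phi_{p/2}$ are locally absolutely continuous and $\phi_{p/2}'\in L^{2}_{\mathrm{loc}}(\R)$. The claimed inequality is symmetric under swapping $a$ and $b$, so I may assume $a\ge b$; the fundamental theorem of calculus then gives
$$\phi_{p-1}(a)-\phi_{p-1}(b)=\int_{b}^{a}\phi_{p-1}'(x)\,dx=\frac{4(p-1)}{p^{2}}\int_{b}^{a}\bigl(\phi_{p/2}'(x)\bigr)^{2}\,dx ,\qquad \phi_{p/2}(a)-\phi_{p/2}(b)=\int_{b}^{a}\phi_{p/2}'(x)\,dx .$$
Applying Cauchy--Schwarz on $L^{2}([b,a])$ to $\phi_{p/2}'$ and the constant function $1$,
$$\bigl(\phi_{p/2}(a)-\phi_{p/2}(b)\bigr)^{2}=\Bigl(\int_{b}^{a}\phi_{p/2}'(x)\,dx\Bigr)^{2}\le (a-b)\int_{b}^{a}\bigl(\phi_{p/2}'(x)\bigr)^{2}\,dx=\frac{p^{2}}{4(p-1)}\,(a-b)\bigl(\phi_{p-1}(a)-\phi_{p-1}(b)\bigr),$$
and multiplying through by $\tfrac{4(p-1)}{p^{2}}$ yields \eqref{three14}. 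The inequality $\tfrac{4(p-1)}{p^{2}}\le1$ is just $(p-2)^{2}\ge0$, with equality at $p=2$, consistent with Cauchy--Schwarz being tight precisely when $\phi_{p/2}'$ is constant on $[b,a]$, i.e. $p=2$ or $a=b$.

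I do not expect a serious obstacle; the only point needing care is the validity of the fundamental theorem of calculus for $\phi_{p-1}$ and $\phi_{p/2}$ across the possible singularity at the origin when $1<p<2$, which is covered by the local integrability of $\abs{x}^{p-2}$ noted above. If one prefers to sidestep absolute continuity, an alternative is to split into the cases $ab\ge0$ and $ab<0$: for equal signs, reduce by homogeneity and $(a,b)\mapsto(-a,-b)$ to $a\ge b\ge0$ and use $(a^{p-1}-b^{p-1})(a-b)=\tfrac{p-1}{2}\iint_{[b,a]^{2}}\!\bigl(u^{p-2}+v^{p-2}\bigr)\,du\,dv$ together with $u^{p-2}+v^{p-2}\ge2(uv)^{(p-2)/2}$; for opposite signs, with $\alpha=\abs{a},\beta=\abs{b}$ the claim reads $(\alpha^{p-1}+\beta^{p-1})(\alpha+\beta)\ge\tfrac{4(p-1)}{p^{2}}(\alpha^{p/2}+\beta^{p/2})^{2}$, which follows from $\tfrac{4(p-1)}{p^{2}}\le1$ and $\alpha\beta^{p-1}+\alpha^{p-1}\beta\ge2(\alpha\beta)^{p/2}$. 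Either argument is short, and I would present the Cauchy--Schwarz one.
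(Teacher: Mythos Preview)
Your argument is correct and is essentially the same as the paper's: both write $\phi_{p/2}(a)-\phi_{p/2}(b)$ and $\phi_{p-1}(a)-\phi_{p-1}(b)$ as integrals of $\abs{t}^{p/2-1}$ and $\abs{t}^{p-2}$ over $[b,a]$ and apply Cauchy--Schwarz to the pair $(\abs{t}^{p/2-1},1)$. You are simply a bit more explicit about the absolute-continuity issue near $t=0$ for $1<p<2$, which the paper leaves implicit.
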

\begin{proof}
Applying the Cauchy-Schwarz inequality,
\begin{flalign*}
\frac{4}{p^{2}}(\abs{a}^{p/2}\sign(a)-\abs{b}^{p/2}\sign(b))^{2}
&=\left(\int_{a}^{b}\abs{t}^{(p/2)-1}dt\right)^{2}
\leq\int_{a}^{b}\abs{t}^{p-2}dt
\cdot\int_{a}^{b}dt\\
&=\frac{1}{p-1}(\abs{a}^{p-1}\sign(a)-\abs{b}^{p-1}\sign(b))(a-b).
\end{flalign*}
\end{proof}

%\int (f-\int f)^{2}\leq\int|\nabla f |^{2}
\begin{proof}[Proof of Theorem \ref{main2}]
Recall that for any $g,h:\Omega \to \R$ we have
\begin{equation}\label{ten10}
\E gLh=\frac{1}{2}\sum_{x,y \in \Omega} \left(-\E{\bf 1}_{\{x\}}L{\bf 1}_{\{y\}}\right) \cdot (g(x)-g(y))(h(x)-h(y)),
\end{equation}
and that for $x \neq y$ there is
\begin{equation}\label{ten11}
-\E{\bf 1}_{\{x\}}L{\bf 1}_{\{y\}}=\mu(\{y\}) \cdot \frac{\mathrm{d}}{\mathrm{d} t}(P_{t}{\bf 1}_{\{x\}})(y)\Big|_{t=0^{+}} \geq 0.
\end{equation}
Therefore,
\begin{flalign*}
\E \phi_{p-1}(f)Lf
&\stackrel{\eqref{ten10}\wedge\eqref{ten11}\wedge\eqref{three14}}{\geq}
\frac{4(p-1)}{p^{2}}\E\phi_{p/2}(f)L\phi_{p/2}(f)\\
&\stackrel{\eqref{ten1}}{\geq}\frac{4(p-1)}{Cp^{2}}\cdot\big(\E\phi_{p/2}(f)^{2}-(\E\phi_{p/2}(f))^{2}\big)\\
&=\frac{4(p-1)}{Cp^{2}}\left(\E\abs{f}^{p}-\big(\E f_{+}^{p/2}-\E f_{-}^{p/2}\big)^{2}\right)
\stackrel{\eqref{three15}}{\geq}
%***ADDED a missing factor in "\frac{4}{Cp^{2}}"
\frac{4(p-1)}{Cp^{2}}
\left(\frac{p^{2}}{2(p^{2}-2p+2)}\right)\E\abs{f}^{p}.
\end{flalign*}
%\begin{flalign*}
%\int_{\R^{n}}\abs{f}^{p-2}\vnormf{\nabla f}_{2}^{2}d\gamma_{n}
%&=\frac{1}{p-1}\int_{\R^{n}}\langle\nabla(\abs{f}^{p-1}\sign(f)),\nabla f\rangle d\gamma_{n}\\
%&\stackrel{\eqref{three14}}{\geq}\frac{4}{p^{2}}\int_{\R^{n}}\vnormf{\nabla(\abs{f}^{p/2}\sign(f))}_{2}^{2}d\gamma_{n}\\
%&\stackrel{\eqref{zero00}}{\geq}\frac{4}{p^{2}}\left(\int_{\R^{n}}[\abs{f}^{p/2}\sign(f)]^{2}d\gamma_{n}-(\int_{\R^{n}}\abs{f}^{p/2}\sign(f)d\gamma_{n})^{2}\right)\\
%&=\frac{4}{p^{2}}\left(\int_{\R^{n}}\abs{f}^{p}d\gamma_{n}-(\int_{\R^{n}}(f_{+}^{p/2}-f_{-}^{p/2})d\gamma_{n})^{2}\right)\\
%&\stackrel{\eqref{three15}}{\geq}\frac{4}{p^{2}}\left(\frac{p^{2}}{2(p^{2}-2p+2)}\right)\int_{\R^{n}}\abs{f}^{p}d\gamma_{n}.
%\end{flalign*}
%so, in the original inequatliy, we get 2/(p^{2}-2p+2).  for p=2, get 1!  as p\to\infty, get 1/p^{2}.  as p\to1, get
\end{proof}

\section{Heat Smoothing}\label{secheat}

%[fix signs]

We now show that Theorem \ref{main2} implies Theorem \ref{main}.

\begin{proof}[Proof of Theorem \ref{main}]
Note that $\E f=0$ implies $\E P_{t}f=0$ for all $t\geq0$.  So, by Theorem \ref{main2},
\begin{flalign*}
&\frac{\mathrm{d}}{\mathrm{d} t}\left(\exp\left(\frac{(2p-2)pt}{(p^{2}-2p+2)C}\right)\cdot \E|P_{t}f|^{p}\right)\\
&\quad=\exp\left(\frac{(2p-2)pt}{(p^{2}-2p+2)C}\right)\left( \frac{(2p-2)p}{(p^{2}-2p+2)C}\,\E|P_{t}f|^{p}-p\cdot
%***REPLACED \varphi by \phi in "\E\varphi_{p-1}"
\E\phi_{p-1}
(P_{t}f)LP_{t}f\right)\leq 0.
\end{flalign*}
\end{proof}

\begin{remark}\label{rk31}
One easily extends Theorem~\ref{main2} from real-valued functions to $f$ taking values in a Euclidean space, with the same constant. In particular,
we get the same statement for complex-valued functions. Indeed, it suffices to apply Theorem~\ref{main} to $f_{v}(x):=\langle f(x),v \rangle$ and average over $v$'s from the unit sphere. Note that $|\langle w,v \rangle|^{p}$ averaged over the unit sphere (with respect to the uniform measure) is proportional to $\| w\|^{p}$, and the proportionality constant will cancel out.
\end{remark}

\begin{remark}\label{rk33}
Let $\kappa(p)=\inf_{u>1} \frac{u^{\frac{p}{p-2}}+u^{-\frac{p}{p-2}}}{u-u^{-1}}$. Note that $\kappa(p)=\kappa(p')$ since
$\frac{p}{p-2}=\frac{pp'}{p-p'}$. A simple analysis of the proof shows that we may strengthen the assertion of Theorem~\ref{main2} to
\[
\| P_{t}f\|_{p} \leq \exp\left(-\frac{(4p-4)t}{Cp^{2}(1+\kappa(p)^{-2})}\right)\cdot \|f\|_{p}.
\]
We have established (in the proof of Lemma~\ref{lemma11}) the estimate $\kappa(p) \geq |\frac{p}{p-2}|$. One can do better, however. For example,
there is $\kappa(4)=\kappa(4/3)=2\sqrt{2}$ and $\kappa(6)=\kappa(6/5)=2$, so that for every mean-zero $f$ we have
\[
\| P_{t}f\|_{4} \leq e^{-\frac{2t}{3C}}\| f\|_{4},\,\,\, \| P_{t}f\|_{4/3} \leq e^{-\frac{2t}{3C}}\| f\|_{4/3},
\]
\[
\| P_{t}f\|_{6} \leq e^{-\frac{4t}{9C}}\| f\|_{6},\,\,\, \| P_{t}f\|_{6/5} \leq e^{-\frac{4t}{9C}}\| f\|_{6/5}.
\]
Also, one can easily strengthen the lower bound to $\kappa(p) \geq \sqrt{\frac{p^{2}+4p-4}{p^{2}-4p+4}}$. Indeed,
for $s=\frac{p}{p-2}$ we have $|s|>1$, so that $u^{|s|/2}-u^{-|s|/2} \geq |s|(u^{1/2}-u^{-1/2})$ for every $u>1$. Squaring this inequality,
we get $u^{s}+u^{-s} \geq 2+s^{2}(u+u^{-1}-2)$, so that
\[
\kappa(p)=\inf_{u>1} \frac{u^{s}+u^{-s}}{u-u^{-1}} \geq \inf_{u>1} \frac{2+s^{2}(u+u^{-1}-2)}{u-u^{-1}}=\sqrt{2s^{2}-1}=
\sqrt{\frac{p^{2}+4p-4}{p^{2}-4p+4}}
\]
which, for $p>1$ and mean-zero functions $f$, yields
\[
\| P_{t}f\|_{p} \leq e^{-\left( \frac{2}{pp'}+\frac{8}{p^{2}{p'}^{2}}\right)t/C}\| f\|_{p}.
\]
\end{remark}

\begin{corollary}\label{cor4}
Let $p \in (1,\infty) \setminus \{ 2\}$ and let $X$ be a mean-zero real random variable with $\E |X|^{p}<\infty$. Then
\[
\left(\kappa(p)^{2}+1\right) \cdot \left( \E X_{+}^{p/2} -\E X_{-}^{p/2}\right)^{2} \leq \E|X|^{p},
\]
where $\kappa(p)=\inf_{u>1} \frac{u^{\frac{p}{p-2}}+u^{-\frac{p}{p-2}}}{u-u^{-1}}$.  Moreover, the constant $\kappa(p)^{2}+1$ is optimal.
Furthermore, under the assumptions of Theorem \ref{main2},
\[
\E \phi_{p/2}(f)L\phi_{p/2}(f) \geq \frac{C^{-1}}{1+\kappa(p)^{-2}}\cdot \E|f|^{p},
\]
and the constant  $C^{-1}/\left(1+\kappa(p)^{-2}\right)$ is optimal.
\end{corollary}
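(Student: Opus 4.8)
The plan is to retrace the proof of Theorem~\ref{main2}, replacing the lossy step of Lemma~\ref{lemma11}---where the crude bound $e^{sx}+e^{-sx}\ge|s|(e^{x}-e^{-x})$ discards a factor---by the sharp form of that one-variable inequality, and then to exhibit extremizers. Set $s\colonequals p/(p-2)$, so that $1-s=-2/(p-2)$ and $1+s=(2p-2)/(p-2)$, and recall from Lemma~\ref{lemma10} (inequality~\eqref{three5}, valid for every $p\in(1,\infty)\setminus\{2\}$) that $a^{2}+b^{2}+a^{1-s}b^{1+s}+b^{1-s}a^{1+s}\le\E|X|^{p}$, where $a\colonequals\E X_{+}^{p/2}$ and $b\colonequals\E X_{-}^{p/2}$. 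Thus the first assertion follows once we prove the sharp algebraic inequality $(\kappa(p)^{2}+1)(a-b)^{2}\le a^{2}+b^{2}+a^{1-s}b^{1+s}+b^{1-s}a^{1+s}$ for all $a,b>0$. By symmetry and homogeneity we may assume $a\ge b>0$ and divide by $ab$; with $t\colonequals a/b\ge1$ the claim reads $(\kappa(p)^{2}+1)(t+t^{-1}-2)\le t+t^{-1}+t^{s}+t^{-s}$. Substituting $t=u^{2}$ and using $t+t^{-1}-2=(u-u^{-1})^{2}$ together with $t^{s}+t^{-s}+2=(u^{s}+u^{-s})^{2}$, this becomes precisely $\kappa(p)^{2}\le\big((u^{s}+u^{-s})/(u-u^{-1})\big)^{2}$ for every $u>1$, which is immediate from the definition $\kappa(p)=\inf_{u>1}(u^{s}+u^{-s})/(u-u^{-1})$; since $|s|>1$ when $p\ne2$, this infimum is finite, positive, and attained at some $u_{*}>1$. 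For the second assertion I would apply the Poincar\'e inequality~\eqref{ten1} to $\phi_{p/2}(f)$ and use $\phi_{p/2}(f)^{2}=|f|^{p}$ and $\E\phi_{p/2}(f)=\E f_{+}^{p/2}-\E f_{-}^{p/2}$, then invoke the first assertion, to get
\[
\E\phi_{p/2}(f)L\phi_{p/2}(f)\ge\frac{1}{C}\Big(\E|f|^{p}-\big(\E f_{+}^{p/2}-\E f_{-}^{p/2}\big)^{2}\Big)\ge\frac{1}{C}\Big(1-\frac{1}{\kappa(p)^{2}+1}\Big)\E|f|^{p}=\frac{C^{-1}}{1+\kappa(p)^{-2}}\E|f|^{p}.
\]

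For the two optimality claims I would use two-point distributions. If $X$ takes only the values $\alpha>0$ and $-\beta<0$ and has mean zero, then $X_{+}$ is constant on $\{X>0\}$ and $X_{-}$ is constant on $\{X<0\}$, so every application of H\"older's or Jensen's inequality in the proof of Lemma~\ref{lemma10}---namely~\eqref{three7} and~\eqref{three7.1} when $p>2$, and~\eqref{three9} and~\eqref{three9.1} when $1<p<2$---becomes an equality; hence \eqref{three5} holds with equality for such $X$. A short computation gives $\E X_{+}^{p/2}/\E X_{-}^{p/2}=(\alpha/\beta)^{(p-2)/2}$, which takes every value in $(0,\infty)$, and choosing this ratio to equal $u_{*}^{2}$ turns the algebraic inequality above into an equality, so $\kappa(p)^{2}+1$ is best possible (in fact attained). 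For the second assertion, work on $\Omega=\{1,2\}$: there the only nonzero eigenvalue of $L$ acts on the one-dimensional space of mean-zero functions, so \eqref{ten1} holds with equality; choosing the two-point measure so that the (unique up to scaling) mean-zero $f$ is the extremal $X$ above, and normalizing the semigroup to have Poincar\'e constant $C$, yields $\E\phi_{p/2}(f)L\phi_{p/2}(f)=\big(C^{-1}/(1+\kappa(p)^{-2})\big)\E|f|^{p}$.

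The only point that requires an idea is the sharp algebraic inequality replacing Lemma~\ref{lemma11}: the substitution $t=u^{2}$ is exactly what collapses the optimal constant to $\kappa(p)^{2}+1$ and identifies it with the square of $\kappa(p)$. The rest---propagating equality cases through the H\"older/Jensen steps of Lemma~\ref{lemma10}, and checking that the two-point family realizes the extremal ratio $u_{*}^{2}$---is routine bookkeeping; as a byproduct one also reads off $\kappa(p)=\kappa(p')$, since $p/(p-2)$ and $p'/(p'-2)$ differ only by a sign.
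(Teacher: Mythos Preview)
Your argument is correct and follows essentially the same route as the paper. Both sharpen Lemma~\ref{lemma11} to the optimal constant via the substitution $t=u^{2}$, then feed it through Lemma~\ref{lemma10} and the Poincar\'e inequality exactly as in the proof of Theorem~\ref{main2}; for optimality, both use a mean-zero two-point random variable together with the two-point semigroup $L=C^{-1}(Id-\E)$. The only cosmetic difference is that the paper verifies optimality by an explicit computation on the specific two-point law, whereas you argue more structurally that every H\"older/Jensen step in Lemma~\ref{lemma10} collapses to equality when $X_{+}$ and $X_{-}$ are each supported on a single point---a clean way to see the same thing.
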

\begin{proof}
The above bounds follow from an obvious strengthening of the proof of Theorem \ref{main2} as explained in Remark \ref{rk33}.  To check the optimality of the constants, choose $v>1$ such that
$v^{\frac{p}{p-2}}+v^{-\frac{p}{p-2}}=\kappa(p)(v-v^{-1})$ and set $\alpha=1/(1+v^{\frac{4}{p-2}})$ and $\beta=1-\alpha$, so that $\beta/\alpha=v^{\frac{4}{p-2}}$.
Then, for a mean-zero random variable $X$ such that $\P(X=\beta)=\alpha$ and $\P(X=-\alpha)=\beta$, we have
\[
\E |X|^{p}=(\alpha+\beta)\E|X|^{p}=(\alpha+\beta)(\alpha\beta^{p}+\beta\alpha^{p})=\alpha^{2}\beta^{p}+\beta^{2}\alpha^{p}+\alpha\beta^{p+1}+\beta\alpha^{p+1},
\]
and thus
\begin{flalign*}
(\alpha\beta)^{-\frac{p+2}{2}}\E|X|^{p}
&=(\alpha/\beta)^{\frac{p-2}{2}}+(\beta/\alpha)^{\frac{p-2}{2}}+(\alpha/\beta)^{p/2}+(\beta/\alpha)^{p/2}\\
&=v^{2}+v^{-2}+v^{\frac{2p}{p-2}}+v^{-\frac{2p}{p-2}}=\left( v^{\frac{p}{p-2}}+v^{-\frac{p}{p-2}}\right)^{2}+\left( v-v^{-1}\right)^{2}\\
&=\left(\kappa(p)^{2}+1\right)\left(v-v^{-1}\right)^{2}=\left(\kappa(p)^{2}+1\right)
\left( (\alpha/\beta)^{\frac{p-2}{4}}-(\beta/\alpha)^{\frac{p-2}{4}}\right)^{2}\\
&=(\alpha\beta)^{-\frac{p+2}{2}}\left(\kappa(p)^{2}+1\right)\left( \alpha\beta^{p/2}-\beta\alpha^{p/2}\right)^{2}\\
&=(\alpha\beta)^{-\frac{p+2}{2}}\left(\kappa(p)^{2}+1\right)\left( \E X_{+}^{p/2} -\E X_{-}^{p/2} \right)^{2}.
\end{flalign*}
To see that the constant $C^{-1}/\left(1+\kappa(p)^{-2}\right)$ in the second claim is also optimal, consider $\Omega=\{-\alpha,\beta\}$ with
$\mu=\alpha\delta_{\beta}+\beta\delta_{-\alpha}$, $L=C^{-1} \cdot (Id-\E)$, and $f(x)=x$.  With this definition of $L$, equality holds in \eqref{ten1} for any function.  So, arguing as in the proof of Theorem \ref{main2}, we have $\E\phi_{p/2}(f)L\phi_{p/2}(f)=\E|f|^{p}-(\E f_{+}^{p/2}-\E f_{-}^{p/2})^2$, and we then use the equality that holds from the first part of the present corollary.
\end{proof}
\begin{remark}

We now show that the dependence on $p$ of Theorem \ref{main2} is of optimal order as $p\to1$ or as $p\to\infty$.  In what follows, $L$ is the generator of the standard one-dimensional Ornstein-Uhlenbeck semigroup and $\gamma$ is the standard ${\mathcal{N}}(0,1)$
Gaussian measure.

For $\eps>0$, let $g_{\eps}:\R \rightarrow \R$ be an increasing $2$-Lipschitz function, smooth on $\R \setminus \{ 0\}$ and such that
$g_{\eps}(x)=x$ for $|x|>\eps$ and $g_{\eps}=\phi_{3p/2}$ on some neighbourhood of zero.
%{\bf Comment: the existence of such a function is an easy calculus exercise, I guess we do not need to prove it in our paper}
Furthermore, let $f_{\eps}=\phi_{2/p} \circ g_{\eps}$. Then $f_{\eps}$ is a smooth function
and it belongs to the domain of $L$. We have
\[
\E_{\gamma}|f_{\eps}|^{p}=\E_{\gamma}g_{\eps}^{2} \geq \int_{\R \setminus [-\eps,\eps]} x^{2}\,\mathrm{d} \gamma(x) \stackrel{\eps \to 0^{+}}{\longrightarrow} 1
\]
and
\[
\frac{p^{2}}{4(p-1)}\cdot \E_{\gamma} \phi_{p-1}(f_{\eps})Lf_{\eps}=
\frac{p^{2}}{4(p-1)}\int_{\R} (\phi_{p-1} \circ f_{\eps})'f_{\eps}'\,\mathrm{d} \gamma=
\]
\[
\int_{\R} \left( \left( \phi_{p/2} \circ f_{\eps}\right)'\right)^{2} \mathrm{d} \gamma=
\int_{\R} (g_{\eps}')^{2} \mathrm{d} \gamma \leq \gamma(\R \setminus [-\eps,\eps])+4\gamma([-\eps,\eps]) \stackrel{\eps \to 0^{+}}{\longrightarrow} 1.
\]

\end{remark}

\begin{remark} \label{reminf}
%Infinite Spaces}\label{secinf}
The proof of Theorem \ref{main} used the finiteness of the space $\Omega$ in a nonessential way.  Infinite spaces require a bit more care, so we have chosen the above presentation.  However, the reader can find a proof of Theorem \ref{main} in Section \ref{secsemi} which applies to infinite spaces, and which only uses the semigroup itself.
%*** REMOVED reference to grigoryan paper, since it is in some obscure proceedings
%The proof of Theorem \ref{main} used the finiteness of the space $\Omega$ in a nonessential way.  Infinite spaces require a bit more care, so we have chosen the above presentation.
%The only place where finiteness was used was in \eqref{ten10}. A suitable replacement for this inequality appears e.g. in \cite[Eq. (2.7)]{ledoux99},
%where it is shown that for all $t>0$, there exist functions $p_{t}\colon\Omega\times\Omega\to[0,\infty]$ such that, for any
%%*** REPLACED "smooth" by
%regular enough
%%***BECAUSE it is too unclear what could be meant by "smooth" in this particular context
%functions $f,g\colon\Omega\to\R$, we have
%
%\begin{equation}\label{zero3}
%\E fLg=\lim_{t\to0^{+}}\frac{1}{2t}\int_{\Omega}\int_{\Omega}(f(x)-f(y))(g(x)-g(y))p_{t}(x,y)d\mu(x)d\mu(y).
%\end{equation}
\end{remark}

\medskip \noindent
To conclude the section, we prove Theorem~\ref{thm13}.
\begin{proof}[Proof of Theorem \ref{thm13}]
% \theta/2+(1-\theta)/p'=1/p.
%just let p'=2p, get
% \theta/2+(1-\theta)/(2p)=1/p, \theta/2-\theta/(2p)=1/p-(1/2)(1/p)=1/(2p), (1/2)\theta(1-1/p)=1/(2p), \theta(1-1/p)=1/p, \theta(p-1)=1, \theta=1/(p-1)
%1-\theta=(p-1)/(p-1)-1/(p-1)=(p-2)/(p-1)
% p(p-2)+p=p(p-1)
Recall that
\begin{equation}\label{six1}
\forall\,1\leq q\leq\infty,\quad
\vnormf{P_{t}f}_{q}\leq\vnorm{f}_{q}.
\end{equation}
Also, if $\E f W_{S}=0$ for all $S\subset\{1,\ldots,n\}$ with $\abs{S}<k$, then for all $t>0$,
\begin{equation}\label{six2}
\vnormf{P_{t}f}_{2}\leq e^{-tk}\vnorm{f}_{2}.
\end{equation}

Let $f\colon\{-1,1\}^{n}\to\{-1,0,1\}$. For such $f$ we have that $\E[|f|^p] = \E[|f|]$ for all $p$
%***ADDED the next line
and $\| P_{t}f\|_{\infty} \leq 1$ for all $t \geq 0$.
Now,
%***REPLACED "If" by
if
$p>2$, then

%***REMOVED (by \ignore) the following lines
\ignore{
H\"{o}lder's inequality says
\begin{flalign*}
\E\absf{P_{t}f}^{p}
&\leq(\E \abs{P_{t}f}^{2})^{\frac{p}{2(p-1)}}(\E\abs{P_{t}f}^{2p})^{\frac{p-2}{2(p-1)}}\\
&\stackrel{\eqref{six1}\wedge\eqref{six2}}{\leq} e^{-tkp/(p-1)}(\E\abs{f}^{2})^{\frac{p}{2(p-1)}}
(\E\abs{f}^{2p})^{\frac{p-2}{2(p-1)}}
= e^{-tkp/(p-1)}\E\abs{f}^{p}.
\end{flalign*}
}
%***AND REPLACED them by a simpler and stronger estimate (next line)
$$ \E|P_{t}f|^{p} \leq \E|P_{t}f|^{2} \leq e^{-2tk}\E f^{2}=e^{-2tk}\E|f|^{p}.$$
If $1<p<2$, then from H\"{o}lder's inequality,
%***REMOVED "again" and REMOVED unnecessary brackets around exponent p-1 in the formula below
$$
\E\abs{P_{t}f}^{p}
\leq(\E\abs{P_{t}f})^{2-p}(\E\abs{P_{t}f}^{2})^{p-1}
\stackrel{\eqref{six1}\wedge\eqref{six2}}{\leq}(\E\abs{f})^{2-p}e^{-tk2(p-1)}
(\E\abs{f}^{2})^{p-1}
=e^{-2tk(p-1)}\E\abs{f}^{p}.
$$
\end{proof}

\section{An Interpolation Proof of Heat Smoothing}\label{secnaz}

After the results of this paper were presented at some seminars, Fedor Nazarov informed the authors about an alternative complex interpolation proof of the strong contractivity bounds. With his kind permission, we present a simple version of such an approach, strongly inspired by Nazarov's proof but different from it. Nazarov proved that a perturbation of a Markov operator $P$ by an appropriately chosen small multiplicity of the expectation, i.e., $T=P-\delta \cdot \E$, is a contraction (a trick he ascribed to Bernstein), and his interpolation bounds were more elaborate than ours.

The $L^p$ spaces considered in this section are complex.

\begin{proposition}
\label{RTNazarov}
Let $(\Omega, \mu)$ be a probability space (for brevity, we leave simple measurability considerations to the reader) and let $P: \lj \to \lj$
be a linear operator with $P\1=\1$, $\| P\|_{\lj \to \lj} \leq 1$, and $\| P\|_{\li \to \li} \leq 1$. Also, we assume that $P$ is
mean preserving, i.e., $\E Pf=\E f$ for every $f \in \lj$. In particular, these conditions are satisfied by every, not necessarily symmetric, Markov operator $P$ for which $\mu$ is an invariant measure. Furthermore, let as assume that there exists $\eps \in [0,1)$ such that
$
\| Pg\|_{\ld}^{2} \leq (1-\eps)\| g\|_{\ld}^{2}
$
for every mean-zero $g$. Then, for $p \in (1,\infty)$,
\[
\| Pg\|_{\lp} \leq \left( 1-2^{2-p^{*}}\eps\right)^{1/p^{*}} \cdot \| g\|_{\lp}
\]
for every mean-zero $g$, where $p^{*}=\max\left( p, \frac{p}{p-1}\right)$.
\end{proposition}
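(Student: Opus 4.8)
The plan is to run a complex-interpolation argument organized around the ``Bernstein perturbation'' $T_{z}\colonequals P-\delta(z)\cdot\E$, where $\E$ is viewed as the projection onto the constants and $\delta$ is a bounded analytic function on the strip $S=\{z\in\C:0\le\Re z\le1\}$ still to be chosen. The observation that makes this work is that since $\E g=0$ one has $T_{z}g=Pg$ for \emph{every} $z$ and \emph{every} such $\delta$; hence it is enough to bound $\|T_{\theta}\|_{\lp\to\lp}$ for a well-chosen $\theta$, and to control $T_{z}$ only on the two edges of $S$, where the perturbation can be made harmless. I would split the argument according to the sign of $p-2$: for $p\ge2$ interpolate between $\ld$ on $\Re z=0$ and $\li$ on $\Re z=1$, so that $\lp$ sits at $\theta=1-2/p$; for $1<p<2$ interpolate instead between $\lj$ on $\Re z=0$ and $\ld$ on $\Re z=1$, so that $\lp$ sits at $\theta=2/p'$. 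The two cases are mirror images of one another, and in both the exponent governing the loss is $p^{*}=\max(p,p')$, so I describe $p\ge2$.

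The two edge estimates for $T_{z}$ are elementary. Writing $f=\E f+g_{0}$ with $g_{0}$ mean zero, one has $T_{z}f=(1-\delta(z))(\E f)\1+Pg_{0}$, and since $Pg_{0}\perp\1$ in $\ld$,
\[
\|T_{iy}f\|_{\ld}^{2}=\abs{1-\delta(iy)}^{2}\abs{\E f}^{2}+\|Pg_{0}\|_{\ld}^{2}\le\max\!\big(\abs{1-\delta(iy)}^{2},\,1-\eps\big)\,\|f\|_{\ld}^{2}
\]
by the Poincar\'e/spectral-gap hypothesis, so that $\|T_{iy}\|_{\ld\to\ld}\le\max(\abs{1-\delta(iy)},\sqrt{1-\eps})$; on the other edge $\|T_{1+iy}f\|_{\li}\le\|Pf\|_{\li}+\abs{\delta(1+iy)}\abs{\E f}\le(1+\abs{\delta(1+iy)})\|f\|_{\li}$. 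Since $\Omega$ is finite, $(T_{z})$ is an admissible analytic family, and Stein's interpolation theorem gives, for mean-zero $g$,
\[
\|Pg\|_{\lp}=\|T_{\theta}g\|_{\lp}\le M_{0}^{\,2/p}M_{1}^{\,1-2/p}\,\|g\|_{\lp},\qquad M_{0}=\sup_{y}\max(\abs{1-\delta(iy)},\sqrt{1-\eps}),\quad M_{1}=1+\sup_{y}\abs{\delta(1+iy)}.
\]

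The heart of the proof — and the step I expect to be the main obstacle — is to choose the analytic perturbation $\delta$ so that $M_{0}^{2}M_{1}^{p-2}\le1-2^{2-p}\eps$. A \emph{constant} $\delta$ does not suffice: demanding the full gain $M_{0}=\sqrt{1-\eps}$ forces $M_{1}\ge2-\sqrt{1-\eps}$, which yields only a gain of order $\eps^{2}$ and in fact exceeds $1$ once $p>4$. One must therefore let $\delta$ vary across $S$, keeping $1-\delta(iy)$ of modulus essentially $\sqrt{1-\eps}$ on $\Re z=0$ while pushing $\abs{\delta(1+iy)}$ down to the order $2^{-p}\eps$ on $\Re z=1$. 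The subtle point is that a bounded analytic function cannot stay close to a fixed nonzero value on one edge and be uniformly small on the other without its argument winding; so $\delta$ must be designed to trade a small, controlled loss in the $\ld$ bound against a large improvement of the $\li$ bound, exploiting the harmonic-measure weighting built into Stein's theorem rather than the crude suprema above, and balancing this trade-off against the interpolation weights $2/p$ and $1-2/p$ (respectively $(p'-2)/p'$ and $2/p'$ when $p<2$) is precisely what produces the exponent $2^{2-p^{*}}$. The rest — admissibility of the family, assembling the edge bounds, and verifying the resulting one-variable inequality for the chosen $\delta$ — I expect to be routine.
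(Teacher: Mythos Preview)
Your proposal follows the ``Bernstein perturbation'' $T_{z}=P-\delta(z)\E$ that the paper itself attributes to Nazarov, but it is incomplete: the entire content of such an argument is the construction of $\delta$, and you do not make one. You correctly observe that a constant $\delta$ fails---with $\delta=1-\sqrt{1-\eps}$ one gets $M_{0}^{2}M_{1}^{p-2}=(1-\eps)(2-\sqrt{1-\eps})^{p-2}>1$ for $p$ large---but then you only gesture at a non-constant $\delta$ and at ``harmonic-measure weighting'' without exhibiting a candidate or verifying any inequality. Within your own framework the only bound available on the $\li$ (resp.\ $\lj$) edge is the triangle inequality $M_{1}(y)=1+|\delta(1+iy)|\ge1$, so all the gain must be extracted from the $\ld$ edge; designing an analytic $\delta$ that makes the Stein bound come out to exactly $(1-2^{2-p^{*}}\eps)^{1/p^{*}}$ is precisely the crux you have left undone. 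As written this is a plan, not a proof.

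The paper's argument avoids this difficulty by a different device. Rather than perturbing $P$ on $\Omega$, it \emph{enlarges the target space}: set $\tOmega=\Omega\times\{0,1\}$ with $\tmu(A\times\{0\})=\mu(A)$, $\tmu(A\times\{1\})=c^{2}\eps\,\mu(A)$, and define a single operator $T\colon\lp\to\ltp$ by $(Tf)(\omega,0)=c\,(Pf)(\omega)$ and $(Tf)(\omega,1)=f(\omega)-\E f$. One checks directly that $\|T\|_{\ld\to\ltd}\le c$, $\|T\|_{\li\to\lti}\le\max(c,2)$, $\|T\|_{\lj\to\ltj}\le c+2c^{2}\eps$, and ordinary Riesz--Thorin (no analytic family needed) bounds $\|T\|_{\lp\to\ltp}$. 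The point is that for mean-zero $g$ one has the \emph{exact identity} $\|Tg\|_{\ltp}^{p}=c^{p}\|Pg\|_{\lp}^{p}+c^{2}\eps\|g\|_{\lp}^{p}$: the gain is stored additively in the second coordinate of $\tOmega$ rather than multiplicatively in an operator norm. Rearranging and choosing $c=2$ for $p\ge2$ (and $c=(2^{1/(p-1)}-2\eps)^{-1}$ for $p\le2$) yields $1-2^{2-p^{*}}\eps$ by a one-line algebraic check. This is why the paper's version is short and explicit where yours remains open-ended.
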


\begin{proof}
Let $c$ be a positive constant, to be specified later. We define a measure $\tmu$ on a new space $\tOmega=\Omega \times \{ 0,1\}$ by
setting $\tmu(A \times \{0\})=\mu(A)$ and $\tmu(A \times \{ 1\})=c^{2}\eps \cdot \mu(A)$, and a linear operator $T: \lj \to \ltj$ by
$(Tf)(\omega,0)=c \cdot (Pf)(\omega)$ and $(Tf)(\omega,1)=f(\omega)-\E f$ for $\omega \in \Omega$. Clearly,
$\| T\|_{\lj \to \ltj} \leq c+2c^{2}\eps$ and $\| T\|_{\li \to \lti} \leq \max(c,2)$.

For $f \in \ld$, let $g=f-\E f$, so that $\E g=0$.
Therefore also $\E Pg=0$ and thus
\[
\| Pf\|_{\ld}^{2}=\| Pg+\E f\|_{\ld}^{2}=\| Pg\|_{\ld}^{2}+|\E f|^{2}
\]
\[
\leq (1-\eps)\| g\|_{\ld}^{2}+|\E f|^{2}=
\| g+\E f\|_{\ld}^{2}-\eps\| g\|_{\ld}^{2}=\| f\|_{\ld}^{2}-\eps\| g\|_{\ld}^{2},
\]
so that
\[
\| Tf\|_{\ltd}^{2}=c^{2}\| Pf\|_{\ld}^{2}+c^{2}\eps\| g\|_{\ld}^{2} \leq c^{2}\| f\|_{\ld}^{2}.
\]
We have proved that $\| T\|_{\ld \to \ltd} \leq c$.

\bigskip

$\bullet$ For $p \in (1,2]$, by the Riesz-Thorin theorem,
$$
\| T\|_{\lp \to \ltp}
\leq \| T\|_{\lj \to \ltj}^{(2-p)/p} \| T\|_{\ld \to \ltd}^{(2p-2)/p} 
\leq(c+2c^{2}\eps)^{\frac{2-p}{p}} \cdot c^{\frac{2p-2}{p}}=c(1+2c\eps)^{\frac{2-p}{p}}.
$$
Thus for any $g \in \lp$ with $\E g=0$,
\[
c^{p}\| Pg\|_{\lp}^{p}+c^{2}\eps\| g\|_{\lp}^{p}=\| Tg\|_{\ltp}^{p} \leq c^{p}(1+2c\eps)^{2-p}\| g\|_{\lp}^{p}.
\]
Note that for $c=\left( 2^{\frac{1}{p-1}}-2\eps\right)^{-1}$, we have
\[
(1 + 2 c \eps)^{2-p}-c^{2-p} \eps = c^{2-p}((c^{-1} + 2 \eps)^{2-p} - \eps) = \frac{c^{1-p}}{2},
\]
and therefore
\[
\| Pg\|_{\lp} \leq \left( (1+2c\eps)^{2-p}-c^{2-p}\eps\right)^{1/p}\| g\|_{\lp}=
%\left( 1-2^{-\frac{2-p}{p-1}}\eps\right)^{p-1}\| g\|_{\lp}^{p}=
\left(1-2^{2-p^{*}}\eps\right)^{1/p^{*}}\| g\|_{\lp}.
\]

\bigskip

$\bullet$ For $p \in [2,\infty)$, we simply set $c=2$. Then we have $\| T\|_{\ld \to \ltd} \leq 2$ and $\| T\|_{\li \to \lti} \leq 2$,
and thus, by the Riesz-Thorin theorem, also $\| T\|_{\lp \to \ltp} \leq 2$. Therefore, for any mean-zero $g \in \lp$,
\[
2^{p}\| Pg\|_{\lp}^{p}+4\eps\| g\|_{\lp}^{p}=\| Tg\|_{\ltp}^{p} \leq 2^{p}\| g\|_{\lp}^{p},
\]
so that $\| Pg\|_{\lp} \leq \left( 1-2^{2-p}\eps\right)^{1/p}\|g\|_{\lp}=\left( 1-2^{2-p^{*}}\eps\right)^{1/p^{*}}\|g\|_{\lp}$.
\end{proof}

\begin{remark}
Obviously, if $P: \ld \to \ld$ is a Markov operator such that $\| Pg\|_{\ld}^{2} \leq (1-\eps)\| g\|_{\ld}^{2}$ for every real-valued mean-zero function $g$ then we have also $\| Pg\|_{\ld}^{2} \leq (1-\eps)\| g\|_{\ld}^{2}$ for every complex-valued mean-zero function $g$.
\end{remark}

\section{A Semigroup Proof of Heat Smoothing}\label{secsemi}
We were asked by experts in operator if the results of Section \ref{secpoincare} and Section \ref{secheat} can be extended to the setting of infinite probability spaces.
The proof in this setting, which does not use the notion of the semigroup generator, is outlined in the following section.

%Several experts in operator theory raised concerns whether the reasoning of Sections \ref{secpoincare} and \ref{secheat} can be extended to infinite probability space
%setting. Therefore we decided to outline a proof of heat smoothing which does not use the notion of the semigroup generator.

We will need some standard and simple bounds.
\begin{lemma}
\label{x1}
Let $(P_{s})_{s \geq 0}$ be a semigroup of symmetric linear contractions on some inner product space $(\cH, \| \cdot \|)$. Then for any $\eps, t>0$ we have
$\| P_{\eps+t}-P_{\eps}\|_{\cH \to \cH} \leq 2t/\eps$.

\end{lemma}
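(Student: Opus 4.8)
The plan is to reduce the operator–norm estimate to a one–dimensional inequality about the decay of $s \mapsto \|P_s x\|^2$, and then prove that inequality by an elementary chaining argument based on midpoint convexity.

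First I would use the semigroup law and symmetry to write $P_{\eps+t}-P_\eps = P_{\eps/2}\,(P_t-I)\,P_{\eps/2}$. Since $P_{\eps/2}$ and $P_t$ are symmetric, so is this operator, and hence $\| P_{\eps+t}-P_\eps\|_{\cH \to \cH} = \sup_{\|x\| \le 1}|\langle P_{\eps/2}(P_t-I)P_{\eps/2}x,\,x\rangle|$; the identity $\|A\| = \sup_{\|x\|\le 1}|\langle Ax,x\rangle|$ for symmetric $A$ holds on any inner product space by polarization, so completeness of $\cH$ is not needed. Putting $y=P_{\eps/2}x$ (so $\|y\|\le 1$) and using symmetry together with the semigroup law to write $\langle P_t y,y\rangle = \langle P_{t/2}y,P_{t/2}y\rangle$, one gets $\langle (P_t-I)y,y\rangle = \|P_{t/2}y\|^2-\|y\|^2 \le 0$. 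Writing $\phi_x(s):=\|P_sx\|^2$ and noting $\|P_{t/2}P_{\eps/2}x\|^2 = \|P_{(\eps+t)/2}x\|^2$, this yields
\[
\| P_{\eps+t}-P_\eps\|_{\cH \to \cH} = \sup_{\|x\| \le 1}(\phi_x(\eps/2)-\phi_x((\eps+t)/2)).
\]

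Next I would record three elementary properties of $\phi_x$ for fixed $x$ with $\|x\|\le 1$: (i) $0 \le \phi_x(s)\le 1$ for $s>0$ because $P_s$ is a contraction; (ii) $\phi_x$ is non-increasing, since $\|P_{s'}x\|=\|P_{s'-s}P_sx\|\le\|P_sx\|$ when $s'\ge s$; and (iii) $\phi_x$ is midpoint convex on $(0,\infty)$, because for $s_1,s_2>0$, using symmetry, the semigroup law, Cauchy--Schwarz, and $2ab\le a^2+b^2$,
\[
\phi_x((s_1+s_2)/2) = \langle P_{s_1}x,\,P_{s_2}x\rangle \le \|P_{s_1}x\|\,\|P_{s_2}x\| \le \tfrac12(\phi_x(s_1)+\phi_x(s_2)).
\]
The remaining task is the purely real-variable claim: if $\phi\colon(0,\infty)\to[0,1]$ is non-increasing and midpoint convex, then $\phi(a)-\phi(a+h)\le 2h/a$ for all $a,h>0$; applying this with $a=\eps/2$ and $h=t/2$ gives $\|P_{\eps+t}-P_\eps\|_{\cH\to\cH}\le 2t/\eps$. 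To prove the claim, if $2h\ge a$ it is immediate since $\phi(a)-\phi(a+h)\le\phi(a)\le 1\le 2h/a$. If $2h<a$, set $n:=\lfloor a/h\rfloor\ge 2$ and $x_j:=a+h-jh$ for $j=0,1,\dots,n$; these are positive and strictly decreasing. Midpoint convexity at each equally spaced triple $(x_{j-1},x_j,x_{j+1})$ gives $\phi(x_{j+1})-\phi(x_j)\ge\phi(x_j)-\phi(x_{j-1})\ge 0$, so the nonnegative increments $g_j:=\phi(x_{j+1})-\phi(x_j)$ are non-decreasing in $j$. Since $g_0=\phi(x_1)-\phi(x_0)=\phi(a)-\phi(a+h)$, summing and telescoping gives $n\,g_0 \le \sum_{j=0}^{n-1}g_j = \phi(x_n)-\phi(x_0)\le 1$, so $\phi(a)-\phi(a+h)=g_0\le 1/n\le 2h/a$, the last step using $\lfloor a/h\rfloor\ge (a/h)/2$ (valid since $a/h>2$).

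The point requiring care is that the lemma is stated for a general inner product space with neither completeness nor strong continuity of the semigroup, so I cannot invoke the infinitesimal generator, a holomorphic functional calculus, or the spectral theorem; this is why the argument goes through symmetry (needing only polarization) and through midpoint convexity of $\phi_x$ rather than genuine convexity (which would require a Bernstein--Doetsch-type regularity input). The bookkeeping in the chaining step --- the sign conventions and the check that all of the points $x_j$ remain in $(0,\infty)$ --- is routine, and the factor $2$ in $2t/\eps$, rather than the sharper $t/\eps$ one could in fact extract, is simply the slack absorbed by the crude estimate $\lfloor a/h\rfloor\ge(a/h)/2$.
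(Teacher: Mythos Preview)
Your proof is correct. Both arguments rest on the same three ingredients --- contractivity, the identity $\langle P_a x, P_b x\rangle = \|P_{(a+b)/2}x\|^2$ coming from symmetry plus the semigroup law, and a chaining over roughly $\eps/t$ steps of length $t$ --- but the packaging differs. The paper works directly with $h = (P_t - I)f$: it bounds $([\eps/t]+1)^2\|P_\eps h\|^2$ from above by the double sum $\sum_{k,l}\|P_{(k+l)t/2}h\|^2 = \bigl\|\sum_k P_{kt}h\bigr\|^2$, which telescopes to $\|P_{([\eps/t]+1)t}f - f\|^2 \leq 4\|f\|^2$. You instead pass through the numerical-range identity $\|A\| = \sup_{\|x\|\le 1}|\langle Ax,x\rangle|$ for the symmetric operator $A = P_{\eps+t}-P_\eps$, reduce to the scalar function $\phi_x(s)=\|P_sx\|^2$, isolate its midpoint convexity, and finish with a standalone real-variable lemma. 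Your route is more modular and makes the underlying convexity explicit; the paper's is terser and avoids the extra reduction step. Both land on exactly the same constant $2t/\eps$.
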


\begin{proof}
Indeed, for $f \in \cH$, let $h=P_{t}f-f$, so that $P_{\eps+t}f-P_{\eps}f=P_{\eps}h$. Then
\begin{flalign*}
&(\eps/t)^{2}\| P_{\eps+t}f-P_{\eps}f\|^{2} \leq
\sum_{k=0}^{[\eps/t]}\sum_{l=0}^{[\eps/t]} \| P_{\eps-\frac{k+l}{2}t}P_{\frac{k+l}{2}t}h\|^{2}
\leq \sum_{k=0}^{[\eps/t]}\sum_{l=0}^{[\eps/t]} \| P_{\frac{k+l}{2}t}h\|^{2}
=\sum_{k=0}^{[\eps/t]}\sum_{l=0}^{[\eps/t]} \langle P_{kt}h,P_{lt}h \rangle\\
&\quad=\left\langle \sum_{k=0}^{[\eps/t]} \left(P_{(k+1)t}f-P_{kt}f\right), \sum_{l=0}^{[\eps/t]} \left(P_{(l+1)t}f-P_{lt}f\right) \right\rangle
=\left\| P_{t[\eps/t]+t}f-f\right\|^{2} \leq (2\|f\|)^{2}.
\end{flalign*}
\end{proof}

The following is a weak version of the Stroock-Varopoulos inequality.
\begin{lemma}
\label{x2}
Let $P$ be a symmetric Markov operator on a probability space $(\Omega,\mu)$. Then, for any $f \in \li$ and $p \in (1,\infty)$,
\[
(p-2)^{2}\,\E|f|^{p}+4(p-1)\E \phi_{p/2}(f)P\left(\phi_{p/2}(f)\right) \geq p^{2}\,\E \phi_{p-1}(f)Pf.
\]
\end{lemma}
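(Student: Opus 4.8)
The plan is to mimic the proof of the Stroock-Varopoulos inequality (Lemma \ref{lemma13}), but to absorb the pointwise integration-by-parts step into a single scalar inequality and then apply the positivity of the Markov operator $P$. First I would recall the representation, valid for any Markov operator $P$ on $(\Omega,\mu)$ with invariant measure $\mu$, that for $g,h \in \li$,
\[
\E g Ph = \tfrac12 \sum_{x,y} K(x,y)\big(g(x)-g(y)\big)\big(h(x)-h(y)\big) + \E g \cdot \E h \cdot(\text{diagonal correction}),
\]
or more cleanly: since $P\1=\1$, one has $\E\, g\, (Id-P)h = \tfrac12\sum_{x,y}K(x,y)(g(x)-g(y))(h(x)-h(y))$ with $K(x,y)=\mu(\{x\})(P\1_{\{y\}})(x)\ge 0$. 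Applying this to both $g=h=\phi_{p/2}(f)$ and to $g=\phi_{p-1}(f),\,h=f$, the claimed inequality reduces, after collecting the ``$P$'' terms and the ``$Id$'' terms separately, to two pointwise claims: one comparing $\phi_{p-1}(f)f$ with $|f|^p$ and with $\phi_{p/2}(f)^2=|f|^p$ (these are in fact equalities, since $\phi_{p-1}(f)f=|f|^{p}$ and $\phi_{p/2}(f)^2=|f|^p$), and one two-variable inequality of the form
\[
(p-2)^2\,\tfrac{|a|^p+|b|^p}{2} \;+\; 4(p-1)\,(\phi_{p/2}(a)-\phi_{p/2}(b))^2 \;\geq\; p^2\,(\phi_{p-1}(a)-\phi_{p-1}(b))(a-b),
\]
which must hold for all real $a,b$ so that, after multiplying by $K(x,y)\ge0$ and summing, the off-diagonal contributions line up.

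The main work is therefore the scalar two-variable inequality above; this is where I expect the real obstacle to be, since the Stroock-Varopoulos inequality (Lemma \ref{lemma13}) goes the ``wrong way'' for a direct substitution — it bounds $(\phi_{p-1}(a)-\phi_{p-1}(b))(a-b)$ \emph{below} by $(\phi_{p/2}(a)-\phi_{p/2}(b))^2$, whereas here we need an \emph{upper} bound on $(\phi_{p-1}(a)-\phi_{p-1}(b))(a-b)$. I would handle it by homogeneity (the inequality is homogeneous of degree $p$), reducing to $b=1$ (treating $a=0$, $b>0$, and sign cases separately), and then to a one-variable calculus inequality in $a \ge 0$ that can be checked by examining the function and its derivatives, or by a Cauchy-Schwarz / convexity argument on the integral representation $\phi_{p-1}(a)-\phi_{p-1}(b)=(p-1)\int_b^a|t|^{p-2}\,dt$ combined with the quadratic defect $|a|^p+|b|^p \ge \tfrac12(|a|^{p/2}+|b|^{p/2})^2 \ge \cdots$. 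The key identity to exploit is $\phi_{p/2}(a)-\phi_{p/2}(b)=\tfrac{p}{2}\int_b^a |t|^{p/2-1}\,dt$, so that $(\phi_{p/2}(a)-\phi_{p/2}(b))^2 \le \tfrac{p^2}{4}|a-b|\int_b^a|t|^{p-2}\,dt = \tfrac{p^2}{4(p-1)}(\phi_{p-1}(a)-\phi_{p-1}(b))(a-b)$ by Cauchy-Schwarz — but that is again Lemma \ref{lemma13}; the extra $(p-2)^2\E|f|^p$ slack on the left is precisely what lets one reverse direction when $p\ne 2$, and matching it carefully is the crux.

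Once the scalar inequality is in hand, the assembly is routine: multiply by $K(x,y)\ge 0$, sum over $x,y$, and use that the ``$Id$'' parts of $\E\phi_{p/2}(f)P(\phi_{p/2}(f))$ and of $\E\phi_{p-1}(f)Pf$ both equal $\E|f|^p$ so that the coefficient $(p-2)^2 + 4(p-1)=p^2$ makes the diagonal terms cancel exactly, leaving precisely the stated inequality. The hypothesis $f\in\li$ guarantees all the integrals converge and justifies the finite-sum manipulations (or their measure-theoretic analogue). I would also remark that for $p=2$ the inequality degenerates to the trivial $4\E\phi_1(f)Pf \ge 4\E\phi_1(f)Pf$, consistent with the fact that $\phi_{p/2}=\phi_{p-1}=\mathrm{id}$ there.
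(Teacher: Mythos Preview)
Your overall strategy is sound, but the bookkeeping in the reduction is off, and this leads you to state a two-variable inequality that is simply false. Take $a=1$, $b=0$: your displayed inequality becomes $(p-2)^{2}/2 + 4(p-1) \geq p^{2}$, i.e.\ $0 \geq (p-2)^{2}/2$, which fails for every $p\neq 2$. The mistake is that once you write $\E gPh = \E gh - \tfrac12\sum K(x,y)(g(x)-g(y))(h(x)-h(y))$ and use $(p-2)^{2}+4(p-1)=p^{2}$ to cancel the $\E|f|^{p}$ contributions (as you correctly observe in your last paragraph), the $(p-2)^{2}\E|f|^{p}$ term is \emph{gone}; there is no residual $(p-2)^{2}\tfrac{|a|^{p}+|b|^{p}}{2}$ left for the off-diagonal part. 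What actually remains, with the correct signs, is precisely
\[
p^{2}\,(\phi_{p-1}(a)-\phi_{p-1}(b))(a-b)\;\geq\;4(p-1)\,(\phi_{p/2}(a)-\phi_{p/2}(b))^{2},
\]
which is Lemma~\ref{lemma13} verbatim. So Stroock--Varopoulos does not go the ``wrong way'' at all --- it is exactly what you need, and the calculus detour you sketch is unnecessary.

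The paper's proof reaches the same scalar content but packages it so as to avoid any kernel representation, which matters because the lemma is stated for a general probability space $(\Omega,\mu)$ where no pointwise kernel $K(x,y)$ need exist. The trick is to expand Lemma~\ref{lemma13} into the bilinear form
\[
(p-2)^{2}\bigl(|a|^{p}+|b|^{p}\bigr) + 8(p-1)\,\phi_{p/2}(a)\phi_{p/2}(b)\;\geq\;p^{2}\bigl(a\,\phi_{p-1}(b)+\phi_{p-1}(a)\,b\bigr),
\]
freeze $a$, apply the positivity-preserving operator $P$ in the $b$-variable (this uses only that $P$ is linear and positive), then substitute $a=f(\omega)$ and take expectations, with symmetry of $P$ handling the cross terms. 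This is the route you should adopt; your Dirichlet-form version is equivalent in the discrete case once the sign error is fixed, but does not literally apply in the generality of the statement.
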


\begin{proof}
Upon simple algebraic transformations, Lemma \ref{lemma13} implies that, $\mu$-a.e.,
\[
\forall_{a \in \Q}\,\,\,
(p-2)^{2}(|a|^{p}+|f|^{p})+8(p-1)\phi_{p/2}(a)\phi_{p/2}(f) \geq p^{2}\left(a\phi_{p-1}(f)+\phi_{p-1}(a)f\right).
\]
Since $P$ is linear and positivity preserving, $\mu$-a.e. we have also
$$
\forall_{a \in \Q}\,\,\,
(p-2)^{2}\left(|a|^{p}+P(|f|^{p})\right)+8(p-1)\phi_{p/2}(a)P\left(\phi_{p/2}(f)\right)\geq p^{2}\left(aP\left(\phi_{p-1}(f)\right)+\phi_{p-1}(a)Pf\right).
$$
By the continuity in $a$, the same holds true $\mu$-a.e. when $\forall_{a \in \Q}$ is replaced by $\forall_{a \in \R}$.
In particular, $\mu$-a.e.,
$$
(p-2)^{2}\left(|f|^{p}+P(|f|^{p})\right)+8(p-1)\phi_{p/2}(f)P\left(\phi_{p/2}(f)\right)
\geq p^{2}\left(fP\left(\phi_{p-1}(f)\right)+\phi_{p-1}(f)Pf\right).
$$
We finish by taking the expectation of both sides and using the symmetry of $P$ (together with the fact that symmetric Markov operators
are mean-preserving).
\end{proof}

\begin{definition} \label{x4}
Let $(P_{t})_{t \geq 0}$ be a symmetric Markov semigroup on a probability space $(\Omega,\mu)$. For $M, \eps>0$ we will denote by $\cC(M,\eps)$
the class of all functions of the form $P_{\eps}\vp$, where $\vp \in L^{\infty}(\Omega,\mu)$ and $\| \vp\|_{\infty} \leq M$.
%Note that if $f \in \cC(M,\eps)$ then $P_{t}f \in \cC(M,\eps)$ for every $t>0$.
Furthermore, for $p \in (1,\infty)$ we set
\[
\alpha_{p,M,\eps}(t)=\sup_{f \in \cC(M,\eps)} \left( (p-1)\E|f|^{p}+\E|P_{t}f|^{p}-p\E\phi_{p-1}(f)P_{t}f\right).
\]
\end{definition}

\begin{lemma} \label{x5}
For any $p \in (1,\infty)$ and $M,\eps>0$, we have $\alpha_{p,M,\eps}(t)/t \to 0$ as $t \to 0^{+}$.
\end{lemma}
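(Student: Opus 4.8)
The plan is to bound, uniformly over $f\in\cC(M,\eps)$, the quantity inside the supremum defining $\alpha_{p,M,\eps}(t)$ by a constant multiple of $t^{\min(p,2)}$; since $\min(p,2)>1$ for every $p\in(1,\infty)$, this immediately gives $\alpha_{p,M,\eps}(t)/t\to0$. (That $\alpha_{p,M,\eps}(t)\geq0$ is clear by testing on $f\equiv0$, which lies in $\cC(M,\eps)$.) The starting point is to recognize the test quantity as a Taylor remainder. Fix $f=P_{\eps}\vp$ with $\|\vp\|_{\infty}\leq M$ and set $h\colonequals P_{t}f-f$. Using $\phi_{p-1}(f)\cdot f=|f|^{p}$ one checks the identity
\[
(p-1)\E|f|^{p}+\E|P_{t}f|^{p}-p\E\phi_{p-1}(f)P_{t}f
=\E\big(|f+h|^{p}-|f|^{p}-p\,\phi_{p-1}(f)\,h\big),
\]
i.e.\ the integrand on the right is the first-order Taylor remainder of $\Psi(x)\colonequals|x|^{p}$ (whose derivative is $\Psi'=p\phi_{p-1}$) at the point $f$ with increment $h$.

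The second ingredient is an elementary pointwise convexity estimate for $\Psi$ on a bounded interval: for every $R>0$ there is a constant $C=C(p,R)$ such that
\[
\big||y|^{p}-|x|^{p}-p\,\phi_{p-1}(x)(y-x)\big|\leq C\,|y-x|^{\min(p,2)}\qquad\text{for all }x,y\in[-R,R].
\]
One proves this by writing the left-hand side as $p\int_{0}^{1}\big(\phi_{p-1}(x+s(y-x))-\phi_{p-1}(x)\big)(y-x)\,\mathrm{d}s$ and using that $\phi_{p-1}$ is Lipschitz on $[-R,R]$ when $p\geq2$ (its derivative $(p-1)|x|^{p-2}$ is bounded there) and globally $(p-1)$-Hölder continuous when $1<p<2$. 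Since $\|f\|_{\infty}=\|P_{\eps}\vp\|_{\infty}\leq M$ and $\|P_{t}f\|_{\infty}=\|P_{\eps+t}\vp\|_{\infty}\leq M$, one applies this with $R=M$, $x=f(\omega)$, $y=(P_{t}f)(\omega)$, and integrates against $\mu$ to obtain, uniformly over $f\in\cC(M,\eps)$,
\[
(p-1)\E|f|^{p}+\E|P_{t}f|^{p}-p\E\phi_{p-1}(f)P_{t}f\;\leq\;C(p,M)\,\E|h|^{\min(p,2)}.
\]

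The third step controls $\E|h|^{\min(p,2)}$ through quantitative continuity of the semigroup. Applying Lemma~\ref{x1} with $\cH=\ld$ to the symmetric contraction semigroup $(P_{s})_{s\geq0}$ gives
\[
\|h\|_{2}=\|(P_{\eps+t}-P_{\eps})\vp\|_{2}\leq\frac{2t}{\eps}\,\|\vp\|_{2}\leq\frac{2Mt}{\eps}.
\]
If $p\geq2$ then $\E|h|^{\min(p,2)}=\|h\|_{2}^{2}\leq(2Mt/\eps)^{2}$. If $1<p<2$ then, by Jensen's inequality applied to the concave function $s\mapsto s^{p/2}$, $\E|h|^{p}=\E(|h|^{2})^{p/2}\leq\|h\|_{2}^{p}\leq(2Mt/\eps)^{p}$. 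In either case $\E|h|^{\min(p,2)}\leq(2Mt/\eps)^{\min(p,2)}$, and since this bound is uniform in $f$, taking the supremum yields
\[
0\leq\alpha_{p,M,\eps}(t)\leq C(p,M)\,(2M/\eps)^{\min(p,2)}\,t^{\min(p,2)},
\]
so that $\alpha_{p,M,\eps}(t)/t\to0$ as $t\to0^{+}$.

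The only genuine subtlety is the regularity of $x\mapsto|x|^{p}$ near the origin when $1<p<2$: there is no uniform bound $C\,(y-x)^{2}$ on the Taylor remainder, which is precisely why the argument produces $t^{\min(p,2)}$ rather than $t^{2}$. This is handled by combining the $(p-1)$-Hölder continuity of $\phi_{p-1}$ with Jensen's inequality, which trades the missing second-order regularity for a power of $\|P_{t}f-f\|_{2}$ with exponent still strictly larger than $1$ — enough to beat the linear factor $t$. Everything else (the algebraic identity, the integral-representation estimate, and the uniform $L^{\infty}$ and $L^{2}$ bounds on the members of $\cC(M,\eps)$) is routine.
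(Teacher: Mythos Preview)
Your proof is correct and follows essentially the same approach as the paper: both arguments recognize the quantity as the first-order Taylor remainder of $|x|^{p}$, bound it pointwise by $C(p,M)\,|P_{t}f-f|^{\min(p,2)}$ (the paper via the constants $\rho_{p}$, you via H\"older/Lipschitz continuity of $\phi_{p-1}$), and then invoke Lemma~\ref{x1} together with Jensen for $1<p<2$ to get the $t^{\min(p,2)}$ decay.
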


\begin{proof}
Let $f \in \cC(M,\eps)$, i.e., $f=P_{\eps}\vp$ and $\|\vp\|_{\infty} \leq M$.
Let $\rho_{p}=\sup_{s \neq 1} \frac{|s|^{p}-ps+p-1}{(|s|+1)^{p-2}(s-1)^{2}}$ for $p>2$, while for $p \in (1,2]$ let
$\rho_{p}=\sup_{s \neq 1}\frac{|s|^{p}-ps+p-1}{|s-1|^{p}}$. Note that $\rho_{p}<\infty$. By the homogeneity,
$$
(p-1)\E|f|^{p}+\E|P_{t}f|^{p}-p\E\phi_{p-1}(f)P_{t}f
\leq \rho_{p}\,\E(|P_{t}f|+|f|)^{p-2}(P_{t}f-f)^{2} 
\leq (2M)^{p-2}\rho_{p}\cdot \E(P_{t}f-f)^{2},
$$
for $p>2$, and for $p \in (1,2]$ we have
\[
(p-1)\E|f|^{p}+\E|P_{t}f|^{p}-p\E\phi_{p-1}(f)P_{t}f \leq \rho_{p}\,\E|P_{t}f-f|^{p} \leq \rho_{p}\cdot \left( \E(P_{t}f-f)^{2}\right)^{p/2}.
\]
We finish by Lemma~\ref{x1}: $\E(P_{t}f-f)^{2}=\E(P_{t+\eps}\vp-P_{\eps}\vp)^{2} \leq (2t/\eps)^{2}\E\vp^{2} \leq (2M/\eps)^{2}t^{2}$.
\end{proof}

Now we are in a position to recover Remark \ref{rk33} by a purely semigroup approach:

\begin{proposition} \label{x6}

Let $(P_{t})_{t \geq 0}$ be a symmetric Markov semigroup on a probability space $(\Omega, \mu)$. We also assume that it is a $C_{0}$-semigroup
on $L^{q}(\Omega,\mu)$ for some $q \in [1,\infty)$, i.e., $\| P_{t}g-g\|_{q} \to 0$ as $t \to 0^{+}$ for every $g \in L^{q}(\Omega, \mu)$.
Furthermore, let us assume that there exists a positive constant $C$ such that $\| P_{t}f\|_{2} \leq e^{-t/C}\|f\|_{2}$ for every mean-zero function
$f \in L^{2}(\Omega,\mu)$ and every $t>0$. Then, for every $p \in (1,\infty)$, $t>0$, and any mean-zero $f$,
\begin{equation} \label{semigr}
\| P_{t}f\|_{p} \leq \exp\left(-\frac{(4p-4)t}{Cp^{2}(1+\kappa(p)^{-2})}\right) \cdot \| f\|_{p}.
\end{equation}

\begin{proof}
Let $\cC_{0}(M,\eps)=\{ f \in \cC(M,\eps): \E f=0 \}$ and let $L^{p}_{0}(\Omega,\mu)$ be the subspace of mean-zero functions in $L^{p}(\Omega,\mu)$.
The $C_{0}$-semigroup condition implies that $\bigcup_{M,\eps>0} \cC_{0}(M,\eps)$ is dense in $L^{p}_{0}(\Omega,\mu)$. Indeed, for
$\vp \in L^{\infty}_{0}(\Omega,\mu)$ by assumption we have $P_{\eps}\vp \stackrel{\eps \to 0^{+}}{\to} \vp$ in $L^{q}(\Omega,\mu)$, and thus also
in $L^{p}(\Omega, \mu)$, since $\E|P_{\eps}\vp-\vp|^{p} \leq (2\| \vp\|_{\infty})^{p-q}\E|P_{\eps}\vp-\vp|^{q}$ if $p>q$ and
$\| P_{\eps}\vp-\vp\|_{p} \leq \| P_{\eps}\vp-\vp\|_{q}$ if $p \leq q$. Now it suffices to note that
bounded mean-zero functions are dense in $L_{0}^{p}(\Omega,\mu)$ and $P_{\eps}\vp \in \cC_{0}(\| \vp\|_{\infty}, \eps)$ .

Since, as a contraction, $P_{t}$ is uniformly continuous on $L^{p}(\Omega,\mu)$, it is enough to prove the assertion for $f \in \cC_{0}(M, \eps)$ for every $M, \eps>0$.
By assumption,
\[
\E\phi_{p/2}(f)P_{t}\left(\phi_{p/2}(f)\right)-\left(\E \phi_{p/2}(f)\right)^{2}=
\left\| P_{t/2}\left(\phi_{p/2}(f)
-\E\phi_{p/2}(f)\right)\right\|_{2}^{2}
\]
\[
\leq
e^{-t/C}\left\| \phi_{p/2}(f)
-\E\phi_{p/2}(f)\right\|_{2}^{2}=
e^{-t/C} \cdot \left( \E|f|^{p}-\left(\E \phi_{p/2}(f)\right)^{2}\right).
\]

By Corollary \ref{cor4}, $\left( \E\phi_{p/2}(f)\right)^{2} \leq
\E|f|^{p}/\left( \kappa(p)^{2}+1\right)$. These inequalities, together with Lemma~\ref{x2} and Definition~\ref{x4}, yield
\[
\E|P_{t}f|^{p} \leq
\left( 1-\frac{(4p-4)(1-e^{-t/C})}{(1+\kappa(p)^{-2})p}\right)
\cdot \E|f|^{p}+\alpha_{p,M,\eps}(t).
\]
Thus, for positive integers $k$ and $n$, by a simple induction on
$k$, we have
\[
\E|P_{kt/n}f|^{p} \leq
\left( 1-\frac{(4p-4)(1-e^{-(t/n)/C})}{(1+\kappa(p)^{-2})p}\right)^{k}
\cdot \E|f|^{p}+k\alpha_{p,M,\eps}(t/n)
\]
--~it suffices to consider $t/n$ instead of $t$ and note that $f \in \cC_{0}(M,\eps)$ implies
$P_{s}f \in \cC_{0}(M,\eps)$ for all $s \geq 0$. Taking $k=n$ and $n \to \infty$ ends the proof since, by Lemma~\ref{x5},
$n\alpha_{p,M,\eps}(t/n) \to 0$.

\end{proof}

\begin{remark} \label{x7}
Some authors include the $C_{0}$-semigroup assumption for $q=1$ into the very definition of Markov semigroups. It is easy to prove that
a Markov semigroup is a $C_{0}$-semigroup on $L^{q}(\Omega,\mu)$ for some $q \in [1,\infty)$ if and only if it  is a $C_{0}$-semigroup on
$L^{q}(\Omega,\mu)$ for every $q \in [1,\infty)$. In many cases it is convenient to test the property for $q=2$. In particular, if
$(P_{t})_{t \geq 0}$ can be represented, by means of functional calculus, as $e^{-tL}$ for some positive semidefinite self-adjoint operator
$L$ on $L^{2}(\Omega,\mu)$, this property (for $q=2$) easily follows from the spectral theorem.
\end{remark}

For $f$ belonging to the domain of the generator $L$, we obtain the $L^{p}$ Poincar\'e inequality simply by differentiating (\ref{semigr}) at zero.

\end{proposition}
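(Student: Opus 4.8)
The plan is to reduce the desired decay estimate to a dense subclass of ``smoothed'' functions, prove on that subclass a single inequality describing how $\E|P_tf|^p$ decreases in one time-step, and then iterate this inequality over $n$ steps of length $t/n$ and send $n\to\infty$.

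First I would set up the reduction. Write $L^p_0(\Omega,\mu)$ for the mean-zero subspace and $\cC_0(M,\eps)=\{f\in\cC(M,\eps):\E f=0\}$, in the notation of Definition~\ref{x4}. Since $P_\eps$ is an $L^\infty$-contraction and the semigroup is $C_0$ on $L^q$, for bounded mean-zero $\vp$ one has $P_\eps\vp\to\vp$ in $L^q$ and hence in $L^p$ (interpolating $\|\cdot\|_\infty$ with $\|\cdot\|_q$ when $p>q$, or using $\|\cdot\|_p\le\|\cdot\|_q$ when $p\le q$); as bounded mean-zero functions are dense in $L^p_0(\Omega,\mu)$, the union $\bigcup_{M,\eps>0}\cC_0(M,\eps)$ is dense there. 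Because $P_t$ is an $L^p$-contraction, it then suffices to prove (\ref{semigr}) for $f\in\cC_0(M,\eps)$ with $M,\eps>0$ fixed but arbitrary; I will also use that $P_sf\in\cC_0(M,\eps)$ for every $s\ge0$ and every such $f$, which is clear from the semigroup and $L^\infty$-contraction properties.

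The heart of the argument is the one-step inequality. Fix $f\in\cC_0(M,\eps)$ (so $f\in L^\infty$). Apply Lemma~\ref{x2} with the symmetric Markov operator $P=P_t$, and use symmetry together with the semigroup law to rewrite $\E\phi_{p/2}(f)P_t(\phi_{p/2}(f))=\|P_{t/2}\phi_{p/2}(f)\|_2^2$. Splitting off the mean $m:=\E\phi_{p/2}(f)$ and applying the $L^2$ decay hypothesis to the mean-zero part $\phi_{p/2}(f)-m$ gives $\|P_{t/2}\phi_{p/2}(f)\|_2^2\le e^{-t/C}\big(\E|f|^p-m^2\big)+m^2=e^{-t/C}\E|f|^p+(1-e^{-t/C})m^2$, and Corollary~\ref{cor4} bounds $m^2\le\E|f|^p/(\kappa(p)^2+1)$. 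Inserting these into Lemma~\ref{x2}, using the identity $(p-2)^2+4(p-1)=p^2$ to simplify, and unwinding through Definition~\ref{x4} (which yields $\E|P_tf|^p\le p\,\E\phi_{p-1}(f)P_tf-(p-1)\E|f|^p+\alpha_{p,M,\eps}(t)$), I obtain
\[
\E|P_tf|^p\le\Big(1-\frac{(4p-4)(1-e^{-t/C})}{(1+\kappa(p)^{-2})\,p}\Big)\E|f|^p+\alpha_{p,M,\eps}(t).
\]

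Finally I would iterate. Replacing $t$ by $t/n$ and using $P_sf\in\cC_0(M,\eps)$, an induction on $k$ gives $\E|P_{kt/n}f|^p\le\lambda_n^k\,\E|f|^p+k\,\alpha_{p,M,\eps}(t/n)$, where $\lambda_n:=1-\frac{(4p-4)(1-e^{-(t/n)/C})}{(1+\kappa(p)^{-2})\,p}$. Taking $k=n$ and letting $n\to\infty$: since $n\big(1-e^{-(t/n)/C}\big)\to t/C$, one has $\lambda_n^n\to\exp\big(-\frac{(4p-4)t}{Cp(1+\kappa(p)^{-2})}\big)$, while $n\,\alpha_{p,M,\eps}(t/n)\to0$ by Lemma~\ref{x5}; hence $\E|P_tf|^p\le\exp\big(-\frac{(4p-4)t}{Cp(1+\kappa(p)^{-2})}\big)\E|f|^p$ on the dense class, thus for every mean-zero $f$, and taking $p$-th roots gives (\ref{semigr}). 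I expect the main obstacle to be the error-term bookkeeping: one needs the single function $\alpha_{p,M,\eps}$ to bound all $n$ steps uniformly (this is exactly why we restrict to $\cC_0(M,\eps)$ and exploit $P_sf\in\cC_0(M,\eps)$), and one needs the $o(t)$ bound of Lemma~\ref{x5} so that the accumulated error $n\,\alpha_{p,M,\eps}(t/n)$ vanishes in the limit; the algebraic identity $(p-2)^2+4(p-1)=p^2$ and the elementary asymptotics of $\lambda_n^n$ are then routine.
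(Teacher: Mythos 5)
Your proposal is correct and follows essentially the same route as the paper: reduce by density to the smoothed class $\cC_{0}(M,\eps)$, derive the one-step decay from Lemma~\ref{x2}, the $L^{2}$ spectral-gap hypothesis applied to $\phi_{p/2}(f)-\E\phi_{p/2}(f)$, and Corollary~\ref{cor4}, then iterate over $n$ steps of length $t/n$ and use Lemma~\ref{x5} to kill the accumulated error $n\,\alpha_{p,M,\eps}(t/n)$. The algebraic simplification via $(p-2)^{2}+4(p-1)=p^{2}$ and the limit $\lambda_{n}^{n}\to\exp\bigl(-\tfrac{(4p-4)t}{Cp(1+\kappa(p)^{-2})}\bigr)$ are exactly as in the paper.
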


\section{Talagrand's Inequality for Tail Space}\label{sectal}

\begin{proof}[Proof of Theorem \ref{thm4}]

%[just use theta=1/2]
The argument follows the one in \cite{ledoux12}.  Let $f\colon\{-1,1\}^{n}\to\R$ with $\E f W_{S}=0$ for all $S\subset\{1,\ldots,n\}$ with $\abs{S}<k$.
%*** REMOVED the proof below by \ignore and REPLACED it by a slightly different one
\ignore{
Let $1<p<2$, and let $t>0$ satisfy $e^{-t}= p-1 $.  Then from the usual hypercontractive inequality \cite{bonami70,nelson73,gross75}, for any $f\colon\{-1,1\}^{n}\to\R$,

\begin{equation}\label{four55}
\vnormf{P_{t/2}f}_{2}\leq\vnorm{f}_{p}.
\end{equation}

Using the semigroup property, and that $\E W_{S}P_{t/2}f=0$, for all $S\subset\{1,\ldots,n\}$ with $\abs{S}<k$,

\begin{equation}\label{zero60}
\vnormf{P_{t}f}_{2}
=\vnormf{P_{t/2}P_{t/2}f}_{2}\
\stackrel{\eqref{six2}}{\leq} e^{-tk/2}\vnormf{P_{t/2}f}_{L_{2}(\gamma_{n})}
\stackrel{\eqref{four55}}{\leq} e^{-tk/2}\vnorm{f}_{p}.
\end{equation}
From H\"{o}lder's inequality,
\begin{equation}\label{five1}
\E\abs{g}^{p}
=\E\abs{g}^{2p-2}\abs{g}^{2-p}
\leq(\E\abs{g}^{2})^{p-1}(\E\abs{g})^{2-p}.
\end{equation}

%(1-e^{-t})/(1+e^{-t})=(2-p)/p
% 1-e^{-t}=2-p
%p=1+e^{-t}
%
%1+e^{-t}=p
Recall that $p=1+e^{-t}$, so $(1-e^{-t})/(1+e^{-t})=(2-p)/p$.  Using Cauchy-Schwarz,
\begin{equation}\label{five2}
\begin{aligned}
&\E gP_{t}g
\leq \vnorm{g}_{2}\vnormf{P_{t}g}_{2}
\stackrel{\eqref{zero60}}{\leq}\vnorm{g}_{2}e^{-tk/2}\vnorm{g}_{p}
\stackrel{\eqref{five1}}{\leq} e^{-tk/2}\vnorm{g}_{2}^{2}\left(\frac{\vnorm{g}_{1}}{\vnorm{g}_{2}}\right)^{(2-p)/p}\\
&\qquad=e^{-tk/2}\vnorm{g}_{2}^{2}\left(\frac{\vnorm{g}_{1}}{\vnorm{g}_{2}}\right)^{\frac{1-e^{-t}}{1+e^{-t}}}
\leq e^{-tk/2}\vnorm{g}_{2}^{2}\left(\frac{\vnorm{g}_{1}}{\vnorm{g}_{2}}\right)^{1-e^{-t/2}}.
\end{aligned}
\end{equation}
%(1-u^{2})/(1+u^{2})=(1-u)(1+u)/(1+u^{2})\geq 1-u
%$$\vnorm{f}_{2}^{2}-\vnorm{P_{t}f}_{2}^{2}\geq\vnorm{f}_{2}^{2}-e^{-2\lambda t}\vnorm{f}_{2}^{2}=(1-e^{-2\lambda t})\vnorm{f}_{2}^{2}$$
%$$\vnorm{f}_{2}^{2}-\vnorm{P_{t}f}_{2}^{2}=\E\int_{0}^{t}(d/dt)(P_{t}f)^{2}$$
From Fourier analysis, for all $s>0$
\begin{equation}\label{five2.4}
\E f^{2}-(\E f)^{2}\leq\frac{1}{1-e^{-s}}[\vnorm{f}_{2}^{2}-\vnorm{P_{s}f}_{2}^{2}].
\end{equation}

\begin{equation}\label{five3}
\begin{aligned}
\E f^{2}-(\E f)^{2}
&\stackrel{\eqref{five2.4}}{\leq}-2\int_{0}^{1}\frac{d}{dt}\E(P_{t}f)^{2}
=\int_{0}^{1}\E P_{t}fLP_{t}fdt
=2\int_{0}^{1}\E P_{t}fLP_{t}fdt\\
&=2\int_{0}^{1}\sum_{i=1}^{n}\E (D_{i}P_{t}f)^{2}
\leq2\int_{0}^{1}\sum_{i=1}^{n}\E (P_{t}D_{i}f)^{2}\\
&\qquad\stackrel{\eqref{five2}}{\leq}2\sum_{i=1}^{n}\E (D_{i}f)^{2} \int_{0}^{1}e^{-tk/2}(\vnorm{D_{i}f}_{1}/\vnorm{D_{i}f}_{2})^{1-e^{-t/2}}dt.
%&\qquad=\sum_{i=1}^{n}\vnorm{\partial_{i}f}_{L_{2}(\gamma_{n})}^{2}\int_{0}^{1}2u^{1+k}(\vnorm{\partial_{i}f}_{L_{1}(\gamma_{n})}/\vnorm{\partial_{i}f}_{L_{2}(\gamma_{n})})^{1-u}du.
\end{aligned}
\end{equation}

%f(t)=exp((t/3)\log a), integrate get 3(\log a)^{-1}exp((t/3)\log a)
Let $0<a<1$.  Note that
\begin{equation}\label{five4}
\int_{0}^{1/k}e^{-t(1+k/2)}a^{1-e^{-t/2}}dt
\leq\int_{0}^{1/k}a^{t/3}dt
=3(\log (1/a))^{-1}[1-a^{1/3k}].
\end{equation}
For any $s\in\N$
\begin{equation}\label{five5}
\int_{s/k}^{(s+1)/k}e^{-t(1+k/2)}a^{1-e^{-t/2}}dt
\leq e^{-s}\int_{s/k}^{(s+1)/k}a^{1-e^{-t/2}}dt
\leq e^{-s}\int_{0}^{1/k}a^{t/3}dt.
\end{equation}
So, combining \eqref{five3}, \eqref{five4} and \eqref{five5},
\begin{equation}\label{five6}
\begin{aligned}
\E f^{2}-(\E f)^{2}
&\stackrel{\eqref{five3}\wedge\eqref{five5}}{\leq}2\sum_{i=1}^{n}\E(D_{i}f)^{2}
\sum_{s=0}^{\infty}e^{-s}\int_{0}^{1/k}(\vnorm{D_{i}f}_{1}/\vnorm{D_{i}f}_{2})^{t/3}dt\\
&\stackrel{\eqref{five4}}{\leq}6\sum_{i=1}^{n}\vnorm{D_{i}f}_{2}^{2}
\frac{[1-(\vnorm{\partial_{i}f}_{L_{1}(\gamma_{n})}/\vnorm{D_{i}f}_{2})^{1/3k}]}
{(\log (\vnorm{\partial_{i}f}_{L_{2}(\gamma_{n})}/\vnorm{D_{i}f}_{2}))}.
\end{aligned}
\end{equation}

\end{proof}
}
%*** HERE the \ignore-d part ends and a replacement follows
Hence $\| P_{1/k}f\|_{2} \leq e^{-1}\| f\|_{2}$ and thus
\begin{equation} \label{intbound}
(1-e^{-2})\E f^{2} \leq \E f^{2} - \E(P_{1/k}f)^{2}=-\int_{0}^{1/k}\frac{d}{dt}\E(P_{t})^{2}\,dt=2\int_{0}^{1/k}\E P_{t}fLP_{t}f\,dt
\end{equation}
$$
=2\int_{0}^{1/k} \sum_{i=1}^{n} \E(D_{i}P_{t}f)^{2}\,dt=2\sum_{i=1}^{n}\int_{0}^{1/k} \E(P_{t}D_{i}f)^{2}\,dt \leq
2\sum_{i=1}^{n} \int_{0}^{1/k} \| D_{i}f\|_{1+e^{-2t}}^{2}\,dt,
$$
where the last inequality is the usual hypercontractive bound \cite{bonami70,nelson73,gross75}.

By H\"older's inequality, for $0<q<p<2$ we have
\begin{equation}\label{holdon}
\E |g|^{p}=\E |g|^{\frac{(2-p)q}{2-q}}|g|^{\frac{2(p-q)}{2-q}}
\leq \left(\E|g|^{q}\right)^{\frac{2-p}{2-q}}\left(\E g^{2}\right)^{\frac{p-q}{2-q}}.
\end{equation}
Applying this estimate to $g=D_{i}f$,
$q=1+e^{-2/k}$, and $p=1+e^{-2t}$ with $t \in (0,1/k)$,
$$
\| D_{i}f\|_{1+e^{-2t}}^{2}
\leq\| D_{i}f\|_{2}^{2} \left( \| D_{i}f\|_{1+e^{-2/k}}/\| D_{i}f\|_{2}\right)^{\frac{2\tanh t}{\tanh (1/k)}}
\leq \| D_{i}f\|_{2}^{2} \left( \| D_{i}f\|_{1+e^{-2/k}}/\| D_{i}f\|_{2}  \right)^{2tk}
$$
since $t \mapsto \frac{\tanh t}{t}$ is decreasing on $(0,\infty)$. Therefore
\begin{flalign*}
\int_{0}^{1/k} \| D_{i}f\|_{1+e^{-2t}}^{2}\,dt
&\leq \| D_{i}f\|_{2}^{2} \int_{0}^{1/k} \left( \| D_{i}f\|_{1+e^{-2/k}}/\| D_{i}f\|_{2}  \right)^{2tk}\,dt\\
&=\| D_{i}f\|_{2}^{2} \frac{1- \left( \| D_{i}f\|_{1+e^{-2/k}}/\| D_{i}f\|_{2}  \right)^{2}}{2k \log  \left( \| D_{i}f\|_{2}/\| D_{i}f\|_{1+e^{-2/k}}  \right)} \\
&\leq\frac{1}{k}\| D_{i}f\|_{2}^{2}\min\left(1, \frac{1}{2\log \left( \| D_{i}f\|_{2}/\| D_{i}f\|_{1+e^{-2/k}}  \right)}\right),
\end{flalign*}
where we have used the fact that $1-a^{-2} \leq 2\log a$ for $a \geq 1$.
Together with \eqref{intbound} this ends the proof of the first inequality of Theorem \ref{thm4}.

Applying \eqref{holdon} to $g=D_{i}f$, $q=1$, and $p=1+e^{-2/k}$, we get  $\| D_{i}f \|_{2}/\| D_{i}f\|_{1+e^{-2/k}} $\\$\geq
\left ( \| D_{i}f\|_{2}/\| D_{i}f\|_{1}\right)^{\tanh(1/k)}$. Since $k\tanh(1/k) \geq \tanh(1)$, the second inequality of Theorem \ref{thm4}
easily follows.
\end{proof}

\section{The Coding Tribes Function}\label{sechatami}

%***Fixed a few English errors below, also changed 'balanced' to 'mean zero,' to be consistent with the rest of the article

Recall that in Proposition~\ref{prop1} Ben-Or and Linial constructed a Boolean function with mean zero and all of whose influences are $O(\log n/n)$. The results of KKL in Theorem~\ref{kkl} imply that it is impossible for the maximum influence of a mean zero Boolean function to be of lower order.  In Question~\ref{q1} Hatami and Kalai asked if the KKL result can be strengthened if the function $f$ satisfies additionally that $\E[f W_S] = 0$ for all $S$ with $|S| < k$ where $k(n) \to \infty$ as
$n \to \infty$.

The KKL result in fact
%***CHANGED "imply" to
implies
that mean zero Boolean functions which are invariant under permutation of the inputs have an influence sum which is
$\Omega(\log n)$.
We first note that, by taking the Ben-Or and Linial tribes function $f$ and letting $g(x_1,\ldots,x_n,y_1,\ldots,y_k) = f(x) y_1 \ldots y_k$,  we obtain a function all of whose Fourier coefficients up to level
$k$ vanish and such that its sum of influences is $O(\log n + k)$.
Thus one cannot improve on the KKL sum of influence result unless $k/\log n \to \infty$ as $n\to\infty$. In this section we will construct an example of a function all of whose Fourier coefficients up to level $\Omega(\log n)$ vanish and all of whose individual influences are at most $O(\log n/n)$ thus proving Theorem~\ref{thm30} and answering in the negative Question~\ref{q1}.
%*** Change 'coefficients' to 'fourier coefficients' above
%*** added 'as n\to\infty'

We denote by $L^{>k}(\{-1,1\}^{n})$ the space of all functions $f\colon\{-1,1\}^{n}\to\R$ such that $\E fW_{S}=0$ for all $S\subset\{1,\ldots,n\}$ with $\abs{S}\leq k$.  We denote by $L_+^{>k}(\{-1,1\}^{n})$ the space of all functions $f\colon\{-1,1\}^{n}\to\{-1,1\}$ such that $\E fW_{S}=0$ for all $S\subset\{1,\ldots,n\}$ with $1\leq\abs{S}\leq k$.  The difference between the two families is that the latter functions are allowed to have non-zero expectation.

%*** changed iff to if and only if
We will use the convention that $1$ and $-1$ map to the logical values TRUE and FALSE, respectively.
Thus for $x_{1},\ldots,x_{n}\in\{-1,1\}$, we have $x_1 \vee \cdots \vee x_n  = -1$ if and only if $x_1 = \cdots = x_n = -1$, and
$x_1 \wedge \cdots \wedge x_n  = 1$ if and only if $x_1= \cdots = x_n = 1$.

Our strategy is to construct a function in $L_+^{>k}(\{-1,1\}^{n})$ with low influences and small mean and then ``correct'' it so that it has mean zero

The basic idea behind the construction is the following: we want to mimic the construction of the tribes function. Recall that the tribe function is given by
\[
(x_1 \wedge \ldots \wedge x_r) \vee \ldots \vee (x_{(b-1)r+1} \wedge \ldots \wedge x_{br})
\]
In our construction, which we call the {\em Coding Tribes } function instead of substituting AND functions into the arguments of an OR function, we will substitute functions in $L_+^{>k}$ into the arguments of an OR function.

For example for $k=1$, instead of the AND function on $r$ bits we will take the function ALLEQ on $r+1$ bits, where
$\mathrm{ALLEQ}(x_1,\ldots,x_{r+1})$ takes the value $1$ exactly
%***ADDED "if"
if
the $x_i$ are all $1$ or all $-1$. Clearly the function $\mathrm{ALLEQ}$ is in $L_+^{>1}$ since it is not correlated with a single bit.
To analyze
%***CHANGED "these" to
this
tribe-like construction we need the following.

\begin{proposition} \label{prop:general_tribes}
Let $g\colon\{-1,1\}^{r}\to\{-1,1\}$.  Consider a function $f\colon\{-1,1\}^{n}\to\{-1,1\}$ of the form
$$
f(x) =f_{b,r}(x)\colonequals g(x_1,\ldots,x_r) \vee g(x_{r+1},\ldots,x_{2r}) \vee \cdots \vee g(x_{(b-1)r+1},\ldots,x_{br}), \,\, br = n,
$$
and where $\P(g = 1) \leq 2^{-m}$ where $m \leq r$. Then
\begin{equation}\label{eight1}
\E f= 2(1 - (1-\P(g=1))^b)-1,
\end{equation}
\begin{equation}\label{eight2}
\max_{i=1,\ldots,n} I_i(f) \leq 2 \times 2^{-m}.
\end{equation}
One can choose $b$ so that
\begin{equation}\label{eight3}
|\E f | \leq 2^{-m+1}.
\end{equation}
\end{proposition}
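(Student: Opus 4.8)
The plan is to analyze the tribe-like composition $f = f_{b,r}$ directly via independence of the $b$ blocks. First I would compute $\E f$: each block $g(x_{(j-1)r+1},\dots,x_{jr})$ independently takes the value $1$ with probability $\P(g=1)$ and $-1$ otherwise, and $f$ equals $-1$ precisely when \emph{every} block evaluates to $-1$, i.e.\ when none of the OR-clauses fire. Hence $\P(f=-1) = (1-\P(g=1))^{b}$ and $\P(f=1) = 1-(1-\P(g=1))^{b}$, which gives $\E f = \P(f=1)-\P(f=-1) = 2\bigl(1-(1-\P(g=1))^{b}\bigr)-1$, establishing \eqref{eight1}.

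Next I would bound the influences. Fix a coordinate $i$; it lies in exactly one block, say the $j$-th. Flipping $x_i$ can change the value of $f$ only if it changes the value $g_j$ of that block, \emph{and} the remaining $b-1$ blocks are all $-1$ (otherwise $f=1$ regardless of block $j$). So $I_i(f) \le \P(\text{block }j\text{ changes under flip of }x_i)\cdot \P(\text{all other blocks }=-1)$. The second factor is at most $(1-\P(g=1))^{b-1}\le 1$; actually I only need it $\le 1$, but more carefully the event that flipping $x_i$ changes $g_j$ forces the post-flip value of $g_j$ to be $\pm 1$, and since $\P(g=1)\le 2^{-m}$ the probability that $g_j$ is ever equal to $1$ (before or after the flip) is at most $2\cdot 2^{-m}$; combined with the fact that a change requires $g_j$ to equal $1$ in one of the two configurations, we get $I_i(f)\le 2\cdot 2^{-m}$, which is \eqref{eight2}. (The bound is uniform in $i$, so it bounds the max.)

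Finally, for \eqref{eight3} I would choose $b$ to make $|\E f|$ small. From \eqref{eight1}, $\E f$ is a continuous decreasing function of $b$ interpolating (as $b$ ranges over integers) between values near $1$ (for $b=1$, when $\P(g=1)$ is tiny) and $-1$ (as $b\to\infty$). Writing $p=\P(g=1)\le 2^{-m}$ and $u(b)=(1-p)^{b}$, we have $\E f = 1-2u(b)$, so $|\E f|\le 2^{-m+1}$ exactly when $u(b)\in[\tfrac12 - 2^{-m}, \tfrac12 + 2^{-m}]$. Since consecutive ratios $u(b+1)/u(b) = 1-p \ge 1 - 2^{-m}$, the sequence $u(b)$ never jumps by a multiplicative factor worse than $1-2^{-m}$, so it cannot step over the interval $[\tfrac12(1-2^{-m+1}), \tfrac12(1+2^{-m+1})]$ without landing in it; hence some integer $b$ works. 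The main obstacle is this last step: one must verify that the grid $\{u(b):b\in\N\}$ is fine enough near $1/2$ relative to the target window of width $\sim 2^{-m}$, which is exactly where the hypothesis $p \le 2^{-m}$ (equivalently $1-p \ge 1-2^{-m}$) is used; everything else is a direct independence computation.
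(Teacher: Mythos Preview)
Your argument is correct and matches the paper's approach essentially line for line: independence of the blocks for \eqref{eight1}, the union bound ``$g$ equals $1$ before or after the flip'' for \eqref{eight2}, and for \eqref{eight3} the observation that consecutive values of $\E f_{b,r}$ differ by at most $2p\le 2^{-m+1}$, so the sequence cannot skip over $0$ without landing within $2^{-m+1}$ of it. One cosmetic slip: you wrote that $\E f$ is \emph{decreasing} in $b$ and goes from near $1$ to $-1$; in fact $\E f = 1-2u(b)$ is \emph{increasing} from $2p-1\approx -1$ at $b=1$ to $1$ as $b\to\infty$, but your subsequent computation with $u(b)$ is stated correctly and unaffected.
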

\begin{proof}
Equation \eqref{eight1} is obvious, and \eqref{eight3} follows from the fact that
\[
0\leq\E[f_{b,r}-f_{b+1,r}] \leq2^{-m+1}.
\]
Equation \eqref{eight2} is also easy: for $x_i$ to be pivotal where $i\in\{dr+1,dr+2,\ldots(d+1)r\}$, we need that the $g$ value of the other $x_j$ in the block with $j\in\{dr+1,dr+2,\ldots,(d+1)r\}$, together with either $x_i = -1$ or $x_i = 1$ evaluate to $1$.
%To prove \eqref{eight4}, let $y_{1},\ldots,y_{b}\in\{-1,1\}$ and define $\mathrm{OR}(y_{1},\ldots,y_{b})\colonequals y_{1}\vee\cdots\vee y_{b}$.  Then write
%\begin{equation}\label{eight5}
%\mathrm{OR}(y)=\sum_{S\subset\{1,\ldots,b\}}\widehat{\mathrm{OR}}(S)\prod_{i\in S}y_{i}.
%\end{equation}
%Now, note that
%$f(x)=\mathrm{OR}(g(x_{1},\ldots,x_{r}),\ldots,g(x_{(b-1)r+1},\ldots,x_{br}))$, so write
%\begin{equation}\label{eight6}
%g(x_{1},\ldots,x_{r})=\sum_{S\subset\{1,\ldots,r\}}\widehat{g}(S)\prod_{i\in S}x_{i}.
%\end{equation}
% and substituting \eqref{eight6} into \eqref{eight5} proves \eqref{eight4}.
\end{proof}

We will also need the following fact
\begin{proposition} \label{prop:general_tribes2}
Consider a function of the form:
\[
f(x) = F(g_1(x_1,\ldots,x_{r}),g_2(x_{r+1},\ldots,x_{2r}) \ldots g_{b}(x_{(b-1)r+1},\ldots,x_{br})),\quad br=n,
\]
where  $\{g_j\}_{j=1}^{b}$ are Boolean functions all taking the values $\{0,1\}$ or all taking the values $\{-1,1\}$. Assume further that $g_j \in L_+^{>k}(\{-1,1\}^{r})$ for all $j=1,\ldots,b$. Then $f \in L_+^{>k}(\{-1,1\}^{n})$.
\end{proposition}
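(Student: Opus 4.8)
The plan is to argue directly from the definition of the Fourier expansion by exploiting the product structure of the composed function. Write $B_j = \{(j-1)r+1,\ldots,jr\}$ for the $j$-th block of coordinates, so that $\{1,\ldots,n\}$ is the disjoint union of $B_1,\ldots,B_b$, and $g_j$ depends only on the coordinates in $B_j$. First I would reduce to the case where all $g_j$ take values in $\{-1,1\}$: if instead they all take values in $\{0,1\}$, replace each $g_j$ by $2g_j-1 \in \{-1,1\}$ and replace $F$ by the function $\widetilde F(y_1,\ldots,y_b) = F((y_1+1)/2,\ldots,(y_b+1)/2)$; this does not change $f$, and $g_j \in L_+^{>k}$ means exactly that $\widehat{g_j}(S)=0$ for all $S$ with $1 \le |S| \le k$, a condition unchanged by the affine map $g_j \mapsto 2g_j-1$ up to the degree-zero coefficient (which we are allowed to move). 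So assume $g_j \in \{-1,1\}$ for all $j$.

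The key step is to expand $F$ in its own multilinear (Fourier) expansion over $\{-1,1\}^b$: $F(y_1,\ldots,y_b) = \sum_{T \subseteq \{1,\ldots,b\}} \widehat F(T) \prod_{j \in T} y_j$. Substituting $y_j = g_j$ gives
\[
f(x) = \sum_{T \subseteq \{1,\ldots,b\}} \widehat F(T) \prod_{j \in T} g_j(x_{B_j}).
\]
Since the blocks $B_j$ are disjoint and $g_j$ depends only on $x_{B_j}$, the Fourier support of the product $\prod_{j \in T} g_j$ is contained in $\sum_{j \in T} \mathrm{supp}(\widehat{g_j})$, i.e. every $S$ with $\widehat{\prod_{j\in T} g_j}(S) \neq 0$ splits as $S = \bigcup_{j \in T} S_j$ with $S_j \subseteq B_j$ and $\widehat{g_j}(S_j) \neq 0$; moreover because $g_j^2 = 1$ there are no cancellations between distinct blocks, and $\widehat{\prod_{j\in T} g_j}(S) = \prod_{j \in T}\widehat{g_j}(S_j)$. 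Now I would verify the two required vanishings. For $|S|=0$: a nonzero contribution would force $T=\emptyset$ (since $g_j$ has mean zero—it has no degree-zero coefficient as $g_j \in L_+^{>k}$ with $k \ge 1$, wait, actually $g_j$ may have nonzero mean), so I must be slightly more careful here—but in fact $f$ is only required to satisfy $\E f W_S = 0$ for $1 \le |S| \le k$, so the degree-zero coefficient of $f$ is unconstrained and I need not track it. For $1 \le |S| \le k$: pick the nonempty set $S$ and suppose $\widehat f(S) \neq 0$. Then some term with $\widehat F(T) \neq 0$ contributes, so $S = \bigcup_{j\in T} S_j$ with each $\widehat{g_j}(S_j) \neq 0$. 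Since $S \neq \emptyset$, at least one $S_{j_0}$ is nonempty, and $|S_{j_0}| \le |S| \le k$; but $g_{j_0} \in L_+^{>k}(\{-1,1\}^r)$ forces $\widehat{g_{j_0}}(S_{j_0}) = 0$ for $1 \le |S_{j_0}| \le k$, a contradiction. Hence $\widehat f(S) = 0$ for all $1 \le |S| \le k$, i.e. $f \in L_+^{>k}(\{-1,1\}^n)$.

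The only real subtlety—and the main thing to get right—is the bookkeeping on Fourier supports of products over disjoint coordinate blocks: one must state cleanly that for functions on disjoint variable sets the Fourier coefficient of a product is the product of the Fourier coefficients (of the induced partition of $S$), and in particular that the support of the product is the "sumset" of the supports, with no accidental cancellation. This is a standard fact but should be invoked explicitly. Everything else is routine expansion. I would also remark that the argument shows a bit more: the minimal Fourier level of $f$ above degree zero is at least $\min_j (\text{minimal positive Fourier level of } g_j)$, which is what lets the tribes-like construction stack the vanishing-degree guarantees of the inner functions.
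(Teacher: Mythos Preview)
Your proof is correct and follows essentially the same approach as the paper: expand $F$ as a multilinear polynomial in the $g_j$ and then observe that each product $\prod_{j\in T} g_j$ lies in $L_+^{>k}$ because its Fourier coefficient at any $S$ with $1\le|S|\le k$ factors over the blocks and picks up a vanishing factor $\widehat{g_{j_0}}(S\cap B_{j_0})$. The paper phrases the last step as an induction reducing to the product of two functions, while you compute the product formula directly; your reduction from the $\{0,1\}$ case and the momentary aside about the mean are harmless extras.
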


\begin{proof}
Since we can write $F$ as a multilinear polynomials of its binary inputs, it suffices to show that each product of a subset of the $g_i$ is in $L_+^{>k}$. By induction it suffices to show this for two functions which is immediate.
\end{proof}

We are particularly interested in the case where $g$ is an indicator of a linear code.
Recall that a {\em linear code} is a linear subspace of $\{0,1\}^n$, where we treat $\{0,1\}$ as the field of two elements.
The {\em minimal weight} $w(C)$ of a code $C$ is defined by
\[
w(C) = \min \{\vnorm{x}_{1} : 0 \neq x \in C\},
\]
where $\vnorm{(x_{1},\ldots,x_{n})}_{1}\colonequals\sum_{i=1}^{n}\abs{x_{i}}$ is the Hamming weight of $x$.
The {\em dual} code of $C$ denoted $C^\perp \subset\{0,1\}^n$ is given by
\[
C^{\perp} \colonequals \big\{ y \in \{0,1\}^n : \sum_{i=1}^{n} x_i y_i = 0 \mod 2, \; \forall\,x \in C \big\}.
\]
Given a code $C \subset \{0,1\}^n$, we will write $g_C\colon\{-1,1\}^{n}\to\{-1,1\}$ for the following Boolean function
% {-1,1}^{n}\to{0,1}^{n}, homomorphism
% (x_{1},\ldots,x_{n})\to ((1-x_{1})/2,...,(1-x_{n})/2)
% ab \to ((1-ab)/2)=?(1-a)/2 +(1-b)/2?  a=b=1, yes.  a=-1, b=-1, yes.  a=-1, b=1, yes
\[
g_C(x_1,\ldots,x_n)
\colonequals
\begin{cases}
1, & \mbox{ if } ((1-x_{1})/2,\ldots,(1-x_{n})/2) \in C \\
-1, & \mbox{ if } ((1-x_{1})/2,\ldots,(1-x_{n})/2) \notin C.
\end{cases}
\]
%\[
%g_C(x_1,\ldots,x_n)
%\colonequals
%\begin{cases}
%1, & \mbox{ if } ((-1)^{x_1},\ldots,(-1)^{x_n}) \in C \\
%-1, & \mbox{ if } ((-1)^{x_1},\ldots,(-1)^{x_n}) \notin C.
%\end{cases}
%\]
%https://www.cs.cmu.edu/~venkatg/teaching/codingtheory/notes/notes1.pdf
%https://www.cs.cmu.edu/~venkatg/teaching/codingtheory/notes/notes2.pdf
%
By the MacWilliams identities~\cite{MacWilliams:63}, see e.g.~\cite[Lemma 3.3]{khot06}  we have:
% MacWilliams, Jessie. "A Theorem on the Distribution of Weights in a Systematic Codes." Bell System Technical Journal 42.1 (1963): 79-94.
%{\bf Comment: Steve can't access the MacWilliams paper, unfortunately}
\begin{proposition} \label{prop:MacWilliams}
Let $C$ be a linear code. Then
$g_C \in L_+^{>k}$ if and only if  $w(C^{\perp}) > k$.
\end{proposition}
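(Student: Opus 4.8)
The plan is to compute all Fourier coefficients of $g_{C}$ explicitly and simply read off the equivalence; the only ingredient beyond bookkeeping is the orthogonality of additive characters over a subgroup, which is precisely the content of the cited \cite[Lemma 3.3]{khot06}.

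First I would translate everything to the group $(\{0,1\}^{n},+)$ via the bijection $z_{i}\colonequals (1-x_{i})/2$. Under this change of variables $g_{C}(x)=2\cdot\1_{\{z\in C\}}-1$, and for $S\subset\{1,\ldots,n\}$ we have $W_{S}(x)=\prod_{i\in S}x_{i}=(-1)^{\sum_{i\in S}z_{i}}$, i.e. $W_{S}$ becomes the additive character $\chi_{S}(z)\colonequals(-1)^{\sum_{i\in S}z_{i}}$ indexed by the indicator vector of $S$. Two elementary observations make this useful: (a) $\chi_{S}$ restricted to the subgroup $C$ is the trivial character if and only if $\sum_{i\in S}z_{i}\equiv 0\pmod 2$ for all $z\in C$, i.e. if and only if $S\in C^{\perp}$; and (b) $\abs{S}$ is exactly the Hamming weight of the indicator vector of $S$.

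Next I would compute, for every $S$,
$$
\widehat{g_{C}}(S)=\E g_{C}W_{S}=2^{-n}\sum_{z\in\{0,1\}^{n}}\bigl(2\cdot\1_{\{z\in C\}}-1\bigr)\chi_{S}(z)=2^{1-n}\sum_{z\in C}\chi_{S}(z)-\1_{\{S=\emptyset\}},
$$
using that $\sum_{z\in\{0,1\}^{n}}\chi_{S}(z)$ equals $2^{n}$ if $S=\emptyset$ and $0$ otherwise. By the character orthogonality relation on the subgroup $C$, $\sum_{z\in C}\chi_{S}(z)$ equals $\abs{C}$ if $\chi_{S}|_{C}$ is trivial and $0$ otherwise, so by observation (a), for every nonempty $S$,
$$
\widehat{g_{C}}(S)=2^{1-n}\abs{C}\cdot\1_{\{S\in C^{\perp}\}}
$$
(and $\widehat{g_{C}}(\emptyset)=2^{1-n}\abs{C}-1$, which plays no role since the definition of $L_{+}^{>k}$ places no restriction on the mean).

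Finally, by definition $g_{C}\in L_{+}^{>k}$ if and only if $\widehat{g_{C}}(S)=0$ for all $S$ with $1\leq\abs{S}\leq k$. Since $\abs{C}>0$, the displayed formula shows this holds if and only if $C^{\perp}$ contains no $S$ with $1\leq\abs{S}\leq k$, i.e. no nonzero vector of Hamming weight at most $k$, which by observation (b) and the definition of minimal weight is exactly the statement $w(C^{\perp})>k$. There is no real obstacle here: the argument is a one-line character computation, and the only thing to watch is that the coordinate change $z_{i}=(1-x_{i})/2$ turns $W_{S}$ into an honest character of $(\{0,1\}^{n},+)$, so that the orthogonality relation applies with $C$ a genuine subgroup (which it is, being a linear code).
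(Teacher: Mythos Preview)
Your proof is correct and is precisely the standard character computation underlying the cited MacWilliams-type identity; the paper does not give its own proof but simply defers to \cite{MacWilliams:63} and \cite[Lemma 3.3]{khot06}, whose content is exactly the Fourier-support calculation you have written out.
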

For example, for $C = \{(0,\ldots,0),(1 \ldots 1)\}$, we have
$g_C(x) = 1$ if and only if $x = \pm (1,\ldots,1)$, and the code $C^{\perp}$ consists of all codewords $x$ with $\vnorm{x}_{1}$ even, so $C^{\perp}$ has minimal weight $w(C^{\perp})=2$.

%*** Changed \gamma<1 to \gamma>1
%*** Changed 2^{-\gamma m} to 2^{-3m}
%*** Changed L_{+}^{m} to L_{+}^{>m}
\begin{proposition} \label{prop:tribek}
There exists a constant $\gamma > 1$ such that for every $m>0$, there
exists a function $g : \{-1,1\}^{\lceil \gamma m \rceil} \to \{-1,1\}$ with $g \in L_{+}^{>m}$
and $2^{-3 m} \leq \P[g = 1] \leq 2^{-m}$.
\end{proposition}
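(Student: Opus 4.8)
The plan is to take $g$ to be the code-indicator $g_{C}$ of a suitable binary linear code $C\subset\{0,1\}^{n}$ of length $n=\lceil\gamma m\rceil$, where $\gamma$ is an absolute constant to be chosen, and to use Proposition~\ref{prop:MacWilliams} to convert the requirement $g_{C}\in L_{+}^{>m}$ into a minimum-distance condition on the dual code. I treat $m$ as a positive integer, which is the only case needed in the sequel. The first step is bookkeeping: since $x\mapsto((1-x_{1})/2,\ldots,(1-x_{n})/2)$ pushes the uniform measure on $\{-1,1\}^{n}$ forward to the uniform measure on $\{0,1\}^{n}$, the definition of $g_{C}$ gives $\P[g_{C}=1]=\abs{C}/2^{n}=2^{\dim C-n}$ (as in the example following Proposition~\ref{prop:MacWilliams}), while Proposition~\ref{prop:MacWilliams} says $g_{C}\in L_{+}^{>m}$ if and only if $w(C^{\perp})>m$.

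Writing $D\colonequals C^{\perp}$, so that $C=D^{\perp}$ and $\dim C=n-\dim D$, we have $\P[g_{C}=1]=2^{-\dim D}$. Thus the proposition reduces to the following purely coding-theoretic statement: for a suitable absolute constant $\gamma$ and every positive integer $m$, there is a binary linear code $D$ of length $n=\lceil\gamma m\rceil$ with $w(D)>m$ and $m\leq\dim D\leq3m$ --- because then $C\colonequals D^{\perp}$ and $g\colonequals g_{C}$ satisfy $g\in L_{+}^{>m}$ and $2^{-3m}\leq\P[g=1]=2^{-\dim D}\leq2^{-m}$. I would produce such a $D$ with $\dim D=2m$ (comfortably inside $[m,3m]$) and minimum distance at least $m+1$, via the Gilbert--Varshamov bound. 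Concretely: let $A$ be a uniformly random $(n-2m)\times n$ matrix over $\mathbb{F}_{2}$; for each fixed nonzero $v\in\{0,1\}^{n}$ one has $\P[Av=0]=2^{-(n-2m)}$, so the expected number of nonzero $v$ of Hamming weight at most $m$ lying in $\ker A$ equals $2^{-(n-2m)}\sum_{j=1}^{m}\binom{n}{j}$. If this is $<1$, then some $A$ has $\ker A$ containing no nonzero vector of weight $\leq m$; since $\dim\ker A\geq n-(n-2m)=2m$, any $2m$-dimensional subspace $D\subset\ker A$ is an $[n,2m]$ code with $w(D)\geq m+1$, as desired.

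It remains to choose $\gamma$ so that $\sum_{j=1}^{m}\binom{n}{j}<2^{\,n-2m}$ holds for every positive integer $m$ when $n=\lceil\gamma m\rceil$. Since $m/n=m/\lceil\gamma m\rceil\leq1/\gamma\leq1/2$ for $\gamma\geq2$, the standard entropy estimate gives $\sum_{j=0}^{m}\binom{n}{j}\leq2^{\,nH(m/n)}$, where $H(x)\colonequals-x\log_{2}x-(1-x)\log_{2}(1-x)$; so it suffices to have $H(m/n)<1-2m/n$, and since $H$ is increasing on $[0,1/2]$ with $m/n\leq1/\gamma$, while $1-2m/n\geq1-2/\gamma$, it suffices in turn to have $H(1/\gamma)<1-2/\gamma$. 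As $\gamma\to\infty$ the left-hand side tends to $0$ and the right-hand side to $1$, so I would fix an absolute constant $\gamma>1$ (for instance $\gamma=10$) for which this holds; for that $\gamma$ the construction of the previous paragraph produces, for every $m$, a code $D$ as required, and taking $g=g_{D^{\perp}}$ completes the proof.

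I expect the main obstacle to be purely quantitative: verifying that the Gilbert--Varshamov counting inequality can be met by a single constant $\gamma$ independent of $m$ --- that is, solving $H(1/\gamma)<1-2/\gamma$ --- while simultaneously keeping $\dim D$ inside the target window $[m,3m]$. Both are routine. The one genuinely conceptual step, namely trading ``$g_{C}$ has no Fourier mass at levels $1,\ldots,m$'' for ``the dual code $C^{\perp}$ has minimum distance exceeding $m$'', has already been handled for us in Proposition~\ref{prop:MacWilliams}, so there is no real difficulty there.
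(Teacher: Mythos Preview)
Your proof is correct and follows the same route as the paper: set $g=g_{C}$ with $C=D^{\perp}$, use Proposition~\ref{prop:MacWilliams} to turn $g\in L_{+}^{>m}$ into $w(D)>m$, and note $\P[g=1]=2^{-\dim D}$; the only difference is that the paper invokes the existence of asymptotically good linear codes as a black box, whereas you spell out a Gilbert--Varshamov random-parity-check argument to build $D$ with $\dim D=2m$ and $w(D)>m$ directly.
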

\begin{proof}
% H(\gamma)=-\gamma\log\gamma-(1-\gamma)\log(1-\gamma)
% for gamma small, H(\gamma)\approx\gamma
% known that 1-H(\gamma/2)\geq dim(C)\geq 1-H(\gamma)-o(1)
%
%*** CHANGED {-1,1}^{m'} to {0,1}^{m'}
The function $g$ will be constructed via the dual of a``good code.'' It is well known that good codes exist \cite{macwilliams77}.
Such (linear) codes $C \subset \{0,1\}^{m'}$ have the following properties (where $\delta$ is independent of $m'$).
\begin{itemize}
\item $(3/4)m'\geq\dim(C) \geq m'/4$,
\item $w(C) \geq \delta m'$, where $\delta > 0$.
\end{itemize}
We let $g \colonequals g_{C^{\perp}}$.  Then $\P[g = 1] = 2^{- \dim(C^{\perp})}$, so
\[
2^{-3m'/4}\leq\P[g = 1] \leq 2^{-m'/4},
\]
and by Proposition~\ref{prop:MacWilliams}, $g \in L_+^{> k}$ where
%*** Changed location of k from the left side to the right side of the following inequality
\[
w(C^{\perp \perp}) = w(C) \geq \delta m'=k.
\]
Setting $\gamma = \max(4,\delta^{-1})$, the proof follows.
%m'=4m=am
% bm=\delta m', b=\delta m'/m=\delta 4
%
% m=m'/4.
%
%alternately, choosing m=3m'/4
\end{proof}

%*** ADDED the following proof
Propositions~\ref{prop:general_tribes} and \ref{prop:tribek} are already enough to prove that Harper's inequality cannot be improved for tail spaces.
\begin{proof}[Proof of Theorem \ref{thm31}]
Let $b=1$ in Proposition~\ref{prop:general_tribes} and use $g$ from Proposition \ref{prop:tribek}.  Setting $n\colonequals\lceil\gamma m\rceil$ we get $g\colon\{-1,1\}^{n}\to\{-1,1\}$ with $g\in L_{+}^{>m}$, $\E g=2\P(g=1)-1$, $\max_{i=1,\ldots,n}I_{i}g\leq2\P(g=1)$.  Then, the function $f\colonequals (1+h)/2=1_{(h=1)}$ satisfies $f\colon\{-1,1\}^{n}\to\{0,1\}$, $\sum_{i=1}^{n}I_{i}f\leq 2n\P(g=1)\leq \gamma m\P(g=1)$, and $\E f=1/2+\E h/2=\P(g=1)$.  From Proposition \ref{prop:tribek}, $\P[g = 1] \leq 2^{-m}$.  That is,
$$\frac{\sum_{i=1}^{n}I_{i}f}{(\E f)\log(1/\E f)}\leq \frac{\gamma m}{\log(1/\E f)}\leq \gamma \frac{m}{m}=\gamma.$$
%so, the left side of Harper is Cm\P(g=1), and the right side of harper is P(g=1)log(1/P(g=1))
% 2^{-3m}\leq Ef\leq 2^{-m}, so 2^{m}\leq 1/Ef\leq 2^{3m}, so m\leq \log(1/E f)\leq 3m, so [3m]^{-1}\leq[\log(1/Ef)]^{-1}\leq m^{-1}
\end{proof}

%*** ADDED reference to prop:general tribes 2
Substituting $g$ from Proposition~\ref{prop:tribek} into Propositions~\ref{prop:general_tribes} and \ref{prop:general_tribes2}, and letting $n = m b$, where $b$ is chosen so that $\E[f]$ is as close to $0$ as possible
(so that $m = O(\log n)$), we obtain:
\begin{theorem}\label{thm10}
There exist a family of Boolean functions $f = f_n : \{-1,1\}^n \to \{-1,1\}$ such that
\begin{itemize}
\item
 $f \in L_+^{> \Omega(\log n)}(\{-1,1\}^{n})$.
\item
For all $i\in\{1,\ldots,n\}$, $I_i(f) \leq O((\log n) / n)$.
\item
$|\E f| \leq O((\log n) / n)$.
\end{itemize}
\end{theorem}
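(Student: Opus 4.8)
The plan is to read Theorem~\ref{thm10} off of Propositions~\ref{prop:general_tribes}, \ref{prop:general_tribes2} and \ref{prop:tribek}, the only substantive work being to pin down how the code-length parameter $m$ is tied to the eventual number of variables $n$. First I would fix a large positive integer $m$ and apply Proposition~\ref{prop:tribek} to get $g\colon\{-1,1\}^{r}\to\{-1,1\}$ with $r=\lceil\gamma m\rceil$, $g\in L_{+}^{>m}$, and $p:=\P[g=1]\in[2^{-3m},2^{-m}]$. I would then form the Coding Tribes function $f=f_{b,r}$ of Proposition~\ref{prop:general_tribes}, namely the OR of $b$ independent blocks each feeding into $g$, with $n=br$, and choose $b=b(m)$ to be the first integer with $\E f_{b,r}\ge 0$. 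Since $\E f_{b,r}=1-2(1-p)^{b}$ increases monotonically from $-1$ (at $b=0$) toward $1$, such a $b$ exists, and since $p\le 2^{-m}\le 1/2$ one checks $\tfrac{\ln 2}{2p}\le b\le\tfrac{\ln 2}{p}+1$, so $b\asymp 1/p$ with absolute constants; in particular $b\ge 2^{m}/3$ is large, so no degeneracy occurs.

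Next, Proposition~\ref{prop:general_tribes2}, applied with $F$ the $b$-bit OR and every inner function equal to $g\in L_{+}^{>m}$, gives $f\in L_{+}^{>m}(\{-1,1\}^{n})$. Proposition~\ref{prop:general_tribes} gives $\max_{i}I_{i}(f)\le 2p$ from \eqref{eight2}, and, for our choice of $b$, $|\E f|\le 2p$ from \eqref{eight3}: the chosen $b$ sits at a sign change of $\E f_{b,r}$, and the increment $\E f_{b+1,r}-\E f_{b,r}=2p(1-p)^{b}\le 2p$ bounds the magnitude of $\E f_{b,r}$ at either side of the change. It then remains to convert these into bounds in $n$. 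From $n=br$ with $b\asymp 1/p$ and $r\asymp m$ we get $n\asymp m/p$, hence $p\asymp m/n$; together with $p\in[2^{-3m},2^{-m}]$ this forces $n\in[\Theta(m2^{m}),\Theta(m2^{3m})]$, whence $(1/4)\log_{2}n\le m\le\log_{2}n+O(1)$ once $n$ is large, i.e. $m=\Theta(\log n)$ and $p\asymp(\log n)/n$. Substituting back yields $f\in L_{+}^{>\Omega(\log n)}(\{-1,1\}^{n})$, $\max_{i}I_{i}(f)\le 2p=O((\log n)/n)$, and $|\E f|\le 2p=O((\log n)/n)$; letting $m$ range over $\N$ produces the family $f=f_{n}$ for infinitely many $n$.

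The step I expect to require care is insisting on the estimates $\max_{i}I_{i}(f)\le 2p$ and $|\E f|\le 2p$ with $p=\P[g=1]$, rather than the a priori weaker $2\cdot 2^{-m}$ and $2^{-m+1}$ literally written in \eqref{eight2}--\eqref{eight3}. Indeed $p$ may be as small as $2^{-3m}$, and then the $2^{-m}$-bounds give only $O(n^{-1/4})$-type estimates, far from $O((\log n)/n)$. Fortunately, the arguments proving \eqref{eight2} and \eqref{eight3} establish exactly the $p$-sharp forms: a coordinate inside a block can be pivotal only if $g$ evaluates to $1$ on that block for one of the two settings of the coordinate, contributing total probability at most $2p$; and the mean is squeezed between two consecutive values of $\E f_{b,r}$, which differ by $2p(1-p)^{b}\le 2p$. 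Everything else is routine bookkeeping.
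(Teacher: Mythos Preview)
Your proposal is correct and follows exactly the paper's approach, which the paper compresses into the single sentence preceding the theorem statement. Your explicit treatment of the gap between the literal $2^{-m}$ bounds in \eqref{eight2}--\eqref{eight3} and the $p$-sharp versions actually delivered by their proofs is a genuine detail the paper leaves implicit, and your resolution of it is right.
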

%note that g has 4m variables, this seems okay

We now wish to find similar functions that have zero mean.
\begin{corollary}
There exist a family of functions $g = g_n : \{-1,1\}^{2n} \to \{-1,0,1\}$ such that
\begin{itemize}
\item
 $g \in L^{> \Omega(\log n)}(\{-1,1\}^{2n})$.
\item
For all $i\in\{1,\ldots,n\}$, $I_i(g) \leq O(\log n / n)$.
\item
$\P[g =1] = 1/4 - O((\log n) /n)$, $\P[g =-1] = 1/4 - O((\log n) /n)$.
\end{itemize}
\end{corollary}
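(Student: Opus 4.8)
The plan is to build $g$ from the Boolean function $f = f_n$ supplied by Theorem~\ref{thm10} (itself assembled from Propositions~\ref{prop:general_tribes}, \ref{prop:general_tribes2}, and \ref{prop:tribek}) by an antisymmetrization over two independent copies of the coordinates. Concretely, I would set, for $x,y \in \{-1,1\}^n$,
\[
g_n(x,y) \colonequals \tfrac12\bigl(f_n(x) - f_n(y)\bigr).
\]
Since $f_n$ is $\{-1,1\}$-valued, $g_n$ equals $0$ when $f_n(x)=f_n(y)$, equals $1$ when $(f_n(x),f_n(y))=(1,-1)$, and equals $-1$ when $(f_n(x),f_n(y))=(-1,1)$; in particular it is $\{-1,0,1\}$-valued on $\{-1,1\}^{2n}$, as required.

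First I would check the mean and the point probabilities. Writing $p \colonequals \P[f_n=1] = (1+\E f_n)/2$, Theorem~\ref{thm10} gives $|\E f_n| = O((\log n)/n)$, hence $p = 1/2 + O((\log n)/n)$. Because the two blocks of coordinates are independent, $\P[g_n = 1] = p(1-p)$ and $\P[g_n=-1] = (1-p)p$, and both equal $1/4 - (p-1/2)^2 = 1/4 - O\bigl((\log n /n)^2\bigr) = 1/4 - O((\log n)/n)$. Also $\E g_n = \tfrac12(\E f_n - \E f_n) = 0$.

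Next I would verify the tail-space membership. Expanding $f_n(x) = \sum_{T\subseteq\{1,\dots,n\}}\widehat{f_n}(T)W_T$ and likewise $f_n(y)$ in the last $n$ variables, the Fourier expansion of $g_n$ on $\{-1,1\}^{2n}$ has $\widehat{g_n}(S) = \tfrac12\widehat{f_n}(S)$ when $S\subseteq\{1,\dots,n\}$ and $S\ne\emptyset$, $\widehat{g_n}(S) = -\tfrac12\widehat{f_n}(S-n)$ when $S\subseteq\{n+1,\dots,2n\}$ and $S\ne\emptyset$, $\widehat{g_n}(S)=0$ when $S$ meets both blocks, and $\widehat{g_n}(\emptyset) = \tfrac12\E f_n - \tfrac12\E f_n = 0$. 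Since $f_n\in L_+^{>k}$ with $k=\Omega(\log n)$, every coefficient $\widehat{f_n}(T)$ with $1\le|T|\le k$ vanishes, so $\widehat{g_n}(S)=0$ for all $S$ with $1\le|S|\le k$; combined with $\widehat{g_n}(\emptyset)=0$ this gives $g_n\in L^{>\Omega(\log n)}(\{-1,1\}^{2n})$.

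Finally, the influences: if $i$ lies in the first block, flipping $x_i$ changes $g_n(x,y)$ exactly when it changes $f_n(x)$ --- if $f_n(x)$ is unchanged so is $g_n$, while if $f_n(x)$ flips then $g_n$ changes, since $\tfrac12(f_n(x)-f_n(y))\ne\tfrac12(-f_n(x)-f_n(y))$ because $f_n(x)\ne 0$ --- so $I_i(g_n) = I_i(f_n) = O((\log n)/n)$ by Theorem~\ref{thm10}; the second block is symmetric. I do not expect a genuine obstacle here: the whole argument is elementary bookkeeping once Theorem~\ref{thm10} is in hand, and the only mildly delicate point is making sure the antisymmetric (rather than symmetric) combination is used, so that the constant Fourier coefficient cancels and $\E g_n = 0$ holds exactly.
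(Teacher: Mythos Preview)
Your proposal is correct and follows exactly the construction in the paper: the paper's entire proof is the single line $g(x_1,\ldots,x_n,y_1,\ldots,y_n)\colonequals\tfrac12(f(x_1,\ldots,x_n)-f(y_1,\ldots,y_n))$ with $f$ from Theorem~\ref{thm10}. Your verification of the three bullet points is more detailed than what the paper provides (which leaves them implicit), and all of it is accurate.
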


\begin{proof}
Let $f$ from Theorem \ref{thm10} and define
\[
g(x_1,\ldots,x_n,y_1,\ldots,y_n) \colonequals \frac{1}{2}(f(x_1,\ldots,x_n)-f(y_1,\ldots,y_n)).
\]
\end{proof}

With a little more work we can construct functions with the desired properties taking only values $0$ and $1$.  For this we note that Proposition~\ref{prop:tribek} implies the following:
\begin{corollary} \label{cor:tribek}
There exists a constant $\gamma > 1$ such that for every $n$, there
exists a function $g : \{-1,1\}^{\gamma n} \to \{0,1\}$ with $g \in L_{+}^{ > n}$
%*** CHANGED: positive to nonnegative
and $\P[g = 1] = 2^{-n-d}$ for some nonnegative integer $d$.
Moreover, $g$ has the following property:
For $y \in \{-1,1\}^{\gamma n}$, write $g_y(x) = g(y_1 x_1,\ldots, y_n x_n)$. Then for all
$y, y' \in \{-1,1\}^{ \gamma n}$ we either have $g_y = g_{y'}$ or the function $g_y g_{y'}$ is identically $0$.
\end{corollary}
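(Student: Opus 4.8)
The plan is to obtain $g$ by taking the $\{-1,1\}$-valued function supplied by Proposition~\ref{prop:tribek} and replacing it with its $0/1$ indicator. First I would apply Proposition~\ref{prop:tribek} with $m=n$: this produces a linear code $C$ and the function $g'\colonequals g_{C^{\perp}}$, with $g'\in L_{+}^{>n}$ and $\P[g'=1]\leq 2^{-n}$; after enlarging the constant $\gamma$ and padding $C$ with coordinates on which nothing depends, I may assume the block length is exactly $\gamma n$. Then I would set $g\colonequals(1+g')/2=\1_{\{g'=1\}}\colon\{-1,1\}^{\gamma n}\to\{0,1\}$. Since $\widehat g(S)=\tfrac12\widehat{g'}(S)$ for every nonempty $S$, we still have $\E gW_{S}=0$ for $1\leq\abs S\leq n$, so $g\in L_{+}^{>n}$. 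The set on which $g=1$ is
\[
S_{0}\colonequals\bigl\{x\in\{-1,1\}^{\gamma n}:\bigl((1-x_{1})/2,\ldots,(1-x_{\gamma n})/2\bigr)\in C^{\perp}\bigr\},
\]
which has exactly $\abs{C^{\perp}}=2^{\dim C^{\perp}}$ elements; hence $\P[g=1]=2^{\dim C^{\perp}-\gamma n}$ is a power of two, and being $\leq 2^{-n}$ it equals $2^{-n-d}$ for a nonnegative integer $d$.

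For the translation property (where $g_{y}(x)$ denotes the twist $x\mapsto g(y_{1}x_{1},\ldots,y_{\gamma n}x_{\gamma n})$), the key observation I would use is that $S_{0}$ is a \emph{subgroup} of $(\{-1,1\}^{\gamma n},\cdot)$: transporting the additive structure of $(\Z/2\Z)^{\gamma n}$ through the bijection $x_{i}\mapsto(1-x_{i})/2$ carries $S_{0}$ onto the linear subspace $C^{\perp}$, and $(1,\ldots,1)\in S_{0}$ since $0\in C^{\perp}$. Then, for any $y\in\{-1,1\}^{\gamma n}$ and any $x$,
\[
g_{y}(x)=\1_{\{\,y\cdot x\in S_{0}\,\}}=\1_{\{\,x\in y\cdot S_{0}\,\}}
\]
(using $y^{-1}=y$ in this group, where $y\cdot x$ is the coordinatewise product), so $g_{y}=\1_{y\cdot S_{0}}$ is the indicator of the coset $y\cdot S_{0}$. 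Since two cosets of a subgroup either coincide or are disjoint, for any $y,y'$ we obtain either $g_{y}=g_{y'}$ or $g_{y}g_{y'}\equiv 0$, which is precisely the claimed dichotomy.

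The points that require (only routine) care are: the reduction to block length exactly $\gamma n$ by padding with dummy coordinates, which changes neither $\P[g=1]$, nor membership in $L_{+}^{>n}$, nor the subgroup structure (it merely replaces $S_{0}$ by $S_{0}\times\{-1,1\}^{(\mathrm{extra})}$); the fact that $\P[g=1]$ is literally a power of two; and the observation that the affine substitution $g'\mapsto(1+g')/2$ preserves the vanishing of the low-degree Fourier coefficients. I do not expect a real obstacle here: beyond Proposition~\ref{prop:tribek}, the entire content is the single remark that the $1$-level set of the indicator of a linear code is a multiplicative subgroup of the discrete cube, which makes the coset dichotomy immediate.
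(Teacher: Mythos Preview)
Your proposal is correct and takes essentially the same approach as the paper: set $g=(1+h)/2$ where $h$ is the function from Proposition~\ref{prop:tribek}, observe that the affine shift preserves the vanishing of nonconstant low-degree coefficients, and deduce the translation dichotomy from the fact that the support of $g$ is (the $\pm1$ image of) a linear code, so its twists are cosets. The paper's own proof is just two sentences to this effect; your additional care about padding to block length exactly $\gamma n$ and about $\P[g=1]$ being a genuine power of two fills in details the paper leaves implicit.
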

\begin{proof}
%*** CHANGED Subscript notation from 1(h=1) to $1_{(h=1)}$
Let $h$ be the function from Proposition~\ref{prop:tribek} and let $g = 1_{(h=1)} = (h+1)/2$.
Then all the stated properties but the last one clearly hold if $\gamma$ is large enough. The last property follows from the fact that cosets of linear codes are either identical or disjoint.
\end{proof}

\begin{lemma}\label{lemma15}
The exists a constant $\gamma>1$, such that the following holds.
Let $0 \leq t < 2^{n}$, $t\in\Z$. Then there exists a function
$f : \{-1,1\}^{\gamma n} \to \{0,1\}$ such that $\E f = t/2^n$ and $f \in L_+^{> n}(\{-1,1\}^{\gamma n})$.
\end{lemma}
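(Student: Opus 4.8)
The plan is to realize the required expectation $t/2^{n}$ as a disjoint union of translates of the single small-support function supplied by Corollary~\ref{cor:tribek}, using the coset structure recorded in the ``moreover'' part of that corollary.

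First I would apply Corollary~\ref{cor:tribek} to the given $n$ to get a constant $\gamma>1$ and a function $g\colon\{-1,1\}^{\gamma n}\to\{0,1\}$ with $g\in L_+^{>n}$ and $\P[g=1]=2^{-n-d}$ for some integer $d\geq 0$. For $y\in\{-1,1\}^{\gamma n}$ put $g_{y}(x)\colonequals g(y_{1}x_{1},\ldots,y_{\gamma n}x_{\gamma n})$. Three facts drive the argument: (i) $\widehat{g_{y}}(S)=\widehat{g}(S)\prod_{i\in S}y_{i}$, so each $g_{y}$ again satisfies $g_{y}\in L_+^{>n}$ and $\E g_{y}=2^{-n-d}$; (ii) the support of $g$ is a linear subspace $V$ of $\{-1,1\}^{\gamma n}$ under coordinatewise multiplication (it is exactly the image of the relevant dual code under the group isomorphism $a\mapsto 1-2a$ from $\mathbb{F}_{2}^{\gamma n}$ onto $\{-1,1\}^{\gamma n}$), so the support of $g_{y}$ is the coset $yV$; (iii) by the last assertion of Corollary~\ref{cor:tribek} two such cosets are equal or disjoint, and since $|V|=2^{\gamma n}\P[g=1]=2^{\gamma n-n-d}$ there are exactly $2^{n+d}$ of them and they tile $\{-1,1\}^{\gamma n}$.

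Next I would choose one representative from each coset to obtain $\{0,1\}$-valued, pairwise disjointly supported functions $g^{(1)},\ldots,g^{(2^{n+d})}\in L_+^{>n}$, each with $\E g^{(i)}=2^{-n-d}$ and with $\sum_{i}g^{(i)}\equiv 1$. Since $0\leq t<2^{n}$ we have $0\leq t2^{d}\leq 2^{n+d}$, so $f\colonequals\sum_{i=1}^{t2^{d}}g^{(i)}$ is well defined; it is $\{0,1\}$-valued because the $g^{(i)}$ have disjoint supports, it lies in $L_+^{>n}$ because the Fourier-vanishing conditions defining that space are preserved under addition, and $\E f=t2^{d}\cdot 2^{-n-d}=t/2^{n}$, which is what is wanted.

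The one step meriting genuine care --- the ``main obstacle'', such as it is --- is point (ii): checking that the support of $g$ really is a subspace, equivalently that the translates $g_{y}$ partition the cube rather than overlap arbitrarily. This reduces to unwinding the definition $g=1_{(h=1)}$ with $h$ the indicator of a linear code and observing that coordinatewise multiplication on $\{-1,1\}^{\gamma n}$ matches $\mathbb{F}_{2}$-addition on $\{0,1\}^{\gamma n}$; everything else is bookkeeping. As in Corollary~\ref{cor:tribek}, we read $L_+^{>n}$ here as the space of functions whose degree-$1$ through degree-$n$ Fourier coefficients vanish, so that the $\{0,1\}$-valued sum $f$ legitimately belongs to it.
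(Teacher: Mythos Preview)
Your proof is correct and follows exactly the approach of the paper: invoke Corollary~\ref{cor:tribek} to obtain the basic building block $g$ with $\E g=2^{-n-d}$, then sum $t\cdot 2^{d}$ pairwise-disjoint translates $g_{y^{i}}$ to hit the target expectation $t/2^{n}$. The paper's proof is a two-line sketch of precisely this; you have simply filled in the details (the Fourier-shift identity $\widehat{g_{y}}(S)=\widehat{g}(S)\prod_{i\in S}y_{i}$, the coset count $2^{n+d}$, and the observation that $t2^{d}\leq 2^{n+d}$), and your remark on reading $L_{+}^{>n}$ for $\{0,1\}$-valued functions matches the paper's implicit convention.
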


\begin{proof}
%*** ADDED: $d$ is a nonnegative integer
From Corollary~\ref{cor:tribek} in the case $t=1$ we can find a function in $L_+^{>n}$ and
$\E[f] = 2^{-n-d}$, where $d$ is a nonnegative integer. The general case follows by taking $h = \sum_i g_{y^i}$ where $y^i$ are chosen so that $g_{y^i} g_{y^j} = 0$ for $i \neq j$.
\end{proof}

\begin{theorem}
There exist a family of Boolean functions $G = G_n : \{-1,1\}^n \to \{-1,1\}$ such that
\begin{itemize}
\item
 $G \in L^{> \Omega(\log n)}$
\item
For all $i\in\{1,\ldots,n\}$, $I_i(G) \leq O((\log n) / n)$.
\end{itemize}
\end{theorem}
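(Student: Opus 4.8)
The plan is to split the argument into two stages: first construct a $\{-1,0,1\}$-valued function that already has mean zero together with all the structural features we want \emph{and} an extremely small zero-set, and then turn its zeros into genuine $\pm1$ values using Lemma~\ref{lemma15}. The reason for this detour is that an exact mean-zero constraint is very rigid in combination with being Boolean, low-influence and lying in a tail space: one cannot, for instance, repair the bias of a Coding Tribes function by taking $\vee$ or $\wedge$ with an auxiliary Boolean function, because that would force the auxiliary function to have a non-dyadic acceptance probability. Passing through a $\{-1,0,1\}$-valued function evades this, since the antisymmetric difference of a Coding Tribes function with an independent copy of itself automatically has mean zero.

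\emph{Step 1.} For an admissible size $m$ let $f_m\colon\{-1,1\}^m\to\{-1,1\}$ be the Coding Tribes function of Theorem~\ref{thm10}, so $f_m\in L_+^{>\Omega(\log m)}$, $\max_i I_i(f_m)=O((\log m)/m)$, $|\E f_m|=O((\log m)/m)$, and on a block $w=(x,y)$ of $2m$ variables set $D_m(w)=\tfrac12(f_m(x)-f_m(y))\in\{-1,0,1\}$. Then $\E D_m=0$ by antisymmetry, $D_m\in L^{>\Omega(\log m)}$ (its Fourier support sits inside that of $f_m$), $\max_i I_i(D_m)=O((\log m)/m)$, but $\P(D_m=0)=\tfrac12(1+(\E f_m)^2)=\tfrac12+o(1)$ is far too large. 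To shrink the zero-set I would fix $n_0\to\infty$ admissible, put $t=\lceil\log_2 n_0\rceil$, pick admissible block sizes $m_1,\dots,m_t$ with $m_l\asymp\max(2^{-l}n_0,\,n_0^{1/2})$ (the Coding Tribes construction is flexible enough in the code dimension and the number of tribes to realize sizes within a bounded factor of any target, which is all that is needed), so that $\sum_l m_l=O(n_0)$ and $m_l=n_0^{\Omega(1)}$ for all $l\le t$, and then, on disjoint blocks $w^{(1)},\dots,w^{(t)}$ with $w^{(l)}$ carrying $2m_l$ variables, define
\[
g\big(w^{(1)},\dots,w^{(t)}\big):=\sum_{l=1}^{t}\Big(\prod_{l'<l}\mathbf 1_{\{D_{m_{l'}}(w^{(l')})=0\}}\Big)\,D_{m_l}\big(w^{(l)}\big),
\]
i.e.\ $g=D_{m_{l^*}}(w^{(l^*)})$ with $l^*=\min\{l:D_{m_l}(w^{(l)})\neq0\}$ (and $g=0$ if no such $l^*$). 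This $g$ is $\{-1,0,1\}$-valued; $\E g=0$ because in each summand the factor $\prod_{l'<l}\mathbf 1_{\{\cdots\}}$ and the mean-zero factor $D_{m_l}(w^{(l)})$ depend on disjoint blocks; writing $\mathbf 1_{\{D=0\}}=1-D^2$ and using that a mean-zero function supported on Fourier levels $>k$ stays supported on levels $>k$ after multiplication by any function of \emph{disjoint} variables, the $l$-th summand lies in $L^{>\Omega(\log m_l)}$, so $g\in L^{>\Omega(\log n_0)}$; and $\P(g=0)=\prod_l\P(D_{m_l}=0)=(\tfrac12+o(1))^{t}=O(1/n_0)$. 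Flipping a coordinate inside block $w^{(l)}$ changes $g$ only if $D_{m_{l'}}(w^{(l')})=0$ for all $l'<l$ and, moreover, either $D_{m_l}(w^{(l)})$ or $\mathbf 1_{\{D_{m_l}(w^{(l)})=0\}}$ changes; by independence this has probability at most $(\tfrac12+o(1))^{l-1}\cdot O((\log m_l)/m_l)$, which by the choice $m_l\asymp\max(2^{-l}n_0,n_0^{1/2})$ is $O((\log n_0)/n_0)$ uniformly in $l$. Setting $N'=\sum_l2m_l=\Theta(n_0)$, we obtain $g\colon\{-1,1\}^{N'}\to\{-1,0,1\}$ that is mean zero, lies in $L^{>\Omega(\log N')}$, has $\max_i I_i(g)=O((\log N')/N')$, and has $\P(g=0)=O((\log N')/N')$.

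\emph{Step 2.} Apply Lemma~\ref{lemma15} with the integer parameter $2^{M_0-1}$ (in the role of its ``$t$'') and $M_0=\Theta(\log N')$ (in the role of its ``$n$''): this gives $h\colon\{-1,1\}^{M}\to\{0,1\}$ with $M=\gamma M_0=O(\log N')$, $\E h=\tfrac12$, $h\in L_+^{>M_0}$, so $2h-1$ is $\{-1,1\}$-valued, has mean $0$, and is supported on Fourier levels $>M_0$. Put $n=N'+M=\Theta(n_0)$ and, on disjoint variable sets $w\in\{-1,1\}^{N'}$, $z\in\{-1,1\}^{M}$, define
\[
G(w,z):=g(w)+\mathbf 1_{\{g(w)=0\}}\cdot\big(2h(z)-1\big).
\]
Off $\{g=0\}$ we have $G=g\in\{-1,1\}$ and on $\{g=0\}$ we have $G=2h-1\in\{-1,1\}$, so $G$ is Boolean; $\E G=\E g+\P(g=0)\cdot\E(2h-1)=0$; writing $\mathbf 1_{\{g=0\}}=1-g^2$ gives $G=g+(2h-1)-g^2(2h-1)$, and since $2h-1$ is supported on $z$-levels $>M_0$ so is $g^2(2h-1)$, whence $G\in L^{>\min(\Omega(\log N'),M_0)}=L^{>\Omega(\log n)}$. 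For the influences, flipping a $w$-coordinate leaves $h(z)$ fixed, so $I_i(G)\le I_i(g)+\P(\mathbf 1_{\{g(w)=0\}}\text{ changes})\le I_i(g)+2\P(g=0)=O((\log n)/n)$, while flipping a $z$-coordinate leaves $g(w)$ and $\mathbf 1_{\{g(w)=0\}}$ fixed, so $G$ can change only when $g(w)=0$ and hence $I_i(G)=\P(g=0)\cdot I_i(h)\le\P(g=0)=O((\log n)/n)$. Since there are infinitely many admissible $n_0$, this produces the desired $G_n$ for infinitely many $n$.

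The hard part is Step~1: driving the zero-probability of a mean-zero $\{-1,0,1\}$-valued function down to $O((\log n)/n)$ requires iterating the antisymmetric difference $\Theta(\log n_0)$ times, and the delicate point is that the iteration must use \emph{geometrically decaying} block sizes $m_l\asymp2^{-l}n_0$ (truncated at $n_0^{1/2}$ so that every block still supports a Fourier tail of level $\Omega(\log n_0)$), because only then does the $2^{-(l-1)}$ damping of the $l$-th block's influence contribution offset the $2^{l}$-fold shrinkage of $m_l$ relative to $n_0$, all while keeping $\sum_l m_l=O(n_0)$. Once Step~1 is in hand, Step~2 is essentially automatic: because $\P(g=0)$ is already $O((\log n)/n)$, the rounding function $h$ is allowed to have influences as large as $\Omega(1)$ — as the balanced function from Lemma~\ref{lemma15} does — and this is exactly the slack that makes $\E G=0$ attainable on the nose without disturbing the tail-space or low-influence properties.
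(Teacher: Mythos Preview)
Your argument is correct, but the paper's route is considerably shorter and bypasses the whole iterated-blocks construction of Step~1. The paper builds a single Coding Tribes function $f(x)=g_0(x^{(0)})\vee g_1(x^{(1)})\vee\cdots\vee g_b(x^{(b)})$ with all $g_i\in L_+^{>m}$ obtained from Lemma~\ref{lemma15}, taking $\P[g_i=1]=2^{-m}$ for $i\ge1$ but $\P[g_0=1]=4\cdot2^{-m}$. The point is that the distinguished block $g_0$ already furnishes a low-influence, tail-space ``correction region'': since $g_0=1$ forces $f=1$, the set $\{g_0=1\}$ has probability $\Theta((\log n)/n)$ and $f$ is constant on it. They then bring in fresh variables $y$, pick $h\colon\{-1,1\}^{\gamma n}\to\{0,1\}$ in $L_+^{>n}$ via Lemma~\ref{lemma15} with $2\E h=\E f/\P[g_0=1]$, and set $G(x,y)=f(x)-2\cdot\mathbf 1_{\{g_0=1\}}(x)\cdot h(y)$. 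This is Boolean (equal to $f$ off $\{g_0=1\}$ and to $1-2h$ on it), has mean zero by the choice of $\E h$, lies in $L^{>m}$ because each factor does, and all $y$-influences are damped by $\P[g_0=1]=O((\log n)/n)$. So where you spend Step~1 manufacturing a tiny zero-set through $\Theta(\log n)$ antisymmetric differences on geometrically shrinking blocks, the paper simply \emph{reuses one of the tribes} as that tiny set---it is automatically low-influence and in $L_+^{>m}$ by construction. Your remark that one ``cannot repair the bias by taking $\vee$ or $\wedge$'' is literally true, but the paper's subtraction trick shows a direct repair is available, so the detour through $\{-1,0,1\}$ is not forced. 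That said, your approach is perfectly valid, more modular (the mean-zero $\{-1,0,1\}$ stage and the rounding stage are decoupled), and the geometric block-size balancing in Step~1 is a nice device in its own right.
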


\begin{proof}
We revise the construction of Theorem \ref{thm10} as follows. Using Lemma \ref{lemma15}, choose $g_0,\ldots,g_b\colon\{-1,1\}^{\lceil \gamma m \rceil}\to\{-1,1\}$ all in $L_+^{>m}$.
Moreover, for $1 \leq i \leq b$, let $\P[g_i = 1] = 2^{-m}$ and for $i = 0$, let
$\P[g_0  = 1] = 4 \times 2^{-m}$.

We choose $b$ to be the largest integer so that
\[
\E f =(1 - (1-2^{-m})^b(1-2^{-m+2}))-1 > 0.
\]
and let $n = (b+1) \lceil \gamma m \rceil$.
Note that $m = O(\log n)$ and that
\[
0\leq\E f \leq 2^{-m}.
\]
%Also, there exist $d,r\in\N$ with $r\leq 4m$ such that
%\begin{equation}\label{seven1}
%\E f=d2^{-n},\qquad \E g_{0}=2^{-r}
%\end{equation}
%  Let $x_1,\ldots,x_n$ denote the coordinates of $f$.
% and let $g$ be a function of $x_1,\ldots,x_n,y_1,\ldots,y_{\gamma n}$.
%recall that m\leq r, rb=n
By Lemma \ref{lemma15}, let $h\colon\{-1,1\}^{\lceil \gamma n \rceil}\to\{0,1\}$ with
\begin{equation}\label{seven2}
2 \E h=\E f / \P[g_0 = 1]
\end{equation}
 and such that $h$ is in $L_{+}^{>n}(\{-1,1\}^{\gamma n})$.
 %Note that any $t\in\N$, $h$ exists such that $\E h=t2^{-n}$ by Lemma \ref{lemma15}.
%  So, from \eqref{seven1}, we have $(\E g_{0})^{-1}\E f=d2^{-n+r}$, so we can choose $t\colonequals d2^{r}$ so that $\E h=d2^{r-n}=(\E g_{0})^{-1}\E f$.  That is, \eqref{seven2} holds.

Let $G\colon\{-1,1\}^{\gamma n + n}\to\{-1,1\}$ be a function of the $x$ and $y$ given by:
%then h can be any multiple of 2^-n.  is this allowed?  what is m?
\[
G(x,y) \colonequals f(x) - 2\cdot g_{0}(x)\cdot h(y)
\]
Then clearly $G(x,y) \in L_{+}^{m}(\{-1,1\}^{\lceil \gamma n \rceil + n})$ and moreover $\E g = 0$ by \eqref{seven2}.  So we have $G\in L^{m}(\{-1,1\}^{\lceil \gamma n \rceil + n})$.    Finally, since $f(x)$ and $g_0(x)$ have all of their influences $O((\log n)/n)$ the same is true for all of
the $x$ variables in $g$. Moreover, a $y$ variables can be influential if and only if $g_0(x) = 1$. Therefore the influence of all of the $y$ variables is also $O((\log n) / n)$.
The proof follows.
\end{proof}

%\subsection{Infinite Spaces}\label{secinf}

%The proof of Theorem \ref{main} used the finiteness of the space $\Omega$ in a nonessential way.  Infinite spaces require a bit more care, so we have chosen the above presentation.  The only place where finiteness was used was in \eqref{ten10}. A suitable replacement for this inequality appears in \cite[Eq. (1.6)]{grigoryan12} or \cite[Eq. (2.7)]{ledoux99}, where it is shown that for all $t>0$, there exist functions $p_{t}\colon\Omega\times\Omega\to[0,\infty]$ such that, for any smooth functions $f,g\colon\Omega\to\R$, we have

%\begin{equation}\label{zero3}
%\E fLg=\lim_{t\to0^{+}}\frac{1}{2t}\int_{\Omega}\int_{\Omega}(f(x)-f(y))(g(x)-g(y))p_{t}(x,y)d\mu(x)d\mu(y).
%\end{equation}

%\medskip

\noindent{\textbf{Acknowledgement.}}
Thanks for Fedor Nazarov for sharing his complex interpolation arguments of the main theorem which strongly inspired the proof of Section \ref{secnaz}.  Thanks to Ryan O'Donnell for helpful discussions and references, particularly \cite{talagrand89} and \cite{janson97}.  Thanks also to Michel Ledoux, Camil Muscalu, Assaf Naor, and Bob Strichartz for helpful discussions.

%    Bibliographies can be prepared with BibTeX using amsplain,
%    amsalpha, or (for "historical" overviews) natbib style.
\bibliographystyle{amsplain}
%    Insert the bibliography data here.
\bibliography{12162011}

\end{document}